\documentclass[12pt,a4paper,reqno]{amsart}
\usepackage[top=35mm, bottom=35mm, left=30mm, right=30mm]{geometry}
\usepackage{mathptmx}
\usepackage{mathrsfs}
\usepackage{amscd}
\usepackage{verbatim}
\usepackage{url}
\usepackage[all]{xy}
\usepackage{color}
\usepackage[colorlinks=true,citecolor=blue, backref]{hyperref}%
\usepackage{amsmath}
\usepackage{amssymb}
\usepackage{tikz}
\usepackage{tikz-cd}
\usetikzlibrary{arrows}
\usetikzlibrary{patterns}
\usetikzlibrary{lindenmayersystems,backgrounds}

\theoremstyle{plain}

\newtheorem{thm}{Theorem}[section]
\newtheorem{lem}[thm]{Lemma}
\newtheorem{prop}[thm]{Proposition}
\newtheorem{cor}[thm]{Corollary}

\theoremstyle{definition}
\newtheorem{defn}[thm]{Definition}
\newtheorem{rem}[thm]{Remark}
\newtheorem{ques}{Question}[section]

\numberwithin{equation}{section}

\newcommand{\Z}{{\mathbb{Z}}}

\newcommand{\ep}{\varepsilon}
\newcommand{\ra}{\rightarrow}

\DeclareMathOperator{\diam}{diam}

\def \id {{\rm id}}

\begin{document}
\title[Quasi-disjointness in topological dynamics]{Quasi-disjointness in topological dynamics}

\author[H.~Xu]{Hui Xu}
\address[H.~Xu]{Department of Mathematics, Shanghai Normal University, Shanghai, 200234, CHINA}
\email{huixu@shnu.edu.cn}

\author[X.~Ye]{Xiangdong Ye}
\address[X.~Ye]{School of Mathematical Sciences, University of Science and Technology of China, Hefei, Anhui 230026, China}
\email{yexd@ustc.edu.cn}

\begin{abstract}

Motivated by Berg's notion of quasi-disjointness for ergodic systems, we introduce and investigate the concept of quasi-disjointness for minimal systems. Several equivalent characterizations are provided. We prove that quasi-disjointness is preserved under taking factors, proximal extensions, and group extensions. As a consequence, we establish that every minimal {\bf PI} system  is quasi-disjoint from all minimal systems. In addition, some variant of quasi-disjointness, namely strong quasi-disjointness is also introduced and examined. Particularly, we prove that each {\bf AI}
system is strongly quasi-disjoint from all minimal systems.
\end{abstract}

\keywords{Quasi-disjointness, strong quasi-disjointness, minimal systems}
\subjclass[2010]{37B20}
\maketitle

\section{Introduction}
{\it A topological dynamical system} (or simply a system) is  a pair $(X,T)$, where $X$ is a compact metric space and $T$ is a homeomorphism on $X$. Given two systems $(X,T)$ and $(Y,S)$, a {\it joining} of $(X,T)$ and $(Y,S)$ is an invariant closed subset of $X\times Y$ that has full projections onto both $X$ and $Y$. If $X\times Y$ is the only joining of $(X,T)$ and $(Y,S)$, then we say that they are {\it disjoint}, denoted by $X\perp Y$.

\medskip

The notion of {\it disjointness} was introduced simultaneously in ergodic theory and topological dynamics by Furstenberg in his seminal paper \cite{Fur67}. In a sense, disjointness captures a certain form of independence between two systems. In \cite{Fur67}, two main open questions regarding disjointness were raised: one asks for a characterization of systems that are disjoint from all minimal systems, and the other for a characterization of those that are disjoint from all distal systems. The second question was resolved by Peterson \cite{Pet70}, who showed that the class of systems disjoint from all distal systems consists precisely of the minimal weakly mixing systems. Research on the first question has since attracted considerable effort; see for example, \cite{HY05, HSY20, Opr19, GTWZ21, XY} and the references therein. Recently, G\'{o}rska, Lema\'{n}czyk and de la Rue gave a characterization of measure preserving systems that are disjoint from all ergodic systems \cite{GLR24}, a result later reproved in a shorter way by Glasner and Weiss \cite{GW24}. In the work \cite{HSXY}, the authors provide several intrinsic characterizations of topological dynamical systems that are disjoint from all minimal systems.

\medskip

Perhaps motivated by Furstenberg's second question,  Berg introduced the notion of quasi-disjointness among ergodic systems in \cite{Berg71} and further properties of quasi-disjointness are obtained in \cite{Berg72}. Recently, Moreira, Richter and Robertson generalize Berg's definition of quasi-disjointness to ergodic systems acted by countable groups \cite{MRR23}. In this paper, we study quasi-disjointness in topological dynamical systems.

\medskip

For a minimal system $(X,T)$, let $X_{eq}$ denote its maximal equicontinuous factor. For two minimal systems $(X,T)$ and $(Y,S)$, let $Eq(X,Y)$ denote the common maximal equicontinuous factor of $(X,T)$ and $(Y,S)$ (see Definition \ref{def-MCEQ}).  Let $\pi_{X}: X\rightarrow X_{eq}$ and $\pi_{Y}: Y\rightarrow Y_{eq}$ be the factor maps. We say  that two minimal systems $(X,T)$ and $(Y,S)$ are {\it quasi-disjoint} if $X\times Y$ is the only joining $J$ of $X$ and $Y$ satisfying $\pi_{X}\times \pi_{Y}(J)=X_{eq}\times Y_{eq}$, denoted by $X\perp_{Q} Y$. This definition is motivated by notion given in \cite{MRR23} for measure preserving systems.  

A related notion of 
quasi-disjointness is a counterpart of Berg's notion. Let $\alpha: X\rightarrow Eq(X,Y), \beta: Y\rightarrow Eq(X,Y)$ the factor maps and $\gamma: X\times Y\rightarrow Eq(X,Y), (x,y)\mapsto \alpha(x)-\beta(y)$. In \cite[Example 2]{Berg71}, Berg suggested that $(X,T)$ and $(Y,S)$ are quasi-disjoint if there is a residual set $\Omega\subset Eq(X,Y)$ such that $\gamma^{-1}(z)$ is a minimal subset for each $z\in \Omega$. In this case, we say that $X$ is {\it strongly quasi-disjoint} from $Y$, denoted by $X\perp_{SQ} Y$. We will see that strong quasi-disjointness implies quasi-disjointness. For an equivalent characterization, we have the following result.

\begin{thm}\label{main1}
Let $(X,T)$ and $(Y,S)$ be minimal systems. Let $\alpha: X\rightarrow Eq(X,Y), \beta: Y\rightarrow Eq(X,Y)$ be the factor maps and $\gamma(x,y)=\alpha(x)-\beta(y)$. Then the following assertions are equivalent:
\begin{enumerate}
\item [(1)] There is some point $z\in Eq(X,Y)$ such that $\gamma^{-1}(z)$ has a unique minimal subset.
\item[(2)] There is a dense $G_{\delta}$ subset $\Omega\subset Eq(X,Y)$ such that for each $z\in \Omega$,   $\gamma^{-1}(z)$ has a unique minimal subset.
\item[(3)] $X\perp_{Q} Y$.
\end{enumerate}
\end{thm}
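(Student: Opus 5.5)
The plan rests on one observation. If $Z=Eq(X,Y)$ carries the rotation by $a$, then $\gamma(Tx,Sy)=\alpha(x)+a-\beta(y)-a=\gamma(x,y)$. So $\gamma$ is $T\times S$-invariant, and every fibre $\gamma^{-1}(z)$ is a closed $T\times S$-invariant set. Every minimal subset of $X\times Y$ therefore lies in a single fibre.

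Moreover $T\times \id$ and $\id\times S^{-1}$ commute with $T\times S$ and carry $\gamma^{-1}(z)$ homeomorphically onto $\gamma^{-1}(z+a)$. Hence whether a fibre has a unique minimal subset depends only on the $a$-orbit of $z$.

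Next I will translate the condition in the definition of $\perp_Q$. The orbit closures of $T_{eq}\times S_{eq}$ on the group $X_{eq}\times Y_{eq}$ are the cosets of the closed subgroup generated by the rotation element, and the space of these cosets is exactly $Z$ via $(u,v)\mapsto p(u)-q(v)$, using Definition \ref{def-MCEQ}. For a joining $J$, the condition $\pi_X\times\pi_Y(J)=X_{eq}\times Y_{eq}$ is thus equivalent to $\gamma(J)=Z$, i.e.\ $J$ meets every fibre.

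\emph{(1)$\Rightarrow$(3).} Let $M$ be the unique minimal subset of $\gamma^{-1}(z_0)$. Then $(T^n\times\id)M$ is the unique minimal subset of $\gamma^{-1}(z_0+na)$. A joining $J$ with $\gamma(J)=Z$ meets each of these fibres in a nonempty closed invariant set, so $J$ contains every $(T^n\times\id)M$.

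The closure $W$ of $\bigcup_n (T^n\times\id)M$ is invariant under $T\times\id$ and under $T\times S$, hence under $\id\times S$. The $\Z^2$-action generated by $T\times\id$ and $\id\times S$ is minimal on $X\times Y$, since the orbit of $(x,y)$ is $O(x)\times O(y)$. So $W=X\times Y$, and therefore $J=X\times Y$. The implication (2)$\Rightarrow$(1) is trivial.

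\emph{(3)$\Rightarrow$(2).} This is the main obstacle. I would use a semicontinuity argument. The map $z\mapsto\gamma^{-1}(z)$ is upper semicontinuous into $2^{X\times Y}$. I would also consider the map $\Phi(z)=\overline{\bigcup\{\text{minimal subsets of }\gamma^{-1}(z)\}}$, or alternatively the section map $z\mapsto K_M(z)$ of the closure $K_M$ in $Z\times X\times Y$ of $\{(z_0+na,p):p\in (T^n\times\id)M\}$, for a fixed minimal set $M$. By the Fort/Kuratowski theorem, such a map has a dense $G_\delta$ set $\Omega$ of continuity points, and $\Omega$ can be taken invariant under $z\mapsto z+a$.

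I would argue by contradiction. Suppose $z\in\Omega$ and $\gamma^{-1}(z)$ contains two distinct minimal sets $M_1\neq M_2$. Transport $M_1$ along the $a$-orbit by $T^n\times\id$, and let $J$ be the closure of the union of these translates together with the limiting sections at the remaining points of $Z$.

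This $J$ is $T\times S$-invariant, has full projections (each $M_i$ projects onto $X$ and onto $Y$), and satisfies $\gamma(J)=Z$. By (3), $J=X\times Y$. The delicate point is to use continuity at $z$ to show that $J\cap\gamma^{-1}(z)$ is then forced to equal $M_1$ up to limit contributions which, by semicontinuity, vanish at continuity points. This would give $M_2\subset M_1$, a contradiction.

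If this direct approach stalls, I would first prove (3)$\Rightarrow$(1), choosing minimal sets fibrewise via a continuity point. Then I would upgrade (1)$\Rightarrow$(2) by showing that the set of $z$ with a unique minimal subset is a $G_\delta$, using the upper semicontinuity above. This set is already dense, since it contains the $a$-orbit of $z_0$.
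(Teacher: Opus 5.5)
Your easy directions are correct and agree with the paper. The fibres of $\gamma$ are $T\times S$-invariant, $T\times\id$ carries $\gamma^{-1}(z)$ onto $\gamma^{-1}(z+a)$, and a joining projects onto $X_{eq}\times Y_{eq}$ iff $\gamma(J)=Eq(X,Y)$. Your (1)$\Rightarrow$(3) is the paper's (2)$\Rightarrow$(3), which in fact only uses the orbit of $z_0$. The $G_\delta$ property is the compactness argument of Lemma \ref{1 min set}.

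The gap is (3)$\Rightarrow$(1), and the continuity-point mechanism you propose cannot supply it. First, your $K_M$ is degenerate. By your own $\Z^2$-minimality argument, $\bigcup_n(T^n\times\id)M$ is dense in $X\times Y$ for \emph{every} minimal $M$. Since also $K_M\subset\{(z,p):\gamma(p)=z\}$, you get $K_M(z)=\gamma^{-1}(z)$ for all $z$. So ``by (3), $J=X\times Y$'' holds without using (3), and the section at $z$ already contains $M_2$; nothing forces $M_2\subset M_1$.

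Worse, continuity points cannot detect uniqueness at all. Let $X=Y$ be the skew product $(x,y)\mapsto(x+\alpha,y+x)$ on $\mathbb{T}^2$, so that $\gamma((x,y),(x',y'))=x-x'$.
\begin{itemize}
\item This $\gamma$ is open, so every $z$ is a continuity point of $z\mapsto\gamma^{-1}(z)$. The same holds for your $\Phi$: $X\times X$ is distal, so $\Phi(z)=\gamma^{-1}(z)$.
\item The fibre over any $s$ rationally independent of $1,\alpha$ is minimal, so $X\perp_Q X$ by your (1)$\Rightarrow$(3).
\item Yet $\gamma^{-1}(0)={\bf RP}(X)$ contains the distinct minimal sets $\{((x,y),(x,y+t))\}$, $t\in\mathbb{T}$.
\end{itemize}
Your fallback, ``choose minimal sets fibrewise via a continuity point'', is not specified and runs into the same obstruction.

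The missing idea is the paper's contrapositive construction (Lemma \ref{2 min sets implies joining}). Assume every fibre has at least two minimal subsets. You must produce a \emph{proper} closed invariant $E$ with $\gamma(E)=Eq(X,Y)$. The difficulty is that an arbitrary fibrewise choice of minimal sets may have dense union, so you need uniformity, and it comes from Baire category rather than continuity.
\begin{itemize}
\item Each fibre admits an $\epsilon>0$ such that every point of it has a minimal subset of the fibre outside its $\epsilon$-ball (Lemma \ref{2 min sets}).
\item The sets $\Omega_\epsilon$ of such $z$ cover $Eq(X,Y)$ and satisfy $\overline{\Omega_\epsilon}\subset\Omega_{\epsilon/2}$. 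Hence some $\Omega_{1/2k}$ contains a nonempty open set $V$.
\item Pick a ball $B$ of radius $<1/4k$ with $\gamma(B)\subset V$. Using semiopenness of $\gamma$ (Lemma \ref{semiopen}), let $U$ be the nonempty interior of $\gamma(B)$ and set $W=\gamma^{-1}(U)\cap B\neq\emptyset$.
\item Each fibre over $U$ contains a minimal set $M_z$ missing $W$.
\item Then $E=\overline{\bigcup_{z\in U}M_z\cup\gamma^{-1}(Eq(X,Y)\setminus U)}$ misses the open set $W$, is invariant, and meets every fibre.
\end{itemize}
So $E$ is a proper joining projecting onto $X_{eq}\times Y_{eq}$, contradicting (3).
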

Similar characterization of strong quasi-disjointness is given in Theorem \ref{char of SQ}, i.e. for minimal systems $(X,T)$ and $(Y,S)$, $X\perp_{SQ} Y$ if and only if there is some point $z\in Eq(X,Y)$ such that $\gamma^{-1}(z)$ is a minimal subset.

\medskip
We will show that (strong) quasi-disjointness is preserved by taking factors.
\begin{thm}\label{main2}
Let $(X,T), (Y,S)$ be minimal systems and $\pi: (X,T)\rightarrow (Z,R)$ be a factor map.
\begin{itemize}
\item[(1)] If $X\perp_{Q} Y$ then $Z\perp_{Q} Y$;
\item[(2)] If $X\perp_{SQ} Y$ then $Z\perp_{SQ} Y$.
\end{itemize}
\end{thm}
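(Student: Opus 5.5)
Both parts go through the fibre characterizations: Theorem \ref{main1} for $\perp_Q$ (unique minimal subset in some fibre $\gamma^{-1}(z)$) and Theorem \ref{char of SQ} for $\perp_{SQ}$ (some fibre $\gamma^{-1}(z)$ is itself minimal). The idea is to push the fibres of $X\times Y$ forward under $\pi\times\id$ onto fibres of $Z\times Y$.

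\emph{Setup.} Write $E=Eq(X,Y)$ and $E'=Eq(Z,Y)$. Every equicontinuous factor of $Z$ is one of $X$, so $E'$ is a common equicontinuous factor of $X$ and $Y$. By maximality of $E$ there is a factor map $p:E\to E'$, which is a homomorphism of compact monothetic groups after normalizing base points. We arrange that $\alpha'\circ\pi=p\circ\alpha$ and $\beta'=p\circ\beta$, where $\alpha':Z\to E'$ and $\beta':Y\to E'$ are the factor maps and $\gamma'(u,y)=\alpha'(u)-\beta'(y)$. It follows that
\[
\gamma'\circ(\pi\times\id)=p\circ\gamma ,
\]
and hence, for $w\in E'$,
\[
\gamma'^{-1}(w)=(\pi\times\id)\bigl(\gamma^{-1}(p^{-1}(w))\bigr).
\]
The one point to verify is this compatibility of the factor maps. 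It holds because factor maps between minimal equicontinuous systems are affine group maps, so we can fix base points compatibly.

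\emph{Part (2).} Assume $X\perp_{SQ}Y$. By Theorem \ref{char of SQ} together with a Baire/residual argument, each $\gamma^{-1}(z)$ with $z$ in a dense $G_\delta$ set is minimal. The obstacle is that $\gamma'^{-1}(w)$ is the image of the union of the fibres over the coset $p^{-1}(w)=z+\ker p$, not of a single fibre. The plan is to show these fibres are transitive to one another. Pick $(x,y)\in\gamma^{-1}(z)$ and $k\in\ker p$. By minimality of $X$ choose $n_i$ with $T^{n_i}x\to x'$, where $\alpha(x')=\alpha(x)+k$. Since $T^{n_i}$ acts on $E$ by a rotation $n_ie$, passing to a subsequence gives $n_ie\to k$. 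Choose further $S^{n_i}y\to y'$ along a subsequence; then $\beta(y')=\beta(y)+k$. Hence $(x',y')\in\overline{\mathcal O}(x,y)\cap\gamma^{-1}(z)$, the orbit closure of $(x,y)$ meets every fibre over the coset, and it equals $\gamma^{-1}(z+\overline{\{n e : n\in\Z\}}\cap\ldots)$. More cleanly, I would argue: $(\pi\times\id)(\gamma^{-1}(z))$ is minimal and lies in $\gamma'^{-1}(p(z))$. Any point of $\gamma'^{-1}(p(z))$ lifts to some $(x,y)$ with $\gamma(x,y)=z+k$. Choose $u\in X$ with $\alpha(u)=\alpha(x)-k$ and $\pi(u)$ close to $\pi(x)$. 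This is the hard step, since it needs the fibre of $\alpha'$ to be hit by the $\ker p$-translates. I expect to handle it using the fact that the minimal set $(\pi\times\id)(\gamma^{-1}(z))$ is invariant under the lifted action of $E'$-preserving rotations. Concretely, I would prove that $M=(\pi\times\id)(\gamma^{-1}(z))$ has $\gamma'(M)=\{p(z)\}$, is closed and invariant, and that the set $\{z: (\pi\times\id)\gamma^{-1}(z)=\gamma'^{-1}(p(z))\}$ is nonempty. The last claim uses Theorem \ref{char of SQ} applied to $Z$: it suffices to find one point $w$ whose $\gamma'$-fibre is minimal.

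\emph{Part (1).} For $\perp_Q$ I would avoid fibres and use the joining definition directly. Let $J'\subset Z\times Y$ be a joining with $(\pi_Z\times\pi_Y)(J')=Z_{eq}\times Y_{eq}$. Set $J=(\pi\times\id)^{-1}(J')$, a closed invariant subset of $X\times Y$ with full projections. Since $Z_{eq}$ is a factor of $X_{eq}$ via some $q$, and $\pi_Z\circ\pi=q\circ\pi_X$, the image $(\pi_X\times\pi_Y)(J)$ is a closed invariant set projecting onto $Z_{eq}\times Y_{eq}$ and saturated for $q\times\id$. However, it need not be all of $X_{eq}\times Y_{eq}$, because of points of $J'$ not over the full fibre. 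This is the main obstacle. To resolve it, I would instead use Theorem \ref{main1}(3)$\Rightarrow$(2) for $X$ and argue as in Part (2) with ``unique minimal subset'': any minimal subset of $\gamma'^{-1}(w)$ lifts to a minimal subset of $X\times Y$ lying in $\gamma^{-1}(p^{-1}(w))$. Then uniqueness of the minimal subset over a residual set of $z$, combined with the translation action of $\ker p$, forces uniqueness downstairs. Finally, apply Theorem \ref{main1}(1)$\Rightarrow$(3) for $Z$ and $Y$.
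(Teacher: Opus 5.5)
There is a genuine gap. Neither part is actually proved, and the idea missing from both is Lemma \ref{lift RP}.

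\textbf{Part (2).} You correctly see that the commuting square only gives $(\pi\times\id)(\gamma^{-1}(z))\subseteq\gamma'^{-1}(p(z))$, and that the reverse inclusion is what you need. Both of your attempts at it fail.
\begin{itemize}
\item The first cannot work. Since $\gamma$ is $T\times S$-invariant, every orbit closure stays inside a single fibre $\gamma^{-1}(z)$; your limit point $(x',y')$ lands in $\gamma^{-1}(z)$, as you yourself write. So it never reaches $\gamma^{-1}(z+k)$ for $k\in\ker p\setminus\{0\}$.
\item The second is circular. You reduce the nonemptiness of $\{z:(\pi\times\id)\gamma^{-1}(z)=\gamma'^{-1}(p(z))\}$ to finding $w$ with $\gamma'^{-1}(w)$ minimal, which is the conclusion $Z\perp_{SQ}Y$.
\end{itemize}
Here is what closes the step. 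Let $q:X_{eq}\to Z_{eq}$ be the induced map. Lemma \ref{lift RP} gives $\pi_X(\pi^{-1}(u))=q^{-1}(\pi_Z(u))$ for every $u\in Z$: if $q(\pi_X(x''))=\pi_Z(u)$, then $(\pi(x''),u)\in{\bf RP}(Z)$, so some $x'\in\pi^{-1}(u)$ has $(x'',x')\in{\bf RP}(X)$.

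Write $\alpha=\phi_X\circ\pi_X$. Then $\ker p=\phi_X(\ker q)$. To see this in the model $E=(G\times H)/\Gamma$, take $(g,h)+\Gamma\in\ker p$, so $(q(g),h)\in\Gamma'=(q\times\id)(\Gamma)$. Pick $(a,h)\in\Gamma$ with $q(a)=q(g)$. Then $(g,h)+\Gamma=\phi_X(g-a)$ with $g-a\in\ker q$.

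Hence $\alpha(\pi^{-1}(u))=\alpha(x)+\ker p$ for any $x\in\pi^{-1}(u)$. Now take $(u,y)\in\gamma'^{-1}(p(z))$ with a lift $(x,y)\in\gamma^{-1}(z+k)$. You can correct $x$ inside $\pi^{-1}(u)$ to some $x_1$ with $(x_1,y)\in\gamma^{-1}(z)$. This identity, valid for every $z$, is exactly what the paper uses for (2): minimality of $\gamma^{-1}(z)$ passes to its image $\gamma'^{-1}(p(z))$, and Theorem \ref{char of SQ} finishes.

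\textbf{Part (1).} You abandon the direct joining argument on a false claim, and your own observations refute it. A subset of $X_{eq}\times Y_{eq}$ that is saturated for $q\times\id$ and maps onto $Z_{eq}\times Y_{eq}$ must be all of $X_{eq}\times Y_{eq}$.

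The saturation is precisely the identity $\pi_X(\pi^{-1}(u))=q^{-1}(\pi_Z(u))$ above. For $(u,y)\in J'$ and $g\in q^{-1}(\pi_Z(u))$ there is $x\in\pi^{-1}(u)$ with $\pi_X(x)=g$, and $(x,y)\in J=(\pi\times\id)^{-1}(J')$. So $J$ projects onto $X_{eq}\times Y_{eq}$, $X\perp_Q Y$ gives $J=X\times Y$, and $J'=(\pi\times\id)(J)=Z\times Y$. This is the paper's proof of (1).

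Your fibre-based fallback has the same hole as (2). Two distinct minimal subsets of $\gamma'^{-1}(w)$ lift to minimal sets in possibly different fibres $\gamma^{-1}(z+k_1)$ and $\gamma^{-1}(z+k_2)$. The ``translation action of $\ker p$'' you invoke is not available on $X\times Y$: only translations by multiples of the generator are realized, by $T^n\times\id$. Once the identity is known, both lifts can be taken inside $\gamma^{-1}(z)$ for a single $z$ whose fibre has a unique minimal subset. Uniqueness then descends to $\gamma'^{-1}(p(z))$.
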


Further, we will show that quasi-disjointness is preserved by equicontinuous extensions, proximal extensions and taking inverse limits. Consequently, it turns out that minimal {\bf PI} systems (see Section 2 for the definition) are quasi-disjoint  from all minimal systems. Moreover, we prove that minimal {\bf AI} systems (see Section 2 for the definition) are strongly quasi-disjoint from all minimal systems by using different approach. That is, we have

\begin{thm}\label{main3}
Every minimal {\bf PI} system is quasi-disjoint from all minimal systems and every minimal {\bf AI} system is strongly quasi-disjoint from all minimal systems.
\end{thm}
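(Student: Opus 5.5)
The plan is to follow the structure theory of minimal \textbf{PI} and \textbf{AI} systems. A minimal \textbf{PI} system is a factor of a minimal system obtained from the trivial system by a (possibly transfinite) tower of proximal and equicontinuous (isometric) extensions, with inverse limits taken at limit ordinals. A minimal \textbf{AI} system is the same kind of tower built from equicontinuous extensions and inverse limits only. Because Theorem~\ref{main2} says quasi-disjointness and strong quasi-disjointness pass to factors, it suffices to prove the statements for the towers themselves. We fix an arbitrary minimal system $(Y,S)$ throughout. The argument is a transfinite induction along the tower $(X_\eta)_{\eta\le\theta}$. The base case is immediate: the trivial system $X_0$ is strongly quasi-disjoint from $Y$, since $Eq(X_0,Y)$ is trivial and $\gamma^{-1}(z)=X_0\times Y\cong Y$ is minimal.

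For the \textbf{PI} part we prove three preservation results, each phrased through the characterization in Theorem~\ref{main1}: $X\perp_Q Y$ holds iff some fiber $\gamma^{-1}(z)$ has a unique minimal subset.
\begin{enumerate}
\item[(a)] \emph{Proximal extensions.} Let $\pi:X'\to X$ be proximal. Then $X'$ and $X$ have the same maximal equicontinuous factor. Hence $Eq(X',Y)=Eq(X,Y)$, and $\gamma'=\gamma\circ(\pi\times\id)$. Suppose $M_1,M_2$ are minimal subsets of $\gamma'^{-1}(z)$. Their images under $\pi\times\id$ are minimal in $\gamma^{-1}(z)$, so they coincide. We then use proximality of $\pi\times\id$ restricted to the preimage to conclude $M_1=M_2$: two minimal sets over the same minimal base under a proximal extension must coincide, because proximal pairs cannot lie in distinct minimal sets.
\item[(b)] \emph{Inverse limits.} If $X=\varprojlim X_\eta$ and $X_\eta\perp_Q Y$ for every $\eta$, we show $X\perp_Q Y$. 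Take a joining $J$ with $\pi_X\times\pi_Y(J)=X_{eq}\times Y_{eq}$. Its projections $J_\eta\subset X_\eta\times Y$ are joinings. We check that they project onto $(X_\eta)_{eq}\times Y_{eq}$, using that $(X_\eta)_{eq}$ is a factor of $X_{eq}$. Then $J_\eta=X_\eta\times Y$ for every $\eta$, and so $J=X\times Y$ by density.
\item[(c)] \emph{Equicontinuous (isometric) extensions.} This is the main obstacle. We first reduce to group extensions: every isometric extension of minimal systems is a factor of a group extension $X'=X\times_\sigma K$ (Furstenberg, via the Ellis group), and Theorem~\ref{main2} handles the factor. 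Let $J\subset X'\times Y$ be a joining with full image in $X'_{eq}\times Y_{eq}$, and let $J_0\subset X\times Y$ be its projection. By hypothesis $J_0=X\times Y$. The group $K$ acts on $X'\times Y$ commuting with the dynamics. The stabilizer $H=\{k:kJ=J\}$ is a closed subgroup, and we would show $J$ is a union of fibers over $X'/H$. The quotient $X'/H\to X$ is then a group extension of $X$. If $H\neq K$, the joining $J$ encodes a nontrivial common factor relation between $X'/H$ and $Y$ above $X$. We would argue that this relation produces a common equicontinuous factor of $X'$ and $Y$ that is not seen in $J$'s projection. That contradicts $\pi_{X'}\times\pi_Y(J)=X'_{eq}\times Y_{eq}$. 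Making this rigorous, that is, showing the cocycle obstruction yields an equicontinuous eigen-structure, is the delicate step. We would first try the Mackey-type argument: pass to the minimal subsets of fibers of $\gamma$ and use the equivalence (1)$\Leftrightarrow$(3) of Theorem~\ref{main1}.
\end{enumerate}

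For the \textbf{AI} part we instead use the criterion of Theorem~\ref{char of SQ}: some fiber $\gamma^{-1}(z)$ is minimal. We run the same induction using only (b) and (c), in strengthened forms. For inverse limits, a fiber over a generic $z$ is the inverse limit of the fibers at each stage, since $Eq(X_\eta,Y)$ increases to $Eq(X,Y)$. An inverse limit of minimal sets is minimal, and we use the residual sets $\Omega_\eta$ from Theorem~\ref{main1} to choose a common good $z$ by Baire category. For group extensions $X'=X\times_\sigma K$, we need a fiber of $\gamma'$ that is minimal, i.e.\ that $K$ acts transitively on the minimal subsets of the fiber (after quotienting by the part of $K$ absorbed into $Eq(X',Y)$). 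This follows from the group-extension analysis in (c): orbit closures of $K$-translates of a minimal set in the fiber are permuted by $K$, and the stabilizer argument identifies the fiber as a single minimal set. No proximal step appears, which is why only \textbf{AI} systems are obtained here.
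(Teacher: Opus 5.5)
Your outline has genuine gaps in both halves.

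\textbf{PI part.} Steps (a) and (b) are sound. In fact (a) is a clean alternative to the paper's syndetic/thick argument: two minimal subsets of $\gamma'^{-1}(z)$ lying over the unique minimal set of $\gamma^{-1}(z)$ contain points $(x_1',y),(x_2',y)$ with $\pi(x_1')=\pi(x_2')$. Such a pair is proximal, so the two minimal sets meet and hence coincide. Step (c), however, is the heart of the matter and is not proved. You assert that a proper stabilizer ``produces a common equicontinuous factor not seen in $J$'s projection'', and you yourself flag this as the delicate step. That kind of eigenvalue contradiction cannot work when the group extension does not enlarge the maximal equicontinuous factor, e.g.\ $X'_{eq}=X_{eq}$. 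Then $Eq(X',Y)=Eq(X,Y)$, every common equicontinuous factor is already seen by $J$, and yet $K$-invariance of $J$ still has to be proved.

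The paper handles this as follows. For a $G$-extension $X\to Y$ and minimal $Z$ with $Y\perp_Q Z$, it splits the extension along the closed normal subgroup $H=\{g:(x,xg)\in{\bf RP}(X)\}$ (Lemma~\ref{decom of group ext}). On the $H$-part the maximal equicontinuous factor is unchanged, and the argument is topological (Lemma~\ref{group ext1}):
\begin{enumerate}
\item Each translate $Jg$ is again a joining with full image.
\item Hence finitely many translates of $J_V=\bigcup_{g\in V}Jg$ cover $X\times Z$, so $J_V$ has a nonempty invariant interior $W$ with $(\pi_X\times\id)(W)$ dense in $X_{eq}\times Z$.
\item Lemma~\ref{density of inv open}, which rests on Lemma~\ref{inv relation}(4), then forces $J_V=X\times Z$, and so $J=\bigcap_k J_{V_k}=X\times Z$.
\end{enumerate}
Only the $H\backslash G$-part, which is mirrored by an $H\backslash G$-extension of the maximal equicontinuous factors, is treated through fibers of $\gamma$ and Theorem~\ref{main1} (Lemma~\ref{group ext2}). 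Without an argument of this kind the \textbf{PI} half is incomplete.

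\textbf{AI part.} Your description of \textbf{AI} towers (equicontinuous extensions and inverse limits only) is the definition of an \textbf{I}-system. By Furstenberg's structure theorem that is exactly a minimal distal system. \textbf{AI} towers also allow almost one-to-one extensions, which are proximal. So ``no proximal step appears'' is false, and your induction never treats those steps; you would need something like Lemma~\ref{SQD for almost 1-1}.

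Moreover, the strong form of the group-extension step, that some fiber of $\gamma'$ is a single minimal set, is only asserted. It is not a consequence of (c), which at best gives a unique minimal subset; upgrading uniqueness to minimality of the whole fiber needs extra input.

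The paper avoids the tower here. By Theorem~\ref{twostate}(2), suppose $X$ is \textbf{PI}, so generic fibers of $\gamma$ have a unique minimal subset. Suppose also the minimal points of $X\times Y$ are residual, so by Lemma~\ref{semi-2} they are dense in generic fibers. Then generic fibers are minimal. An \textbf{AI} system is \textbf{PI}, and by Veech its distal points form a residual set $X'$ with every point of $X'\times Y$ minimal, which gives $X\perp_{SQ}Y$.
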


We remark that it remains an open question if quasi-disjointness and strong quasi-disjointness are the same property, see the last section for a discussion.

\subsection*{Organization of the paper} In section 2, we give some notions and lemmas used later. In section 3, we establish the relation between quasi-disjointness and strong quasi-disjointness and show Theorem \ref{main1}. In section 4, we show that quasi-disjointness is preserved by proximal extensions and taking factors and show Theorem \ref{main2}. In section 5, we show that quasi-disjointness is preserved by equicontinuous extensions. In section 6, we study systems (strong) quasi-disjoint from all minimal systems and  show Theorem \ref{main3}. In section 7, we give some remarks and ask some questions.

\medskip
\subsection*{Acknowledgement:} We would like to thank Wen Huang and Song Shao for useful discussions.

\section{Preliminaries}
\subsection{Basics on topological dynamical systems}\label{section2.1}
A {\it topological dynamical system} (or system for short) is a pair $(X,T)$, where $X$ is a compact metric space and $T: X\rightarrow X$ is a homeomorphism on $X$. Throughout the paper, we use $\rho_{X}$ (or $\rho$ when there is no risk of ambiguity) to denote the metric on $X$.

Let $(X,T)$ be a system and $x\in X$. The {\it orbit} of $x$ is $\{T^{n}x: n\in\mathbb{Z}\}$, which is denoted by $orb_{T}(x)$ or $orb(x)$. A system is {\it topologically transitive} (or transitive for short) if there is some point whose orbit is dense and such a point is called a {\it transitive point}. A system is {\it minimal} if the orbit of every point is dense. A point $x\in X$ is a {\it minimal point} if the restriction of $T$ on $\overline{orb_{T}(x)}$ is a minimal subsystem.

Let $(X,T)$ and $(Y,S)$ be two systems. If there is a continuous onto map $\pi: X\rightarrow Y$ such that $\pi\circ T=S\circ\pi$, then we say that $(X,T)$ is an {\it extension} of $(Y,S)$ and $(Y,S)$ is a {\it factor} of $(X,T)$. In this case, we also say that $\pi$ is a factor map or a {\it homomorphism}. Further, if $\pi$ is one to one, then we say that $(X,T)$ and $(Y,S)$ are {\it topologically conjugate} or {\it isomorphic}.

Let $(X,T)$ be a system. A pair $(x,y)$ of points in $X$ is said to be {\it proximal} if there is a subsequence $(n_i)$ in $\mathbb{Z}$ such that $\rho(T^{n_i}x, T^{n_i}y)\rightarrow 0$ as $i\rightarrow \infty$. We use ${\bf P}(X)$ to denote the set of proximal pairs in $X$. The system $(X,T)$ is {\it distal} if every pair of distinct points $x,y\in X$ are not proximal. $(X,T)$ is {\it equicontinuous} if for every $\epsilon>0$ there is some $\delta>0$ such that $\rho(T^{n}x, T^{n}y)<\epsilon$ for every $n\in\mathbb{Z}$ whenever $\rho(x,y)<\delta$.

Let $\pi: (X,T)\rightarrow (Y,S)$ be an extension between two systems. If for every $x_1,x_2\in X$ with $\pi(x_1)=\pi(x_2)$ are proximal then we say that $\pi$ is a {\it proximal extension}. If for every $x_1\neq x_2\in X$ with $\pi(x_1)=\pi(x_2)$ are not proximal then we say that $\pi$ is a {\it distal extension}. If for every $\epsilon>0$ there is some $\delta>0$ such that for every $x_1,x_2\in X$ with $\pi(x_1)=\pi(x_2)$, one has $\rho(T^{n}x_1, T^{n}x_2)<\epsilon$ for every $n\in \mathbb{Z}$, then we say that $\pi$ is an {\it equicontinuous extension}. If the set $\{x\in X: |\pi^{-1}\pi(x)|=1\}$ is residual in $X$, then we say that $\pi$ is an {\it almost one to one extension}.


Let $\pi: (X,T)\rightarrow (Y,S)$ be an extension between minimal systems. Suppose that there is  a countable ordinal $\eta$ and a family of systems and homomorphisms $\{\pi_{\alpha\beta}:(X_{\alpha}, T_{\alpha})\rightarrow (X_{\beta}, T_{\beta}): \beta<\alpha\leq \eta\}$ such that
\begin{enumerate}
\item[(1)] $Y=X_{0}, X=X_{\eta}, \pi=\pi_{\eta 0}$,
\item[(2)] if $\gamma<\beta<\alpha\leq \eta$ then $\pi_{\alpha\gamma}=\pi_{\beta\gamma}\pi_{\alpha\beta}$,
\item[(3)] if $\alpha\leq \eta$ is a limit ordinal, then $X_{\alpha}=\underset{\longleftarrow}{\lim}_{\beta<\alpha}X_{\beta}$.
\end{enumerate}
We say that $\pi$ is
\begin{enumerate}
\item[(a)] an {\bf I}-{\it extension} if $\pi_{\alpha+1,\alpha}: X_{\alpha+1}\rightarrow X_{\alpha}$ is an equicontinuous extension for each $\alpha<\eta$;
\item[(b)] an {\bf AI}-{\it extension} if $\pi_{\alpha+1,\alpha}: X_{\alpha+1}\rightarrow X_{\alpha}$ is an equicontinuous  or almost one to one extension for each $\alpha<\eta$;
\item[(c)] a strictly {\bf PI}-{\it extension} if $\pi_{\alpha+1,\alpha}: X_{\alpha+1}\rightarrow X_{\alpha}$ is an equicontinuous or a proximal extension for each $\alpha<\eta$.
\end{enumerate}

A minimal system $(X,T)$ is a {\bf PI} system if there is a minimal system $(\tilde{X}, T)$ and a proximal extension $\theta: \tilde{X}\rightarrow X$ such that $\tilde{\pi}: \tilde{X}\rightarrow Y=\{pt\}$ is a strictly {\bf PI}-extension (\cite{EGS75}).

It follows from Furstenberg's structure theorem of minimal distal systems that a minimal system is distal if and only if it is an ${\bf I}$-extension of a trivial system (\cite{Fur63}).  A minimal system $(X,T)$ is {\it point-distal} if there is  a point $x\in X$ such that $(x,y)$ is not proximal for every $y\in X$ with $y\neq x$. Veech showed in \cite{Veech70} that a minimal system is point-distal if and only if it is an {\bf AI}-extension of a trivial system. 

Let $(X,T)$ be a system. We say $(x,y)$ is {\it proximal} if $\inf_{n\in\Z} \rho(T^nx,T^ny)=0$, and $x\in X$ is a {\it distal point} if $x$ is only proximal to itself. Veech \cite{Veech70}  showed that if a minimal system is {\bf AI} then set of distal points is residual in $X$. Moreover, if $x$ is a distal point then $(x,y)$ is a minimal point of $X\times Y$ for any $y$ in a minimal system, see \cite{Fur63}. For more on the structure of minimal systems,  one may see \cite{Glasner76, Veech77}.

\subsection{Semiopenness}

A continuous map $\pi: X\rightarrow Y$ between two topological spaces is {\it semiopen} if for every nonempty open subset $U$ of $X$, the interior of $\pi(U)$ is not empty.

The following lemmas are well-known, which will be used frequently.
\begin{lem}\cite[Chapter 1, Theorem 15]{Aus88}\label{semiopen of factor}
If $\pi:(X,T)\rightarrow (Y,S)$ is a homomorphism between minimal systems, then $\pi$ is semiopen.
\end{lem}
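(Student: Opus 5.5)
We need to prove: a homomorphism $\pi:(X,T)\to(Y,S)$ between minimal systems is semiopen, i.e. for every nonempty open $U\subset X$ the set $\pi(U)$ has nonempty interior.

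The plan is to use minimality twice: once in $X$ to cover it by finitely many translates of a closed set inside $U$, and once more to transport interior back into $U$. Baire category on $Y$ is what produces interior.

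\emph{Step 1 (finite cover).} Fix a nonempty open $U\subset X$ and choose a nonempty open $V$ with $\overline{V}\subset U$. Since $X$ is minimal, every orbit meets $V$, so $X=\bigcup_{n\in\Z}T^{n}V$. By compactness there are finitely many integers $n_1,\dots,n_k$ with $X=\bigcup_{i=1}^{k}T^{n_i}\overline{V}$.

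\emph{Step 2 (Baire).} Applying $\pi$ and using $\pi\circ T=S\circ\pi$ together with surjectivity gives
\[
Y=\bigcup_{i=1}^{k}S^{n_i}\pi(\overline{V}).
\]
Each $\pi(\overline{V})$ is compact, hence closed, and $S$ is a homeomorphism, so $Y$ is a finite union of closed sets. One of them therefore has nonempty interior (a finite union of closed nowhere dense sets is nowhere dense, and $Y\neq\emptyset$). Thus $S^{n_i}\pi(\overline{V})$ has nonempty interior for some $i$, and so does $\pi(\overline{V})$.

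\emph{Step 3 (from $\overline{V}$ back to $U$).} Since $\pi(\overline{V})\subset\pi(U)$, the interior of $\pi(U)$ contains that of $\pi(\overline{V})$, which is nonempty. Choosing $\overline{V}\subset U$ in Step 1 is what makes this step trivial; otherwise one would only get interior for $\pi$ of the closure of $U$. So $\pi$ is semiopen.

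None of these steps is a real obstacle. Minimality of $Y$ is not even needed beyond $\pi$ being onto. The only point needing care is to work with closed sets, so that Baire category (here just its finite version) applies.
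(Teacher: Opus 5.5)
Your proof is correct and complete. The paper gives no proof of this lemma; it only cites Auslander's book. Your argument is the standard one behind that result: minimality of $X$ gives a finite cover of $X$ by translates of $\overline{V}\subset U$, and a finite union of closed sets with empty interior has empty interior. You are also right that only minimality of $X$, surjectivity of $\pi$, and $S$ being a homeomorphism are used.
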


\begin{lem}\cite[Chapter 7, Theorem 3]{Aus88}\label{dis-open}
If $\pi:(X,T)\rightarrow (Y,S)$ is a  homomorphism between minimal distal systems, then $\pi$ is open.
\end{lem}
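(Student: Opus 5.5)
The plan is to show that the fibre map $y\mapsto \pi^{-1}(y)$ from $Y$ into the hyperspace $2^X$ (with the Hausdorff topology) is continuous. For a map between compact metric spaces, continuity of the fibre map is equivalent to $\pi$ being open. Upper semicontinuity is automatic from compactness and continuity of $\pi$, so only \emph{lower} semicontinuity needs work. The two ingredients are Fort's theorem and the Ellis enveloping semigroup $E(X)$, which is a group because $X$ is distal.

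First I would apply Fort's theorem: an upper semicontinuous set-valued map into $2^X$ is continuous on a dense $G_\delta$ set $Y_0\subset Y$. Fix $y_0\in Y_0$.

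Next I would record the algebraic facts needed. Since $(X,T)$ is minimal distal, $E(X)$ is a group of bijections of $X$, and $\pi$ induces a surjective homomorphism $E(X)\to E(Y)$ with $\pi(px)=\hat p\,\pi(x)$. Given $y\in Y$, minimality gives a net $T^{n_k}y_0\to y$. Passing to a subnet, I assume $T^{n_k}\to u$ in $E(X)$ and $\pi^{-1}(T^{n_k}y_0)\to K$ in $2^X$. Then
$$u\,\pi^{-1}(y_0)\subset K\subset \pi^{-1}(y).$$
Applying the same reasoning to $u^{-1}\in E(X)$ gives $u^{-1}\pi^{-1}(y)\subset\pi^{-1}(y_0)$. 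Since $u$ is a bijection, this forces $u\,\pi^{-1}(y_0)=\pi^{-1}(y)$. So every fibre is a limit of translates of the generic fibre $\pi^{-1}(y_0)$, although so far only along specially chosen nets.

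The main obstacle is upgrading this to lower semicontinuity along an \emph{arbitrary} net $y_j\to y$, because the elements of $E(X)$ are not continuous. My first attempt would use distality of $Y\times Y$. Every point of $Y\times Y$ is then minimal, so the pair $(y_0,y)$ has a minimal orbit closure $N\subset Y\times Y$. I would run the argument above on the fibred product
$$\{(x,x')\in X\times X:(\pi x,\pi x')\in N\}.$$
This is again a distal system whose orbit closures partition it into minimal sets. The point is to choose, for each $y_j$, a time $n$ with $T^{n}y_0$ close to $y_j$ and $T^{n}y_0'$ close to $y$ simultaneously, in a uniform way. Continuity at $y_0$ then transports $\pi^{-1}(y_0)$ close to both $\pi^{-1}(y_j)$ and $\pi^{-1}(y)$. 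Combined with equality of the limit fibre (the computation with $u$ above), this yields points of $\pi^{-1}(y_j)$ converging to any prescribed $x\in\pi^{-1}(y)$.

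If this uniformity is hard to arrange directly, I would fall back on proving that $\pi(U)$ is open for every open $U\subset X$. The route would be: $\pi(U)$ has nonempty interior by Lemma~\ref{semiopen of factor}, and the group $E(X)$ acts transitively on fibres, which would be used to spread that interior to every point of $\pi(U)$.
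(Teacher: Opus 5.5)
\paragraph{A genuine gap.} The step you flag as ``the main obstacle'' is never carried out. That step is lower semicontinuity of $y\mapsto\pi^{-1}(y)$ at an arbitrary $y$ along an arbitrary sequence $y_j\to y$, and it is the whole content of the lemma. Neither sketch closes it.

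\begin{itemize}
\item The Fort step does not help. Continuity of the fibre map at $y_0$ does propagate along the orbit of $y_0$, because $\pi^{-1}(S^nw)=T^n\pi^{-1}(w)$. But it only controls neighbourhoods of $S^ny_0$ whose size depends on $n$, since the $T^n$ have no common modulus of continuity.
\item The ``uniform'' choice of times in your first attempt is therefore an equicontinuity-type requirement, which a distal non-equicontinuous system does not satisfy. Passing to the fibred product over $N\subset Y\times Y$ does not change this, and the point $y_0'$ is never specified.
\item The fallback fails for the same reason. Apart from the $T^n$, elements of $E(X)$ are not continuous on $X$, so they cannot carry the interior of $\pi(U)$ to other points.
\item Your identity $u\,\pi^{-1}(y_0)=\pi^{-1}(y)$ holds for every $y_0$, so the genericity of $y_0$ is never actually used.
\end{itemize}

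\paragraph{The missing idea.} The paper gives no proof of this lemma; it cites Auslander's book, Chapter 7, Theorem 3. The standard argument uses the continuity that $E(X)$ does have: for fixed $r\in E(X)$, the map $q\mapsto qr$ is continuous, since $(qr)x=q(rx)$.

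Given $y_j\to y$ and $x\in\pi^{-1}(y)$, use minimality of $Y$ to choose $p_j\in E(X)$ with $p_jy=y_j$, where $E(X)$ acts on $Y$ through $\pi$. Pass to a subnet with $p_j\to p$. Then $py=\lim p_jy=y$, so $p^{-1}y=y$, and
\[
p_jp^{-1}\longrightarrow pp^{-1}=\id, \qquad \pi(p_jp^{-1}x)=p_j(p^{-1}y)=p_jy=y_j .
\]
Hence $x_j:=p_jp^{-1}x\in\pi^{-1}(y_j)$ and $x_j\to x$ along the subnet. Arguing by contradiction, a subnet suffices, so this gives lower semicontinuity at every $y$, i.e.\ openness of $\pi$. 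Neither Fort's theorem nor $Y\times Y$ is needed.

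Your computation with $u$ and $u^{-1}$ was one step away. Translate $y$ onto the $y_j$, rather than $y_0$ onto $y$, and then correct on the right by the inverse of the limit.
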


\begin{lem}\label{semi-1} Let $\pi:X \rightarrow Y$ be a semiopen factor map between two systems $(X,T)$ and $(Y,S)$. If $\Omega$ is a residual subset of $Y$, then $\pi^{-1}\Omega$ is residual in $X$.
\end{lem}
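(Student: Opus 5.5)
Write $\Omega=\bigcap_{n\ge1}V_n$, where each $V_n\subset Y$ is open and dense; such a representation exists because $\Omega$ is residual (it contains a dense $G_\delta$ set). Since $\pi$ is continuous, each $\pi^{-1}V_n$ is open in $X$. Also
\[
\pi^{-1}\Omega\supset\bigcap_{n}\pi^{-1}V_n .
\]
So it suffices to show that each $\pi^{-1}V_n$ is dense in $X$. By the Baire category theorem in the compact metric space $X$, the countable intersection $\bigcap_n \pi^{-1}V_n$ is then a dense $G_\delta$, and hence $\pi^{-1}\Omega$ is residual.

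Density of $\pi^{-1}V_n$ is where semiopenness enters. Let $U\subset X$ be a nonempty open set. Because $\pi$ is semiopen, the set $W=\operatorname{int}\pi(U)$ is a nonempty open subset of $Y$. Since $V_n$ is open and dense, $W\cap V_n\neq\emptyset$. Pick $y\in W\cap V_n$. Then $y\in\pi(U)$, so there is $x\in U$ with $\pi(x)=y\in V_n$. Thus $x\in U\cap\pi^{-1}V_n$, and this intersection is nonempty. Hence $\pi^{-1}V_n$ meets every nonempty open set, i.e.\ it is dense.

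There is no serious obstacle. The one point requiring care is that density of a preimage is not automatic for arbitrary continuous maps; it is exactly semiopenness that lets us pass from ``$V_n$ meets every open set of $Y$'' to ``$\pi^{-1}V_n$ meets every open set of $X$''. The dynamics ($T$, $S$) play no role beyond ensuring that $X$ and $Y$ are compact metric spaces, so that Baire's theorem applies. In applications, semiopenness is supplied by Lemma~\ref{semiopen of factor} when both systems are minimal.
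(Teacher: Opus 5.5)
Your proof is correct and is the paper's argument written out in full: semiopenness makes the preimage of a dense set dense, applied to the open dense sets $V_n$, with continuity keeping each $\pi^{-1}V_n$ open. One small wording point: a residual $\Omega$ need only contain such an intersection ($\Omega\supset\bigcap_n V_n$) rather than equal it, and that containment is all your later inclusion $\pi^{-1}\Omega\supset\bigcap_n\pi^{-1}V_n$ uses.
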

\begin{proof}
The semiopeness of $\pi$ implies that if $D\subset Y$ is a dense set then $\pi^{-1}D$ is dense in $X$. Then the lemma follows.\end{proof}

The following lemma was established in \cite{Veech70} for the factor map of minimal systems. We emphasize that Veech's proof relied solely on the semi-openness of the map. So the similar arguments yields that the lemma holds true for semiopen maps.

\begin{lem}\label{semi-2} Let $\pi: X\ra Y$ be a semiopen factor map between two systems $(X,T)$ and $(Y,S)$. If $\Sigma$ is a residual subset of $X$, then there is a residual subset  $\Omega$ of $Y$ such that $\pi^{-1}(y)\cap \Sigma$ is residual in $\pi^{-1}(y)$ for each $y\in \Omega$.
\end{lem}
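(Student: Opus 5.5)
We prove Lemma \ref{semi-2} by a Kuratowski--Ulam type argument that uses only semiopenness of $\pi$, together with a countable base of $X$.

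First, write $\Sigma\supset\bigcap_{n\ge1}U_n$, where each $U_n$ is open and dense in $X$. A countable intersection of residual subsets of a fiber is residual in that fiber, and a countable intersection of residual sets in $Y$ is residual in $Y$. It therefore suffices to treat a single open dense set $U\subset X$. For such $U$ we must find a residual $\Omega_U\subset Y$ such that $U\cap\pi^{-1}(y)$ is dense in $\pi^{-1}(y)$ for every $y\in\Omega_U$. Since $\pi^{-1}(y)$ is compact, hence Baire, the set $\pi^{-1}(y)\cap\bigcap_n U_n$ is then residual in $\pi^{-1}(y)$ for $y\in\Omega=\bigcap_n\Omega_{U_n}$.

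Fix a countable base $\{V_k\}$ of $X$. Note that $U\cap\pi^{-1}(y)$ fails to be dense in $\pi^{-1}(y)$ exactly when there is some $k$ with $V_k\cap\pi^{-1}(y)\neq\emptyset$ and $V_k\cap U\cap\pi^{-1}(y)=\emptyset$. For fixed $k$, this bad set is contained in
\[
B_k=\pi(V_k)\setminus\pi(V_k\cap U).
\]
It is enough to show that each $B_k$ is nowhere dense in $Y$; then $\Omega_U=Y\setminus\bigcup_k\overline{B_k}$ is residual.

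To see that $B_k$ is nowhere dense, we need the closure version. The set $\pi(\overline{V_k})$ is closed, and
\[
\pi(\overline{V_k})\setminus\pi(\overline{V_k}\cap U)
\]
is contained in $\pi(\overline{V_k})\setminus\pi(W)$ for any open $W\subset V_k\cap U$. The key step is to show that $\pi(V_k\cap U)$ is dense in $\pi(V_k)$. Let $O\subset Y$ be open with $O\cap\pi(V_k)\neq\emptyset$. Then $\pi^{-1}(O)\cap V_k$ is a nonempty open set. Since $U$ is dense, $\pi^{-1}(O)\cap V_k\cap U\neq\emptyset$, so $O$ meets $\pi(V_k\cap U)$.

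This density alone does not make the difference small, so we use semiopenness. Suppose $B_k$ is dense in some nonempty open $O\subset Y$. Then $\pi(V_k)$ is dense in $O$, so $\pi^{-1}(O)\cap V_k\neq\emptyset$, and hence the open set $W=\pi^{-1}(O)\cap V_k\cap U$ is nonempty. By semiopenness, $\pi(W)$ contains a nonempty open set $O'\subset O$. This $O'$ lies inside $\pi(V_k\cap U)$, so it is disjoint from $B_k$, contradicting the density of $B_k$ in $O$. Hence $B_k$ is nowhere dense, and consequently so is $\overline{B_k}$.

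The main obstacle is the bookkeeping in the last two steps. We have to apply the argument to $B_k$ itself rather than to a closed version, and then check that for $y\notin\bigcup_k\overline{B_k}$ every basic set $V_k$ meeting $\pi^{-1}(y)$ also meets $U\cap\pi^{-1}(y)$. This follows directly from the definition of $B_k$, because $y\notin B_k$.
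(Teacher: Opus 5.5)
Your proof is correct. After reducing to a single open dense $U$, semiopenness is used only once, to show that each $B_k=\pi(V_k)\setminus\pi(V_k\cap U)$ is nowhere dense; the paper gives no proof of its own, only citing Veech and remarking that his argument needs nothing beyond semiopenness, and yours supplies exactly such an argument. The digression about $\pi(\overline{V_k})$ and about $\pi(V_k\cap U)$ being dense in $\pi(V_k)$ is never used and can be deleted.
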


Let $\pi: X\rightarrow Y$ be a continuous onto map between compact metric spaces. A point $x\in X$ is called an {\it open point} of $\pi$ if for any neighborhood $U$ of $x$, $\pi(x)$ is an interior point of $\pi(U)$. The following result is well-known. 

\begin{lem}\label{open points}
Let $\pi: X\rightarrow Y$ be a continuous onto map between compact metric spaces. If $\pi$ is semiopen then the set of open points of $\pi$ is residual in $X$.
\end{lem}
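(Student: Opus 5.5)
The plan is to show that the open points of $\pi$ form a dense $G_\delta$ subset of $X$. Fix a countable base $\{U_n\}_{n\ge1}$ for the topology of $X$, which exists since $X$ is compact metric. A point $x$ fails to be an open point exactly when there is some $n$ with $x\in U_n$ and $\pi(x)\notin \operatorname{int}\pi(U_n)$. Hence the set of non-open points is
\[
\bigcup_{n\ge 1} \bigl(U_n\cap \pi^{-1}(\pi(U_n)\setminus \operatorname{int}\pi(U_n))\bigr).
\]
It suffices to show that each term $A_n:=U_n\cap \pi^{-1}(\pi(U_n)\setminus \operatorname{int}\pi(U_n))$ is contained in a closed nowhere dense subset of $X$. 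Then the complement of the union contains a countable intersection of dense open sets, which is residual by the Baire category theorem.

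To handle $A_n$, I would replace $U_n$ by its closure. Let $K_n=\overline{U_n}$, which is compact, so $\pi(K_n)$ is closed. The set $B_n:=\pi(K_n)\setminus\operatorname{int}\pi(K_n)$ is closed with empty interior. Define $F_n:=\pi^{-1}(B_n)$, which is closed in $X$. It is nowhere dense: otherwise it contains a nonempty open set $V$, and semiopenness gives that $\operatorname{int}\pi(V)\neq\emptyset$. But $\pi(V)\subset B_n$, contradicting that $B_n$ has empty interior.

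The main obstacle is the mismatch between $U_n$ and its closure. A point $x\in U_n$ with $\pi(x)\in \operatorname{int}\pi(K_n)$ need not have $\pi(x)\in\operatorname{int}\pi(U_n)$. To fix this, I would quantify with shrinking neighborhoods instead of a fixed base. For $m\ge1$ let
\[
E_m=\{x\in X:\ \pi(x)\notin \operatorname{int}\pi(\overline{B(x,1/m)})\}.
\]
A point $x$ is open if $x\notin E_m$ for all $m$: any neighborhood $U$ of $x$ contains $\overline{B(x,1/m)}$ for some large $m$, and $\operatorname{int}\pi(\overline{B(x,1/m)})\subset\operatorname{int}\pi(U)$.

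It remains to show each $E_m$ is meager. Cover $X$ by finitely many balls $W_1,\dots,W_k$ of radius $1/(2m)$. If $x\in E_m\cap W_j$, then $\overline{W_j}\subset\overline{B(x,1/m)}$, and $\pi(x)\in\pi(\overline{W_j})$. If $\pi(x)$ lay in $\operatorname{int}\pi(\overline{W_j})$, then it would lie in $\operatorname{int}\pi(\overline{B(x,1/m)})$, contradicting $x\in E_m$. So $\pi(x)\in \pi(\overline{W_j})\setminus\operatorname{int}\pi(\overline{W_j})$, and therefore
\[
E_m\cap W_j\subset \pi^{-1}\bigl(\pi(\overline{W_j})\setminus\operatorname{int}\pi(\overline{W_j})\bigr).
\]
By the previous paragraph, the right-hand side is closed and nowhere dense. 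Taking the union over $j$, then over $m$, shows that the set of non-open points is meager. Its complement, the set of open points of $\pi$, is therefore residual.
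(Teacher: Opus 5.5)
Your proof is correct and complete. The paper gives no argument for this lemma; it only quotes it as well known. So the comparison below is with the standard proof rather than with the text.

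You work directly in $X$. For compact $K$, the set $\pi^{-1}\bigl(\pi(K)\setminus\mathrm{int}\,\pi(K)\bigr)$ is closed, and semiopenness forces it to have empty interior. Every non-open point lies in some $E_m$, and the cover by balls of radius $1/(2m)$ puts $E_m$ inside a finite union of such sets. Two minor points: your first paragraph with the countable base is never used, and you show that the open points \emph{contain} a dense $G_\delta$ rather than form one, which is exactly what residual requires.

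The usual textbook route goes through $Y$ instead. The fiber map $y\mapsto\pi^{-1}(y)$ is upper semicontinuous into $2^X$, so by Fort's theorem it is continuous on a residual set $Y_0\subset Y$. Lower semicontinuity at $y\in Y_0$ immediately makes every point of $\pi^{-1}(y)$ an open point. Then Lemma~\ref{semi-1}, whose proof only uses semiopenness, shows that $\pi^{-1}(Y_0)$ is residual in $X$.

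Both routes use semiopenness in the same way: preimages of closed nowhere dense subsets of $Y$ are nowhere dense in $X$. The Fort route relies on a semicontinuity theorem, but it gives a stronger saturated conclusion: entire fibers over a residual subset of $Y$ consist of open points. Your route is fully elementary and self-contained.
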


\begin{lem}\cite[Proposition A2]{Akin}\label{rel semiopen}
Let $X,Y,Z$ be compact metric spaces and $\phi: X\rightarrow Y, \psi: Y\rightarrow Z$ be continuous onto maps. If both $\phi$ and $\psi$ are semiopen, then there is a residual subset $\Omega$ of $Z$ such that for each $z\in \Omega$, the restriction $\phi: \phi^{-1}\psi^{-1}(z)\rightarrow \psi^{-1}(z)$ is semiopen. 
\end{lem}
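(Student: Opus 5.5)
The plan is to apply the Kuratowski--Ulam type criterion on the space $Y$ with the residual base of fibers, using semiopenness of the restricted maps as the transfer mechanism. Let $\Sigma=\{y\in Y: \phi|_{\phi^{-1}(y)}\ \text{is relatively open at each point... }\}$ — more concretely, we proceed as follows. Fix a countable base $\{V_n\}$ of $X$. For each $n$, consider the set
\[
A_n=\{y\in Y: \phi^{-1}(y)\cap V_n\neq\emptyset \text{ and } y \notin \overline{\mathrm{int}}\text{-type failure}\},
\]
and show that for $z$ in a residual subset of $Z$, every nonempty relatively open subset of $\phi^{-1}\psi^{-1}(z)$ has image with nonempty interior in $\psi^{-1}(z)$. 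It suffices to treat relatively open sets of the form $V_n\cap\phi^{-1}\psi^{-1}(z)$, so we need to handle countably many $n$ and intersect the resulting residual sets.

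The key steps, in order, are these. First, by Lemma \ref{open points}, the set $O_\phi$ of open points of $\phi$ is residual in $X$, and the set $O_\psi$ of open points of $\psi$ is residual in $Y$. Second, we note that $\psi\circ\phi$ is semiopen, being a composition of semiopen maps. By Lemma \ref{semi-2} applied to $\psi\circ\phi$ and the residual set $O_\phi\cap\phi^{-1}(O_\psi)$ (which is residual by Lemma \ref{semi-1}), we find a residual $\Omega_1\subset Z$ such that for $z\in\Omega_1$ the open points of $\phi$ lying over open points of $\psi$ are dense in $\phi^{-1}\psi^{-1}(z)$. Third, we fix such a $z$ and a point $x\in V_n\cap\phi^{-1}\psi^{-1}(z)$ that is an open point of $\phi$ with $\phi(x)=y$ an open point of $\psi$. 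Then $\phi(V_n)$ contains an open neighborhood $W$ of $y$ in $Y$, hence $\phi(V_n\cap\phi^{-1}\psi^{-1}(z))\supset W\cap\psi^{-1}(z)$, which is a nonempty relatively open subset of $\psi^{-1}(z)$. Here the fiber equality $\phi(V_n)\cap\psi^{-1}(z)=\phi(V_n\cap\phi^{-1}\psi^{-1}(z))$ is used; it is trivial.

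The main obstacle is the density claim in the second step. Lemma \ref{semi-2} as stated already gives residuality of the good set in the fiber, hence density, so the burden lies on the hypotheses of Lemma \ref{semi-2}. These require only semiopenness of $\psi\circ\phi$ and residuality in $X$. For the latter we must check that $\phi^{-1}(O_\psi)$ is residual, which is exactly Lemma \ref{semi-1} applied to the semiopen map $\phi$; the lemma is stated for factor maps, but its proof uses only semiopenness. Once density in each fiber over $\Omega=\Omega_1$ is secured, the third step works for every $n$ simultaneously, so no further intersection is needed. The restriction $\phi:\phi^{-1}\psi^{-1}(z)\to\psi^{-1}(z)$ is onto, and the argument shows that it is semiopen for all $z\in\Omega$.
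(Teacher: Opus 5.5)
Your argument is correct and is essentially the paper's proof. It uses the residual set of open points of $\phi$, applies Lemma \ref{semi-2} to the semiopen composite $\psi\circ\phi$ to make these points dense in $\phi^{-1}\psi^{-1}(z)$ for residually many $z$, and concludes with the identity $\phi(U)\cap\psi^{-1}(z)=\phi(U\cap\phi^{-1}\psi^{-1}(z))$. The requirement that $\phi(x)$ also be an open point of $\psi$ is never used, and neither are the half-defined sets $\Sigma$, $A_n$ in your first paragraph; drop them, since semiopenness of $\psi$ enters only through the semiopenness of $\psi\circ\phi$.
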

\begin{proof}
Let $\Sigma$ be the set of open points of $\phi$. By Lemma \ref{open points}, $\Sigma$ is residual in $X$. By Lemma \ref{semi-2}, there is residual subset $\Omega$ of $Z$ such that  $\phi^{-1}\psi^{-1}(z)\cap \Sigma$ is residual in $\phi^{-1}\psi^{-1}(z)$ for each $z\in \Omega$. We claim that the restriction $\phi: \phi^{-1}\psi^{-1}(z)\rightarrow \psi^{-1}(z)$ is semiopen for each $z\in \Omega$. For this, we fix some $z\in \Omega$. Let $U$ be a nonempty open subset of $X$ with $U\cap \phi^{-1}\psi^{-1}(z)\neq\emptyset$. Since $\phi^{-1}\psi^{-1}(z)\cap \Sigma$ is residual in $\phi^{-1}\psi^{-1}(z)$, there is some $x\in U\cap \phi^{-1}\psi^{-1}(z)\cap \Sigma$. Recall that $x$ is an open point of $\phi$. There is an open neighborhood $V$ of $\phi(x)$ in $Y$ such that $V\subset \phi(U)$.  Thus 
\[\phi(x)\in V\cap \psi^{-1}(z)\subset \phi(U)\cap \psi^{-1}(z)=\phi(U\cap \phi^{-1}\psi^{-1}(z) ).\]
This implies that $x$ is an interior point of $\phi(U\cap \phi^{-1}\psi^{-1}(z) )$ under the relative topology of $\psi^{-1}(z)$. Since $U$ is chosen arbitrarily, we conclude that the restriction $\phi: \phi^{-1}\psi^{-1}(z)\rightarrow \psi^{-1}(z)$ is semiopen.
\end{proof}

\subsection{Maximal equicontinuous factors and regionally proximal relations}\label{MEQ}
Let $(X,T)$ be a system. A pair $(x,y)\in X\times X$ is {\it regionally proximal} if there are sequences $(x_i), (y_i)$ in $X$ and a sequence $(n_i)$ in $\mathbb{Z}$ such that
\[ x_i\rightarrow x, \ \ y_i\rightarrow y \ \text{ and }\ \rho(T^{n_i}x_i, T^{n_i}y_i)\rightarrow 0, \text{ as } i\rightarrow\infty.\]
Let ${\bf RP}(X)$ denote the set of regionally proximal pairs. For $x\in X$, the regionally proximal cell of $x$ is $\{y\in X: (x,y)\in{\bf RP}(X)\}$, which is denoted by ${\bf RP}[x]$.

For a minimal system $(X,T)$, it is known that ${\bf RP}(X)$ is a closed invariant equivalence relation (\cite{Veech68}). Further, the quotient $X_{eq}:=X/{\bf RP}(X)$ is the maximal equicontinuous factor of $X$, which means that if $\pi: X\rightarrow Y$ is factor and $(Y,S)$ is equicontinuous then $Y$ is a factor of $X_{eq}$ such that the following commuting diagram holds.
\[
\begin{tikzcd}
X \arrow[d, "\pi" '] \arrow[r, "\pi_{X}"]
  & X_{eq} \arrow[ld, "\phi"] \\
Y &
\end{tikzcd}
\]
Regionally proximal relations have the following lifting property.
\begin{lem}\label{lift of RP}\cite[Theorem 3.8]{SY12}
Let $\pi: (X,T)\rightarrow (Y,S)$ be an extension between minimal systems. Then one has $\pi\times\pi({\bf RP(X)})={\bf RP}(Y)$.
\end{lem}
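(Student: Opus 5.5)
The plan is to prove the two inclusions separately. The inclusion $\pi\times\pi({\bf RP}(X))\subset{\bf RP}(Y)$ is a direct check; the reverse inclusion is the substantive part, and I would obtain it from the group structure of the maximal equicontinuous factors.

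\emph{The easy inclusion.} Let $(x,x')\in{\bf RP}(X)$, witnessed by $x_i\to x$, $x_i'\to x'$ and $\rho(T^{n_i}x_i,T^{n_i}x_i')\to 0$. Since $\pi$ is uniformly continuous and $\pi T=S\pi$, the points $\pi x_i\to\pi x$ and $\pi x_i'\to\pi x'$ satisfy $\rho(S^{n_i}\pi x_i,S^{n_i}\pi x_i')\to0$. Hence $(\pi x,\pi x')\in{\bf RP}(Y)$.

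\emph{Setting up the reverse inclusion.} By Subsection \ref{MEQ}, ${\bf RP}(X)=\{(x,x'):\pi_X x=\pi_X x'\}$, and similarly for $Y$. Write $Z=X_{eq}$ and $W=Y_{eq}$; both are minimal rotations on compact abelian groups. The composition $\pi_Y\circ\pi$ is an equicontinuous factor of $X$, so by maximality there is a factor map $\psi:Z\to W$ with $\psi\pi_X=\pi_Y\pi$. After normalising base points, $\psi$ is a continuous surjective homomorphism; let $K=\ker\psi$. Now fix $(y_1,y_2)\in{\bf RP}(Y)$ and choose any $x_1\in\pi^{-1}(y_1)$. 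It suffices to find $x_2\in\pi^{-1}(y_2)$ with $\pi_X x_2=\pi_X x_1$. The point $\pi_X x_1$ lies in $\psi^{-1}(\pi_Y y_1)=\psi^{-1}(\pi_Y y_2)$, which is a coset of $K$. So the claim reduces to the following: for every $y\in Y$, the set $\pi_X(\pi^{-1}(y))$ is the whole coset $\psi^{-1}(\pi_Y y)$.

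\emph{Proving the claim (main obstacle).} Consider the fibre product $P=\{(y,z)\in Y\times Z:\pi_Y y=\psi z\}$ with the diagonal action. The group $K$ acts on $P$ by $(y,z)\mapsto(y,z+u)$, and this action commutes with the dynamics. Let $M$ be the image of $X$ under $x\mapsto(\pi x,\pi_X x)$; it is a minimal subset of $P$. Put $H=\{u\in K: M+u=M\}$, a closed subgroup of $K$.
\begin{itemize}
\item If $(y,z),(y,z')\in M$, then $u=z'-z\in K$, and the minimal sets $M$ and $M+u$ intersect, so they coincide and $u\in H$. Hence each fibre $M_y=\{z:(y,z)\in M\}$ is a single coset of $H$.
\item By the previous point, $y\mapsto M_y$ is a well-defined map $Y\to Z/H$. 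Its graph is closed, so it is continuous, and it is equivariant. Thus $Z/H$ is an equicontinuous factor of $Y$, and by maximality this map factors through $W$.
\item Composing with the projection $Z/H\to Z/K=W$ recovers $\pi_Y$. So the induced map $W\to Z/H\to W$ is the identity on $W$. Since $\pi_Y$ is onto, $W\to Z/H$ is onto, and therefore $Z/H\to Z/K$ is injective, which forces $H=K$.
\end{itemize}
Consequently every fibre $M_y$ is a full $K$-coset, which is the claim, and the lemma follows.

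The step I expect to need the most care is this last bookkeeping: verifying that the induced maps are homomorphisms up to translation and that the relevant square commutes, so that the conclusion $H=K$ is legitimate.
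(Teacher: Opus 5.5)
Your proof is correct, but it follows a different route from the paper. The paper gives no argument for this lemma beyond the citation to Shao--Ye. The substantive inclusion ${\bf RP}(Y)\subset\pi\times\pi({\bf RP}(X))$ is, however, proved in the paper as Lemma \ref{lift RP}, by a short compactness argument built on Veech's characterization (Lemma \ref{Veech}). One writes $S^{n_i}y\to y^*$ and $S^{-n_i}y^*\to y'$, passes to subsequences with $T^{n_i}x\to x^*$ and $T^{-n_i}x^*\to x'$, and applies the same criterion in $X$.

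You avoid the sequential characterization. Instead you use that ${\bf RP}$ is the kernel relation of the map to the maximal equicontinuous factor, together with a compact-group argument in the fibre product $P=Y\times_{Y_{eq}}X_{eq}$. The fibres of the minimal set $M$ are cosets of its stabiliser $H\le K$. The map $y\mapsto M_y$ is then an equicontinuous factor of $Y$, and maximality of $Y_{eq}$ forces $H=K$.

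Your route is longer and specific to the first-order relation, since it uses the group structure of $X_{eq}$. In return it gives a more structural statement: $x\mapsto(\pi x,\pi_X x)$ maps $X$ onto $P$, and in fact your argument shows $P=M$ is minimal. This yields the pointwise lifting of Lemma \ref{lift RP} for every $x_1\in\pi^{-1}(y_1)$ at once. Both approaches ultimately rest on a deep theorem of Veech: either his characterization of ${\bf RP}$, or the fact that ${\bf RP}$ is a closed equivalence relation whose quotient is $X_{eq}$.

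One small correction in your last bullet. Write $\theta\colon Y\to Z/H$, $y\mapsto M_y$, and let $\eta\colon W\to Z/H$ be the induced map with $\eta\circ\pi_Y=\theta$. Surjectivity of $\eta$ comes from surjectivity of $\theta$, since every $z\in Z=\pi_X(X)$ lies in some fibre $M_y$. It does not come from surjectivity of $\pi_Y$ alone. Surjectivity of $\pi_Y$ is what gives $\bar\psi\circ p\circ\eta=\id_W$, where $p\colon Z/H\to Z/K$ is the projection and $\bar\psi\colon Z/K\to W$ is induced by $\psi$. These two facts already make $p$ injective, hence $H=K$. No further bookkeeping about the maps being homomorphisms up to translation is needed.
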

The following characterization of regionally proximal relation shown by Veech will used later.
\begin{lem}\cite{Veech68}\label{Veech}
Let $(X,T)$ be a minimal system. Then $(x,y)\in {\bf RP}(X)$ if and only if there is a sequence $(n_i)$ in $\mathbb{Z}$ and $z\in X$ such that
\[T^{n_i}x\rightarrow z \ \ \text{and }\ \ T^{-n_i}z\rightarrow y.\]
\end{lem}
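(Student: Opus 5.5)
The statement is Veech's characterization: for a minimal system $(X,T)$, $(x,y)\in{\bf RP}(X)$ if and only if there are a sequence $(n_i)$ in $\mathbb{Z}$ and a point $z\in X$ with $T^{n_i}x\to z$ and $T^{-n_i}z\to y$. I will argue in the Ellis enveloping semigroup $E=E(X)$, or equivalently with nets of integers, passing to sequences at the end by metrizability.

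\emph{Sufficiency} is the easy direction. Suppose $T^{n_i}x\to z$ and $T^{-n_i}z\to y$. Put $x_i=x$ and $y_i=T^{-n_i}z$. Then $x_i\to x$, $y_i\to y$, and
\[\rho(T^{n_i}x_i,T^{n_i}y_i)=\rho(T^{n_i}x,z)\to 0 .\]
So $(x,y)\in{\bf RP}(X)$ directly from the definition.

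\emph{Necessity} is the real content. I would use the standard fact (Ellis–Auslander) that for minimal $X$, ${\bf RP}(X)$ equals the closure of $\bigcup_{u}\{(x,y): ux=uy\}$ taken over idempotents $u$ in minimal left ideals of $E$. More useful here is Veech's approach through $\beta\mathbb{Z}$. Let $p\in\beta\mathbb{Z}$ be a limit of $n_i$ along an ultrafilter, so $p$ acts as $T^p$, and write $p^{-1}$ for the corresponding limit of $-n_i$. I must show that for $(x,y)\in{\bf RP}$ there is some $p$ with $T^{-p}T^{p}x=y$, where $T^{-p}$ is computed on the point $T^p x$ and not as a group inverse.

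The key steps, in order, are as follows. First, use that $X_{eq}=X/{\bf RP}$ is an equicontinuous group rotation. Second, fix a minimal idempotent $u$ with $ux=x$, so that $x$ is $u$-almost periodic. Third, show that the set $R_x=\{T^{-p}T^{p}x : p\in\beta\mathbb{Z}\}$ is closed, and then that it contains ${\bf RP}[x]$. Closedness should follow because $R_x$ is the image of a compact space under a suitable map; the map $(p,q)\mapsto T^{q}T^{p}x$ restricted to the pairs $(p,q)$ in the closure of $\{(n,-n)\}$ in $\beta\mathbb{Z}\times\beta\mathbb{Z}$ has compact domain. Since $(x,x)\in{\bf RP}$ and ${\bf RP}[x]$ is the fiber over $\pi(x)$, it remains to show that every $y$ in the fiber is attained.

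That last point is the main obstacle. I would attack it by showing that the set $\bigcup_p T^{-p}T^p x$ is invariant under the relation generated by proximality and by "$T^{n_i}x_i\to$, $T^{n_i}y_i$ close". Concretely, from $x_i\to x$, $y_i\to y$ and $\rho(T^{n_i}x_i,T^{n_i}y_i)\to0$, pass to a subnet with $T^{n_i}x_i\to w$ and $T^{n_i}y_i\to w$. I then need to replace $x_i$ by $x$ itself. Here minimality and uniform recurrence enter: using $ux=x$ and Veech's trick, one picks $m_i$ with $T^{m_i}x$ near $x_i$ in a controlled way. Because $X_{eq}$ is a group rotation, we may arrange $\pi(T^{n_i}x)\to\pi(w)$. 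Then, by a diagonal argument over a countable base, we find times $k_i$ with $T^{k_i}x\to z$ and $T^{-k_i}z$ close to $y$.

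If this direct diagonalization proves too delicate, my fallback is Veech's original route. I would first prove that for $x$ in a residual set of $X$ (the points of continuity of $x\mapsto{\bf RP}[x]$ or distal-like points), ${\bf RP}[x]=\{y:\exists p,\ T^{-p}T^px=y\}$. I would then extend this to all $x$ using closedness of ${\bf RP}$ together with the compactness of $\beta\mathbb{Z}$.
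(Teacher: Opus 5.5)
Your sufficiency argument is correct: with $x_i=x$ and $y_i=T^{-n_i}z$ the definition of ${\bf RP}(X)$ is satisfied verbatim. The necessity direction, however, is not proved, and it is the entire content of Veech's theorem. The paper does not reprove it; it imports it from \cite{Veech68}.

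The one concrete argument you offer toward necessity is invalid. The action $(q,w)\mapsto T^{q}w$ of $\beta\mathbb{Z}$ on $X$ is continuous in $q$ but not in $w$. So the composite $p\mapsto T^{-p}(T^{p}x)$ on the graph $\{(p,-p):p\in\beta\mathbb{Z}\}$ has no reason to be continuous, and compactness of the domain gives nothing. Worse, this map equals $x$ at every principal ultrafilter $n\in\mathbb{Z}$, and these are dense. If it were continuous you would get $R_x=\{x\}$, contradicting the inclusion ${\bf RP}[x]\subseteq R_x$ you want whenever ${\bf RP}[x]\neq\{x\}$. The nontrivial points of $R_x$ exist precisely because of this discontinuity.

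Separately, the ``standard fact'' you quote is false. The set $\bigcup_u\{(x,y):ux=uy\}$ is the proximal relation. For the minimal distal system $(a,b)\mapsto(a+\alpha,b+a)$ on $\mathbb{T}^2$ one has ${\bf P}(X)=\Delta_X$, which is closed, while ${\bf RP}(X)\neq\Delta_X$.

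The step you call the main obstacle is exactly where an idea is missing, and your sketch cannot supply it. Choosing $m_i$ with $T^{m_i}x$ close to $x_i$ gives no control of $\rho(T^{n_i+m_i}x,T^{n_i}x_i)$: closeness at time $0$ does not propagate to time $n_i$ without equicontinuity. Knowing $\pi(T^{n_i}x)\to\pi(w)$ only locates limit points modulo ${\bf RP}$, which is the relation being characterized.

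Even the best you can get cheaply does not suffice. Lemma \ref{inv relation}(4) with $(Y,S)=(X,T)$ gives the one-sided form: $y_k\to y$ and $n_k$ with $\rho(T^{n_k}x,T^{n_k}y_k)\to 0$. This yields only $T^{n_k}x\to z$ together with points $z_k=T^{n_k}y_k\to z$ satisfying $T^{-n_k}z_k\to y$. The statement demands a single point $z$ with $T^{-n_k}z\to y$ along return times of $x$ to $z$. No diagonal argument over a countable base replaces $z_k$ by $z$, because the maps $T^{-n_k}$ are not equicontinuous.

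Your fallback has the same defect. Passing from a residual set of $x$ to all $x$ would require the relation $\{(x,T^{-p}T^{p}x)\}$ to be closed, which is the same unproved closedness. Closedness of ${\bf RP}$ does not help, since it keeps limits inside ${\bf RP}$, not inside that relation. As written, the proposal establishes only the easy implication.
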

This leads to the following lifting property.
\begin{lem}\label{lift RP}
Let $\pi:(X,T)\rightarrow(Y,T)$ be an extension between minimal systems. Then for any  $(y,y')\in{\bf RP}(Y)$ and $x\in \pi^{-1}(y)$, there is some $x'\in \pi^{-1}(y')$ such that $(x,x')\in{\bf RP}(X)$.
\end{lem}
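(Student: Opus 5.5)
Let $(y,y')\in{\bf RP}(Y)$ and $x\in\pi^{-1}(y)$. The plan is to use Veech's characterization (Lemma \ref{Veech}) in $Y$ to get a sequence $(n_i)$ and a point $w\in Y$ with $S^{n_i}y\to w$ and $S^{-n_i}w\to y'$. We then lift this two-step convergence to $X$ using compactness and minimality, and apply Lemma \ref{Veech} again in $X$.

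First, by compactness of $X$, pass to a subsequence so that $T^{n_i}x\to z$ for some $z\in X$. Continuity of $\pi$ gives $\pi(z)=\lim S^{n_i}y=w$. Next, pass to a further subsequence so that $T^{-n_i}z\to x'$ for some $x'\in X$. Then $\pi(x')=\lim S^{-n_i}\pi(z)=\lim S^{-n_i}w=y'$, so $x'\in\pi^{-1}(y')$. Since $X$ is minimal, Lemma \ref{Veech} applies to the pair $(x,x')$ with the sequence $(n_i)$ and the point $z$, and yields $(x,x')\in{\bf RP}(X)$.

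The only delicate point is the order of the subsequence extractions. The limit point $z$ must be fixed before extracting the second subsequence. This causes no trouble, because passing to a subsequence of $(n_i)$ preserves both $T^{n_i}x\to z$ and $S^{-n_i}w\to y'$. So Lemma \ref{Veech}, applied in both directions, gives the result directly.
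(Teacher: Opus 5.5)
Your proof is correct and is essentially the paper's own argument. Both apply Lemma \ref{Veech} in $Y$, lift first $T^{n_i}x\to z$ and then $T^{-n_i}z\to x'$ along nested subsequences using compactness, and apply Lemma \ref{Veech} again in $X$; your remark on the order of the subsequence extractions is exactly the point that needs checking.
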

\begin{proof}
By Lemma \ref{Veech}, there  is a sequence $(n_i)$ in $\mathbb{Z}$ and $y^{*}\in Y$ such that
\[T^{n_i}y\rightarrow y^* \ \ \text{and }\ \ T^{-n_i}y^*\rightarrow y'.\]
By passing to some subsequence, we may assume that $T^{n_i}x\rightarrow x^{*}\in X$. Then $x^{*}\in\pi^{-1}(y^*)$. Further, we may assume that $T^{-n_i}x^*\rightarrow x'\in X$. Then $x'\in\pi^{-1}(y')$. Using Lemma \ref{Veech} again, we conclude that $(x,x')\in{\bf RP}(X)$.
\end{proof}

Let $(X,T)$ and $(Y,S)$ be minimal systems. Let $\lambda$ be an invariant measure on $Y$ and $N$ be a closed invariant subset of $X\times Y$. For $x\in X$, let $N[x]=\{y\in Y:\ (x,y)\in N\}$. Then the following lemma holds (see \cite[Chapter 11, Lemma 3,4,5,6]{Aus88}).
\begin{lem}\label{inv relation}
\begin{enumerate}
\item[(1)] For any $x,x'\in X$, $\lambda(N[x])=\lambda(N[x'])$.
\item[(2)] If $D_{N}$ is defined on $X\times X$ by $D_{N}(x,x')=\lambda(N[x]\Delta N[x'])$, then $D_{N}$ is continuous and $T\times T$-invariant.
\item[(3)] If $K_{N}$ is the equivalence relation on $X$ defined by $D_{N}$, that is $(x,x')\in K_{N}$ if $D_{N}(x,x')=0$, then $K_{N}$ is closed and $T\times T$-invariant and ${\bf RP}(X)\subset K_{N}$.
\item[(4)] Let $x\in X, V$ open in $Y$, and let $N=\overline{orb_{T\times S}(\{x\}\times V)}$. Then $ {\bf RP}[x]\times V\subset N$.
\end{enumerate}
\end{lem}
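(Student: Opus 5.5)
The four assertions build on each other: the invariance and semicontinuity in (1) are used to get the continuity in (2), which gives (3), which gives (4). The basic observation is that invariance of $N$ gives $N[Tx]=S\,N[x]$. Indeed, $y\in N[Tx]$ iff $(Tx,y)\in N$ iff $(x,S^{-1}y)\in N$.

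\emph{Step 1: part (1).} Since $\lambda$ is $S$-invariant, $f(x):=\lambda(N[x])$ is $T$-invariant. Because $N$ is closed, $x_n\to x$ implies that for every open $U\supset N[x]$ we eventually have $N[x_n]\subset U$. By outer regularity of $\lambda$, $f$ is upper semicontinuous. Hence $f$ attains its maximum $c$, and $\{f\ge c\}$ is a nonempty closed invariant set. Minimality of $X$ forces this set to be all of $X$, so $f\equiv c$.

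\emph{Step 2: part (2), which I expect to be the main point.} Let $x_n\to x$ and fix $\epsilon>0$. Choose an open $U\supset N[x]$ with $\lambda(U\setminus N[x])<\epsilon$. For large $n$ we have $N[x_n]\subset U$, so $\lambda(N[x_n]\setminus N[x])<\epsilon$. Since $\lambda(N[x_n])=\lambda(N[x])=c$ by (1), we also get
\[\lambda(N[x]\setminus N[x_n])=\lambda(N[x_n]\setminus N[x])<\epsilon .\]
Thus $D_N(x_n,x)\to 0$. The map $(A,B)\mapsto\lambda(A\Delta B)$ is a pseudometric, so the triangle inequality gives joint continuity of $D_N$. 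Invariance follows from $N[Tx]\Delta N[Tx']=S(N[x]\Delta N[x'])$ and $S$-invariance of $\lambda$.

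\emph{Step 3: part (3).} $K_N$ is the zero set of a continuous invariant pseudometric, so it is a closed, $T\times T$-invariant equivalence relation. Now let $(x,x')\in{\bf RP}(X)$, with $x_i\to x$, $x_i'\to x'$ and $\rho(T^{n_i}x_i,T^{n_i}x_i')\to 0$. Then
\[D_N(x_i,x_i')=D_N(T^{n_i}x_i,T^{n_i}x_i')\to 0,\]
by uniform continuity of $D_N$ on the compact space $X\times X$ and $D_N=0$ on the diagonal. By continuity, $D_N(x,x')=0$.

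\emph{Step 4: part (4).} Here $N$ is closed and invariant, and $V\subset N[x]$. Take $x'\in{\bf RP}[x]$. By (3), $\lambda(N[x]\Delta N[x'])=0$, so $\lambda(V\setminus N[x'])=0$. The set $V\setminus N[x']$ is open because $N[x']$ is closed. The measure $\lambda$ has full support since $Y$ is minimal (its support is a nonempty closed invariant set). Hence $V\setminus N[x']=\emptyset$, that is, $\{x'\}\times V\subset N$.
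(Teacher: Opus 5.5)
Your proof is correct. The paper gives no proof of its own for this lemma; it cites Auslander's book (Chapter 11, Lemmas 3--6), and your argument is essentially the standard one found there: upper semicontinuity plus $T$-invariance makes $x\mapsto\lambda(N[x])$ constant, equality of measures turns the one-sided outer-regularity estimate into continuity of the invariant pseudometric $D_N$, which then vanishes on regionally proximal pairs, and full support of $\lambda$ (from minimality of $Y$) gives (4).
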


\begin{lem}\label{density of inv open}
Let $(X,T)$ and $(Y,S)$ be minimal systems. Let $\pi: X\rightarrow X_{eq}$ be the factor map to the maximal equicontinuous factor. If $W$ is an invariant open set in $X\times Y$ such that $(\pi\times \id) W$ is dense in $X_{eq}\times Y$, then $W$ is dense in $X\times Y$.
\end{lem}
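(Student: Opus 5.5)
We argue by contradiction, using Lemma \ref{inv relation}(4) and minimality of $Y$. Suppose $W\subset X\times Y$ is open and $T\times S$-invariant with $(\pi\times\id)W$ dense in $X_{eq}\times Y$, but $W$ is not dense. Then there are nonempty open sets $U_0\subset X$, $V_0\subset Y$ with $(U_0\times V_0)\cap W=\emptyset$. Since $W$ is invariant, $W$ is disjoint from the closure of the orbit of $U_0\times V_0$. We will show this forces $(\pi\times\id)W$ to miss an open set of $X_{eq}\times Y$.

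First fix $x\in U_0$ and let $N=\overline{orb_{T\times S}(\{x\}\times V_0)}$. By Lemma \ref{inv relation}(4), ${\bf RP}[x]\times V_0\subset N$. Since ${\bf RP}(X)$ is the kernel of $\pi$, we have ${\bf RP}[x]=\pi^{-1}(\pi(x))$, so $\pi^{-1}(\pi x)\times V_0\subset N$. As $W$ is open, invariant and disjoint from $U_0\times V_0$, it is disjoint from $N$. Hence $W\cap(\pi^{-1}(\pi x)\times V_0)=\emptyset$ for every $x\in U_0$, that is, $W\cap(\pi^{-1}(\pi(U_0))\times V_0)=\emptyset$.

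Next, by Lemma \ref{semiopen of factor}, $\pi$ is semiopen, so $\pi(U_0)$ has nonempty interior $O\subset X_{eq}$. Then $\pi^{-1}(O)\times V_0$ is disjoint from $W$, so $(\pi\times\id)W$ does not meet the nonempty open set $O\times V_0$ of $X_{eq}\times Y$. This contradicts density, so $W$ is dense.

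The main point to check is the passage from the single fiber to all of $U_0$. The set $N$ depends on $x$, but each $N$ is contained in $\overline{orb(U_0\times V_0)}$, which lies in the closed invariant complement of $W$. So the argument is uniform in $x$, and the only nontrivial input is Lemma \ref{inv relation}(4), together with the identification ${\bf RP}[x]=\pi^{-1}\pi(x)$ from Section \ref{MEQ}.
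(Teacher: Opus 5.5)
Your proof is correct and uses the same ingredients as the paper's: semiopenness of $\pi$, Lemma \ref{inv relation}(4), and ${\bf RP}[x]=\pi^{-1}\pi(x)$. The paper argues directly, applying Lemma \ref{inv relation}(4) to a slab $\{x\}\times V'\subset W$ to get a point of $\overline{W}\cap(U\times V)$. You instead argue by contradiction and apply the lemma to the slab $\{x\}\times V_0$ inside the closed invariant complement of $W$; this difference is only organizational.
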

\begin{proof}
Let $U,V$ be nonempty open subsets of $X$ and $Y$, respectively. Since $\pi$ is semiopen, $(\pi\times \id)(U\times V)$ has nonempty interior. Thus $(\pi\times \id) W\cap (\pi\times \id)(U\times V)\neq\emptyset$. Then there is some $(x,y)\in W$ such that $(\pi(x),y)\in \pi(U)\times V$. In other words, there is some $x'\in X$ with $(x,x')\in {\bf RP}(X)$ such that $(x,y)\in W$ and $(x',y)\in U\times V$. Since $W$ is open, there is an open neighborhood $V'$ of $y$ such that $\{x\}\times V'\subset W$. By Lemma \ref{inv relation}-(4), $\{x'\}\times V'\subset N:=\overline{orb_{T\times S}(\{x\}\times V')}$. In particular, $(x',y)\in N$. Since $W$ is invariant, one has $(x',y)\in N\subset \overline{W}$. Thus $(U\times V)\cap \overline{W}$. As $U,V$ are chosen arbitrarily, we conclude that $W$ is dense in $X\times Y$.
\end{proof}

A special case of  \cite[Theorem 7.5.1]{HSY} on characterizing regionally proximal relation is as following: For a minimal system $(Y,S)$, $(y,y')\in {\bf RP}(Y)$ if and only if for any minimal equicontinuous system $(X,T)$, any $x\in X$, any neighborhood $U$ of $x$ and any neighborhood $V$ of $y'$,
\[N(x, U)\cap N(y, V):=\{n\in\mathbb{Z}: T^{n}x\in U, S^{n}y\in U\}\neq\emptyset.\]
As a corollary, we have
\begin{lem}\label{eq-RP}
Let $(X, T)$ and $(Y,S)$ be minimal systems. If $(X,T)$ is equicontinuous, then for any $x\in X$ and $y\in Y$, $\{x\}\times {\bf RP}[y]\subset \overline{orb_{T\times S}(x,y)}$.
\end{lem}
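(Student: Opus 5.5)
We deduce Lemma \ref{eq-RP} directly from the characterization of ${\bf RP}(Y)$ quoted above from \cite[Theorem 7.5.1]{HSY}, read with the obvious correction that the condition is $S^{n}y\in V$. Fix $x\in X$, $y\in Y$ and $y'\in{\bf RP}[y]$. We must show that $(x,y')\in\overline{orb_{T\times S}(x,y)}$. It suffices to show that for every neighborhood $U$ of $x$ and every neighborhood $V$ of $y'$ there is some $n\in\mathbb{Z}$ with $T^{n}x\in U$ and $S^{n}y\in V$.

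First we reduce to a minimal equicontinuous system. Let $X_0=\overline{orb_T(x)}$. Since $(X,T)$ is equicontinuous, it is distal, and hence every point of $X$ is minimal. Therefore $(X_0,T)$ is a minimal equicontinuous system containing $x$. In the intended application $(X,T)$ is already minimal, so $X_0=X$ anyway.

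Now we apply the characterization. Take $U$ and $V$ as above, and set $U_0=U\cap X_0$, which is a neighborhood of $x$ in $X_0$. Since $(y,y')\in{\bf RP}(Y)$ and $(X_0,T)$ is minimal equicontinuous, \cite[Theorem 7.5.1]{HSY} gives
\[N(x,U_0)\cap N(y,V)\neq\emptyset.\]
So there is an $n$ with $T^{n}x\in U_0\subset U$ and $S^{n}y\in V$. Hence $(T\times S)^{n}(x,y)\in U\times V$. Since $U$ and $V$ were arbitrary, $(x,y')$ lies in the orbit closure of $(x,y)$. As $y'\in{\bf RP}[y]$ was arbitrary, this gives $\{x\}\times{\bf RP}[y]\subset\overline{orb_{T\times S}(x,y)}$.

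The only delicate point is matching the form of the quoted characterization, namely its direction (from $(y,y')\in{\bf RP}$ to nonempty return-time intersections) and the neighborhood typo. This direction is exactly what we use, so no real obstacle is expected.
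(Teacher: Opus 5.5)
Your proof is correct and is the paper's route: the paper states Lemma~\ref{eq-RP} as an immediate corollary of the quoted characterization of ${\bf RP}(Y)$, read with $S^{n}y\in V$, and you write out that deduction, namely that $N(x,U)\cap N(y,V)\neq\emptyset$ for all neighborhoods $U$ of $x$ and $V$ of $y'$ is exactly the statement $(x,y')\in\overline{orb_{T\times S}(x,y)}$. Your preliminary reduction to $\overline{orb_T(x)}$ is harmless but unnecessary, since $(X,T)$ is already assumed minimal.
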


The following lemma is a special case of \cite[Theorem A]{QXYY25}.
\begin{lem}\label{saturate}
Let $(X,T)$ and $(Y,S)$ be minimal systems. Then there is a dense $G_{\delta}$ subset $\Omega$ of $X\times Y$ such that ${\bf RP}[x]\times {\bf RP}[y]\subset \overline{orb_{T\times S}(x,y)}$ for each $(x,y)\in \Omega$.
\end{lem}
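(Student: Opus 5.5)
The plan is a semicontinuity argument. Consider the map $F: X\times Y\to 2^{X\times Y}$, $F(x,y)=\overline{orb_{T\times S}(x,y)}$, where $2^{X\times Y}$ carries the Hausdorff metric. This map is lower semicontinuous: if $(x_k,y_k)\to(x,y)$, every point of the orbit of $(x,y)$ is a limit of points of the orbits of $(x_k,y_k)$. By the classical Fort theorem, the set $\Omega$ of continuity points of $F$ is a dense $G_\delta$ subset of $X\times Y$. I will take this $\Omega$, possibly intersected with further residual sets obtained from Lemma \ref{semi-1} and Lemma \ref{semi-2} applied to $\pi_X\times\pi_Y$. The key consequence of continuity at $(x,y)$ is
\[
F(x,y)=\bigcap_{U\ni x,\,V\ni y}\overline{orb_{T\times S}(U\times V)} .
\]
So it suffices to show that $(x',y')\in\overline{orb_{T\times S}(U\times V)}$ for every $x'\in{\bf RP}[x]$, every $y'\in{\bf RP}[y]$, and all open neighbourhoods $U\ni x$, $V\ni y$.

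Fix such $U,V$ and set $N=\overline{orb_{T\times S}(U\times V)}$, a closed invariant set. Lemma \ref{inv relation}-(4) gives
\[
N\supset\overline{orb_{T\times S}(\{x\}\times V)}\supset {\bf RP}[x]\times V .
\]
With the roles of $X$ and $Y$ exchanged, it also gives $N\supset U\times{\bf RP}[y]$. Continuity of $F$ at $(x,y)$ then already yields $(x',y)\in F(x,y)$ and $(x,y')\in F(x,y)$.

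The remaining step, which I expect to be the main obstacle, is to move both coordinates at once. The first idea is to apply Lemma \ref{inv relation}-(4) a second time with base point $x'$ in place of $x$. For this I need an open set of the form $U'\times V'$, with $U'$ a neighbourhood of $x'$, inside $N$ (or inside $F$ evaluated at a nearby generic point). The fact ${\bf RP}[x]\times V\subset N$ only gives a vertical slice $\{x'\}\times V$, which is not open. I would try to fatten it in one of two ways.

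\emph{Via Veech's characterization.} Use Lemma \ref{Veech}: pick $n_i$ with $T^{n_i}x\to z$ and $T^{-n_i}z\to x'$. Points $(T^{n_i}u,S^{n_i}v)$ with $(u,v)\in U\times V$ fill out a set whose closure contains an open set around $(z,w)$ for a suitable $w$; here I would use semiopenness of factor maps between minimal systems (Lemma \ref{semiopen of factor}). Pulling back by $T^{-n_i}$ and applying the $Y$-side version of Lemma \ref{inv relation}-(4) should then give $(x',y')\in N$.

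\emph{Via the measure-theoretic relation.} Alternatively, use Lemma \ref{inv relation}-(2),(3) with a fully supported invariant measure $\lambda$ on $Y$. Since ${\bf RP}(X)\subset K_N$, the sections $N[x]$ and $N[x']$ agree up to a $\lambda$-null set. Because $N[x]\supset{\bf RP}[y]\cup V$ and the sections are closed, I would try to upgrade this to $N[x']\supset{\bf RP}[y]$. This requires showing that ${\bf RP}[y]$ is contained in the support of $\lambda|_{N[x]}$. That in turn should follow by working with $N=\overline{orb(U\times V)}$ for arbitrarily small $V$ and using invariance, possibly after restricting $\Omega$ further via Lemma \ref{semi-2}.

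Once $(x',y')\in N$ for all $U,V$, the displayed identity for $F(x,y)$ gives ${\bf RP}[x]\times{\bf RP}[y]\subset F(x,y)$ for every $(x,y)\in\Omega$, which completes the plan.
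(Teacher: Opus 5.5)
\paragraph{The reduction is circular.} Since $F(x,y)\subset\overline{orb_{T\times S}(U\times V)}$ for every $U\ni x$, $V\ni y$, your displayed identity makes the target statement \emph{equivalent} to ${\bf RP}[x]\times{\bf RP}[y]\subset F(x,y)$ at a continuity point. That target statement is: $(x',y')\in\overline{orb_{T\times S}(U\times V)}$ for all $x'\in{\bf RP}[x]$, $y'\in{\bf RP}[y]$ and all $U,V$. So the step you leave open is the whole lemma.

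\paragraph{The gap.} Neither fattening sketch says how the genericity of $(x,y)$ is used. The Veech sketch uses none, and the claim that the sets $(T\times S)^{n_i}(U\times V)$ accumulate on an open set is unsupported. Some use of genericity is unavoidable, because the target fails at some points, even at some continuity points of $F$. Here is an example. Let $X=Y$ be Sturmian and $\pi_X:X\to\mathbb{T}$ the almost one-to-one map onto the rotation by $\theta$. Let $x_k^{\pm}$ be the two points over $k\theta$, approached from the right and from the left. Take $m\neq n$, $c=(m-n)\theta$ and $(x,y)=(x_m^+,x_n^-)$. For small $U,V$:
\begin{itemize}
\item every $(a,b)\in orb_{T\times T}(U\times V)$ has $\pi_X(a)-\pi_X(b)\in[c,c+2\ep)$;
\item every $(a,b)$ near $(x_m^-,x_n^+)$ has $\pi_X(a)-\pi_X(b)\le c$;
\item the points of $orb_{T\times T}(U\times V)$ with difference exactly $c$ form $orb_{T\times T}(x_m^+,x_n^-)$, which accumulates only on the graph of $T^{m-n}$.
\end{itemize}
Hence $(x_m^-,x_n^+)\in{\bf RP}[x]\times{\bf RP}[y]$ is not in $\overline{orb_{T\times T}(U\times V)}$. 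The same one-sided computation shows that $(x,y)$ is a continuity point of $F$, with $F(x,y)=orb_{T\times T}(x,y)\cup\{(T^{m-n}u,u):u\in X\}$. Your one-coordinate conclusions do hold there. In the terms of your measure sketch, $x_n^+$ is not in the support of $\lambda|_{N[x_m^+]}$: near $x_n^+$ the section $N[x_m^+]$ lies in the null set $\pi_X^{-1}(n\theta)$.

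\paragraph{The fix.} The missing idea is to use genericity a second time and move the two coordinates at two \emph{different} generic points, not inside one set $N$.
\begin{itemize}
\item Let $\Omega_0$ be the set of continuity points of $F$. You correctly showed $({\bf RP}[x]\times\{y\})\cup(\{x\}\times{\bf RP}[y])\subset F(x,y)$ on $\Omega_0$.
\item The map $\pi_X\times\id:X\times Y\to X_{eq}\times Y$ is semiopen with fibres ${\bf RP}[x]\times\{y\}$. Lemmas \ref{semi-2} and \ref{semi-1} then give a residual $\Omega\subset\Omega_0$ such that, for $(x,y)\in\Omega$, the set $D=\{x'\in{\bf RP}[x]:(x',y)\in\Omega_0\}$ is dense in ${\bf RP}[x]$.
\item For $x'\in D$ you get $(x',y)\in F(x,y)$, hence $\{x'\}\times{\bf RP}[y]\subset F(x',y)\subset F(x,y)$. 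Closedness of $F(x,y)$ then gives ${\bf RP}[x]\times{\bf RP}[y]\subset F(x,y)$.
\end{itemize}
In the example, this extra condition is exactly what excludes $(x_m^+,x_n^-)$: one checks that $(x_m^-,x_n^-)\notin\Omega_0$.

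With this step your route becomes a short self-contained proof, using Fort's theorem plus lemmas already in the paper. The paper itself gives no argument and quotes the statement as a special case of Theorem A of [QXYY25].
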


We have the following corollary that will be used in Section 6.
\begin{cor}\label{open-trans}
Let $(X,T), (Y,S)$ be minimal systems and $\pi_{X}: X\rightarrow X_{eq}, \pi_{Y}: Y\rightarrow Y_{eq}$ be the factor maps to their maximal equicontinuous factors, respectively. If $\pi_{X}$ is open, then there is a dense $G_{\delta}$ subset $\Omega$ of $X\times Y$ such that for each $(x,y)\in\Omega$,
\begin{enumerate}
\item $M_{x,y}:=(\pi_{X}\times\pi_{Y})^{-1}\left(\overline{orb_{T\times S}(\pi_{X}(x),\pi_{Y}(y))} \right)$ is a transitive subsystem of $X\times Y$ and
\item $(x,y)$ is a transitive point of this subsystem $M_{x,y}$.
\end{enumerate}
\end{cor}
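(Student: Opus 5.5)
We combine Lemma \ref{saturate} with the openness of $\pi_X$. Let $\Omega_0$ be the dense $G_\delta$ subset of $X\times Y$ given by Lemma \ref{saturate}, so that ${\bf RP}[x]\times{\bf RP}[y]\subset\overline{orb_{T\times S}(x,y)}$ for every $(x,y)\in\Omega_0$. Set
\[\Omega:=\bigcap_{n\in\Z}(T\times S)^{n}\Omega_0,\]
which is again a dense $G_\delta$ subset, and is now invariant. Fix $(x,y)\in\Omega$ and write $O=\overline{orb_{T\times S}(x,y)}$. Put $e=(\pi_X(x),\pi_Y(y))$ and $E=\overline{orb(e)}$. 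The product $X_{eq}\times Y_{eq}$ is equicontinuous, so it is a union of minimal sets and $E$ is minimal.

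\emph{Step 1: $O$ is saturated on its dense orbit.} For each $n$, the point $(T^nx,S^ny)$ lies in $\Omega_0$. Hence
\[{\bf RP}[T^nx]\times{\bf RP}[S^ny]\subset O .\]
Fibres of $\pi_X\times\pi_Y$ are exactly the products ${\bf RP}[x']\times{\bf RP}[y']$, so this gives $(\pi_X\times\pi_Y)^{-1}(orb(e))\subset O$. Clearly $O\subset M_{x,y}$. It remains to prove $M_{x,y}\subset O$. This also gives (1) and (2), since $O$ is then a transitive subsystem with transitive point $(x,y)$.

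\emph{Step 2: pass to the closure using openness.} Take $(x',y')\in M_{x,y}$, so $(\pi_X(x'),\pi_Y(y'))\in E$. Choose $n_i$ with
\[(\pi_X(T^{n_i}x),\pi_Y(S^{n_i}y))\to(\pi_X(x'),\pi_Y(y')).\]
Because $\pi_X$ is open, the fibre map $u\mapsto\pi_X^{-1}(u)$ is lower semicontinuous. So there are $x_i\in\pi_X^{-1}(\pi_X(T^{n_i}x))$ with $x_i\to x'$.

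\emph{Main obstacle: the $Y$-coordinate.} Here $\pi_Y$ need not be open, so we cannot directly lift $\pi_Y(y')$ near $y'$. To get around this, fix an arbitrary neighbourhood $V$ of $y'$. By Lemma \ref{semiopen of factor}, $\pi_Y(V)$ has nonempty interior $W$ in $Y_{eq}$, though it need not contain $\pi_Y(y')$. We then use the group structure of the equicontinuous minimal set $E$. Every nonempty relatively open subset of $E$ meets the dense orbit $orb(e)$. The relatively open set
\[E\cap\big(\text{int}\,\pi_X(U)\times W\big),\]
with $U$ a neighbourhood of $x'$, is nonempty, because $(\pi_X\times\pi_Y)(U\times V)$ meets $E$. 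Choose $n$ with $(T\times S)^ne$ in this set, and pick $x''\in U$, $y''\in V$ in the fibre over $(T\times S)^ne$.

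By Step 1 we have $(x'',y'')\in O$. This meets $U\times V$. Since $U$ and $V$ are arbitrary, $(x',y')\in O$. The only delicate point is the nonemptiness claim, namely that $(\pi_X\times\pi_Y)(U\times V)$ meets $E$ with nonempty relative interior. Openness of $\pi_X$ makes $\pi_X(U)$ a neighbourhood of $\pi_X(x')$. For the $Y$-side we intersect $E$ with $\pi_X(U)\times\pi_Y(V)$. We would first show this set has nonempty relative interior in $E$, using that the projection $E\to Y_{eq}$ is an open homomorphism of equicontinuous systems (Lemma \ref{dis-open}). We would then argue that this interior point can be chosen. If that fails, the fallback is to replace $\Omega$ by its intersection with $\pi_Y$-open-point preimages from Lemma \ref{open points}.
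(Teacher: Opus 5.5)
Your Step 1 and the reduction are sound: since $(\pi_X\times\pi_Y)^{-1}(orb(e))\subset O$ and $O$ is closed, it suffices to show that $\pi_X(U)\times\pi_Y(V)$ meets $orb(e)$ for every neighbourhood $U\times V$ of $(x',y')\in M_{x,y}$. But the whole proof rests on this step, and it is not established.

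You assert that $E\cap(\mathrm{int}\,\pi_X(U)\times W)\neq\emptyset$ ``because $(\pi_X\times\pi_Y)(U\times V)$ meets $E$''. This does not follow. The only point of $E$ you know to lie in $\pi_X(U)\times\pi_Y(V)$ is $(a,b)=(\pi_X(x'),\pi_Y(y'))$, and $b$ need not lie in $W=\mathrm{int}\,\pi_Y(V)$, because $\pi_Y$ is not open. For example, if $X_{eq}=Y_{eq}$ is the same circle rotation, $E$ is a graph $\{(u,u+c)\}$. Then $E\cap(A\times W)\neq\emptyset$ if and only if $(A+c)\cap W\neq\emptyset$, and knowing only that $W\neq\emptyset$ gives nothing.

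Your closing sentences only restate what has to be shown. The fallback is beside the point: restricting $(x,y)$ to preimages of open points of $\pi_Y$ says nothing about $y'$, which ranges over a whole fibre of $M_{x,y}$. The lifting $x_i\to x'$ in Step 2 is also never used.

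The missing idea is that $b\in\overline{\mathrm{int}\,\pi_Y(V)}$. For every neighbourhood $V'\subset V$ of $y'$, semiopenness gives $\emptyset\neq\mathrm{int}\,\pi_Y(V')\subset\mathrm{int}\,\pi_Y(V)$. Once $V'$ is small, $\pi_Y(V')$ lies in any prescribed neighbourhood of $b$.

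Now the coordinate projection $p\colon E\to Y_{eq}$ is a factor map between minimal equicontinuous systems, hence open (Lemma \ref{dis-open}). Apply it to the relatively open set $E\cap(\mathrm{int}\,\pi_X(U)\times Y_{eq})$, which contains $(a,b)$ precisely because $\pi_X$ is open. The image is an open neighbourhood of $b$, so it meets $\mathrm{int}\,\pi_Y(V)$. Hence $E\cap(\mathrm{int}\,\pi_X(U)\times\mathrm{int}\,\pi_Y(V))$ is a nonempty relatively open subset of $E$, and it meets $orb(e)$.

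With this inserted your argument is complete, and it takes a genuinely different route from the paper's. The paper handles the $Y$-coordinate dynamically through Lemma \ref{eq-RP}, obtaining $\{\pi_X(x')\}\times{\bf RP}[y']\subset(\pi_X\times\id)(O)$. It then uses openness of $\pi_X$ via Hausdorff continuity of $u\mapsto\pi_X^{-1}(u)$ to carry ${\bf RP}[x]\times\{y\}$ onto ${\bf RP}[x']\times\{v\}$. Your version replaces both steps by orbit-saturation plus a purely topological density argument, and so avoids the ${\bf RP}$ characterization behind Lemma \ref{eq-RP}.
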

\begin{proof}
 By Lemma \ref{saturate}, there is dense $G_{\delta}$ subset $\Omega$ of $X\times Y$ such that ${\bf RP}[x]\times {\bf RP}[y]\subset \overline{orb_{T\times S}(x,y)}$ for each $(x,y)\in \Omega$. We claim that this $\Omega$ satisfies our requirements. To this end, it suffices to show that
\[\overline{orb_{T\times S}(x,y)}=M_{x,y},\ \  \forall (x,y)\in \Omega.\]

Take any $(x,y)\in \Omega$. Clearly, $\overline{orb_{T\times S}(x,y)}\subset M_{x,y}$. Next we show the other inclusion. Let $N_{x,y}=\overline{orb_{T\times S}(\pi_{X}(x),\pi_{Y}(y))}$.

Now we fix any $(x',y')\in X\times Y$ with $(\pi_{X}(x'),\pi_{Y}(y'))\in N_{x,y}$.

\noindent {\bf Claim 1}. For any $v\in {\bf RP}[y']$, there is some $u\in {\bf RP}[x']$ with $(u,v)\in \overline{orb_{T\times S}(x,y)}$.
\begin{proof}[Proof of Claim 1]
 Let $L_{x,y}=(\pi_{X}\times \id)(\overline{orb_{T\times S}(x,y)})$. Then $\overline{orb_{T\times S}(x,y)}\xrightarrow{\pi_{X}\times \id} L_{x,y}\xrightarrow{\id\times\pi_Y} N_{x,y}$
are homomorphisms. Thus there is some $y''\in \pi_{Y}^{-1}(y')={\bf RP}[y']$ such that $(\pi_{X}(x'), y'')\in L_{x,y}$. By Lemma \ref{eq-RP}, we have that
\[\{\pi_{X}(x')\}\times {\bf RP}[y']=\{\pi_{X}(x')\}\times {\bf RP}[y'']\subset \overline{orb_{T\times S}(\pi_{X}(x'), y'')}\subset L_{x,y}.\]
This implies that for any $v\in {\bf RP}[y']$, there is some $u\in {\bf RP}[x']$ with $(u,v)\in \overline{orb_{T\times S}(x,y)}$.
\end{proof}

\noindent{\bf Claim 2}.  For any $v\in {\bf RP}[y']$, ${\bf RP}[x']\times\{v\}\subset \overline{orb_{T\times S}(x,y)}$.
\begin{proof}[Proof of Claim 2]
Fix $v\in {\bf RP}[y']$. By Claim 1, there is some $u\in {\bf RP}[x']$ with $(u,v)\in \overline{orb_{T\times S}(x,y)}$. Then there is a subsequence $(n_i)$ in $\mathbb{Z}$ such that $T^{n_i}\times S^{n_i}(x,y)\rightarrow (u,v)$. Since $(x,y)\in \Omega$, ${\bf RP}[x]\times {\bf RP}[y]\subset \overline{orb_{T\times S}(x,y)}$. In particular, ${\bf RP}[x]\times \{y\}\subset \overline{orb_{T\times S}(x,y)}$. Note that $T^{n_i}\pi_{X}(x)\rightarrow \pi_{X}(x')$.  Since $\pi_{X}$ is open, the map $X_{eq}\rightarrow 2^{X}, z\mapsto \pi_{X}^{-1}(z)$ is continuous. Thus $T^{n_i}{\bf RP}[x]\rightarrow {\bf RP}[x']$ and hence
\[ {\bf RP}[x']\times \{v\}=\lim_{i\rightarrow\infty}(T\times S)^{n_i} {\bf RP}[x]\times \{y\}\subset \overline{orb_{T\times S}(x,y)} .\]
\end{proof}

Now it follows from Claim 2 that ${\bf RP}[x']\times {\bf RP}[y']\subset \overline{orb_{T\times S}(x,y)}$.  Note that
\[ M_{x,y}=\bigcup\{ {\bf RP}[x']\times {\bf RP}[y']:  (x',y')\in X\times Y \text{ with } (\pi_{X}(x'),\pi_{Y}(y'))\in N_{x,y}\}. \]
Since $(x',y')$ are chosen arbitrarily, we conclude that $M_{x,y}\subset \overline{orb_{T\times S}(x,y)}$ and hence they coincide.
\end{proof}

\subsection{Disjointness and quasi-disjointness}
Let $(X,T)$ and $(Y,S)$ be two systems. A {\it joining} of $(X,T)$ and $(Y,S)$ is a  $T\times S$-invariant closed subset in $X\times Y$ that projects onto each coordinate. $(X,T)$ and $(Y,S)$ are {\it disjoint} if whenever there is a system $(Z, R)$ with $\phi: Z\rightarrow X$ and $\psi: Z\rightarrow Y$, then there is a homomorphism $\theta: Z\rightarrow X\times Y$ such that $\phi=p_{X}\theta$ and $\psi=p_{Y}\theta$, where $p_{X}: X\times Y\rightarrow X$ and $p_{Y}: X\times Y\rightarrow Y$ are projections. We then write $X\perp Y$. An equivalent characterization of disjointness is  that $X\perp Y$ if and only if $X\times Y$ is the only one joining (\cite[Lemma II.1]{Fur67}).

Two systems $(X,T)$ and $(Y,S)$ are {\it weakly disjoint} if $(X\times Y, T\times S)$ is transitive, which is denoted by $X\curlywedge Y$.

\begin{defn}
Two minimal systems $(X,T)$ and $(Y,S)$ are {\it quasi-disjoint}, denoted by $X\perp_{Q} Y$, if $X\times Y$ is the only joining of $X$ and $Y$ that projects onto $X_{eq}\times Y_{eq}$.
\end{defn}

\begin{prop}\label{QD+WD=D}
Let $(X,T)$ and $(Y,S)$ be minimal systems. Then $X\perp Y$ if and only if $X\perp_{Q} Y$ and $X\curlywedge Y$.
\end{prop}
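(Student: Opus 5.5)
The forward direction is immediate from the definitions. Suppose $X\perp Y$. Then $X\times Y$ is the only joining of all, so in particular it is the only one projecting onto $X_{eq}\times Y_{eq}$, which gives $X\perp_{Q} Y$. For weak disjointness, pick any point $(x,y)\in X\times Y$ and let $J=\overline{orb_{T\times S}(\overline{orb_{T}(x)}\times\{y\})}$; more simply, let $J$ be the orbit closure of a minimal point $(x,y)$ of $X\times Y$. Since $X$ and $Y$ are minimal, $J$ projects onto both coordinates, so $J$ is a joining. Hence $J=X\times Y$, so $X\times Y$ is minimal and in particular transitive, i.e. $X\curlywedge Y$.

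For the converse, assume $X\perp_{Q} Y$ and $X\curlywedge Y$, and let $J$ be any joining. By quasi-disjointness it suffices to show that $(\pi_X\times\pi_Y)(J)=X_{eq}\times Y_{eq}$. Since $X\times Y$ is transitive and $\pi_X\times\pi_Y$ is onto, the product $X_{eq}\times Y_{eq}$ is transitive. It is also equicontinuous, since it is a product of equicontinuous systems. A transitive equicontinuous system is minimal, because every point is a minimal point and the orbit closure of a transitive point is everything. So $X_{eq}\times Y_{eq}$ is minimal.

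Now $(\pi_X\times\pi_Y)(J)$ is a nonempty closed invariant subset of the minimal system $X_{eq}\times Y_{eq}$, hence equals it. Quasi-disjointness then forces $J=X\times Y$, so $X\perp Y$.

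The only point needing care is the standard fact that a transitive equicontinuous system is minimal. To justify it, recall that in an equicontinuous system the proximal relation is trivial, so every point is distal and hence minimal. Since the system has a dense orbit, its orbit closure, which is minimal, is the whole space. I do not expect any real obstacle here.
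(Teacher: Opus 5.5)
Your proof is correct and follows the paper's argument. For the converse, the paper says only that $X\curlywedge Y$ gives $X_{eq}\perp Y_{eq}$, so every joining projects onto $X_{eq}\times Y_{eq}$ and quasi-disjointness finishes the proof; you supply the details behind that step (a transitive equicontinuous product is minimal), as well as the forward direction, which the paper calls clear.
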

\begin{proof}
It is clear that $X\perp Y$ implies that $X\perp_{Q} Y$ and $X\curlywedge Y$. Now suppose that $X\perp_{Q} Y$ and $X\curlywedge Y$. Then $X_{eq}\perp Y_{eq}$. Thus any joining $J$ of $X$ and $Y$ projects onto $X_{eq}\times Y_{eq}$. Since $X\perp_{Q} Y$, we have $J=X\times Y$ and hence  $X\perp Y$.
\end{proof}

\subsection{Hyperspaces}
Let $X$ be a compact metric space. Let $2^X$ be the collection of nonempty closed subsets of $X$. One may define a metric on $2^X$ as follows:
\begin{equation*}
 H(A,B) = \inf \{\ep>0: A\subseteq B_\ep(B), B\subseteq B_\ep(A)\}
\end{equation*}
where $B_\ep (A)=\{x\in X: \rho(x, A)<\ep\}$.
The metric $H$ is called the {\em Hausdorff metric} of $2^X$, and $2^X$ is called the {\em hyperspace} of $X$.

Let $\{A_i\}_{i=1}^\infty$ be an arbitrary sequence of subsets of $X$. Define
$$\liminf A_i=\{x\in X: \text{for any neighbourhood $U$ of $x$, $U\cap A_i\neq \emptyset$ for all but finitely many $i$}\};$$
$$\limsup A_i=\{x\in X: \text{for any neighbourhood $U$ of $x$, $U\cap A_i\neq \emptyset$ for infinitely many $i$}\}.$$
We say that $\{A_i\}_{i=1}^\infty$ converges to $A$, denoted by $\lim_{i\to \infty} A_i=A$, if
$$\liminf A_i=\limsup A_i=A.$$
Now let $\{A_i\}_{i=1}^\infty\subseteq 2^X$ and $A\in 2^X$. Then $\lim_{i\to\infty} A_i=A$ if and only if $\{A_i\}_{i=1}^\infty $ converges to $A$ in $2^X$ with respect to the Hausdorff metric.

Let $X,Y$ be two compact metric spaces. Let $F: Y\rightarrow 2^X$ be a map and $y\in Y$.
We say that $F$ is {\em upper semi-continuous (u.s.c.)} at $y$ if whenever $\lim y_i=y$, one has that $\limsup F(y_i)\subseteq F(y)$.
We say $F$ is {\em lower semi-continuous (l.s.c.)} at $y$ if whenever $\lim y_i=y$, one has that $\liminf F(y_i)\supset F(y)$. If $F$ is u.s.c. (l.s.c.) at every point of $Y$, then we say that $F$ is u.s.c. (l.s.c.).

 It is easy to verify that $F: Y\rightarrow 2^X$ is u.s.c. at $y\in Y$ if and only if for each $\ep>0$ there exists a neighbourhood $U$ of $y$ such that $F(U)\subseteq B_\ep(F(y))$; and $F: Y\rightarrow 2^X$ is l.s.c. at $y\in Y$ if and only if for each $\ep>0$ there exists a neighbourhood $U$ of $y$ such that $F(y)\subseteq B_\ep(F(y'))$ for all $y'\in U$.

 \section{Proof of Theorem \ref{main1}}
\subsection{Maximal common equicontinuous factor}\label{sub-MCEF}
\begin{defn}\label{def-MCEQ}
Let $(X,T)$ and $(Y,T)$ be minimal systems. A system $(Z,R)$ is a {\it maximal common equicontinuous factor} of $X$ and $Y$ if  there are homomorphisms $\alpha: X\rightarrow Z$ and $ \beta: Y\rightarrow Z$ such that if there is an equicontinuous system $(W,S)$ and homomorphisms $\phi: X\rightarrow W$ and $\psi: Y\rightarrow W$, then there are homomorphisms $\eta_{i}: Z\rightarrow W, i=1,2$ such that  $\phi=\eta_1\alpha$ and $\psi=\eta_2\beta$.
\[
\begin{tikzcd}
X \arrow[rd, "\alpha"] \arrow[rdd,"\phi"']  & & Y \arrow[ld, "\beta" '] \arrow[ldd,"\psi"] \\
 & Z\arrow[d,"\eta_1" ', "\eta_2"] &  \\
 & W &
\end{tikzcd}
\]

\end{defn}
To show the existence of the maximal common equicontinuous factor, we recall another description of maximal equicontinuous factor. Let $(X,T)$ be a minimal system. A continuous function $f$ on $X$ with $|f|=1$ is an {\it eigenfunction} of $T$ if there is a nonzero $\lambda\in\mathbb{C}$ such that $f(Tx)=\lambda f(x)$ for every $x\in X$. In this case, $\lambda$ is called an {\it eigenvalue} of $T$. Now let $\Lambda$ be the collection of eigenvalues of $T$. For each $\lambda\in \Lambda$, let $f_{\lambda}$ be the corresponding eigenfunction. Let $\widehat{\Lambda}$ be the Pontrjagin dual of $\Lambda$. Further, let $\theta$ be the inclusion map from $\Lambda$ to the unit circle $\mathbb{S}^{1}$. Then $\theta$ is a character of $\Lambda$ and we identify it with an element of $\widehat{\Lambda}$. Note that for each $x\in X$, $\Lambda\rightarrow \mathbb{S}^{1}, \lambda\mapsto f_{\lambda}(x)$ is a character of $\Lambda$. One can verify that
\[\pi: X\rightarrow \widehat{\Lambda},\ x\mapsto f_{\lambda}(x)\]
is a factor map between $(X, T)$ and $(\widehat{\Lambda}, R_{\theta})$, where $R_{\theta}: \widehat{\Lambda}\rightarrow\widehat{\Lambda}, z\mapsto z+\theta$. Moreover, $(\widehat{\Lambda}, R_{\theta})$ is isomorphic to the maximal equicontinuous factor of $(X,T)$ (See \cite[Chapter 3]{HK18} for more details).

\begin{lem}\label{ex of MCEQ}
Let $(X,T)$ and $(Y,S)$ be minimal systems. Then there exists a unique maximal common equicontinuous factor up to isomorphisms.
\end{lem}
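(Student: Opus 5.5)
Existence comes from eigenvalues, uniqueness from the universal property.

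\emph{Existence.} Let $\Lambda_X$ and $\Lambda_Y$ be the groups of continuous eigenvalues of $(X,T)$ and $(Y,S)$, viewed as subgroups of $\mathbb{S}^1$. Then $\Lambda:=\Lambda_X\cap\Lambda_Y$ is a subgroup of $\mathbb{S}^1$. Put $Z=\widehat{\Lambda}$ with the rotation $R_\theta$, where $\theta$ is the inclusion character $\Lambda\to\mathbb{S}^1$.
\begin{itemize}
\item Since $\Lambda\subset\Lambda_X$, restricting characters gives a continuous surjective homomorphism $r_X:\widehat{\Lambda_X}\to\widehat{\Lambda}$. Surjectivity holds because characters of a subgroup of a discrete abelian group extend to the whole group.
\item $r_X$ maps $\theta_X$ to $\theta$, so it intertwines the two rotations.
\item Set $\alpha=r_X\circ\pi_X$, with $\pi_X:X\to\widehat{\Lambda_X}$ the map $x\mapsto(f_\lambda(x))_\lambda$ described before the lemma. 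Explicitly, $\alpha(x)$ is the character $\lambda\mapsto f_\lambda(x)$ for $\lambda\in\Lambda$.
\item Define $\beta$ on $Y$ the same way.
\end{itemize}

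\emph{Universal property.} Let $(W,S)$ be equicontinuous with factor maps $\phi:X\to W$ and $\psi:Y\to W$. Then $W$ is a factor of a minimal system, hence minimal, and a minimal equicontinuous system is a rotation on a compact monothetic group.
\begin{itemize}
\item Every eigenvalue $\mu$ of $W$, with eigenfunction $g$, is an eigenvalue of both $X$ (via $g\circ\phi$) and $Y$ (via $g\circ\psi$). So $\Lambda_W\subset\Lambda$.
\item The eigenfunctions of a minimal rotation separate points, so $W\cong\widehat{\Lambda_W}$ with rotation by the inclusion character.
\item Take $\eta_1=\eta_2$ to be the restriction map $\widehat{\Lambda}\to\widehat{\Lambda_W}$. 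It is a factor map for the rotations.
\end{itemize}

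The main obstacle is the normalization of eigenfunctions. For each $\mu\in\Lambda_W$, the functions $g_\mu\circ\phi$ and $f_\mu$ are both eigenfunctions of the minimal system $X$ for the same eigenvalue. Their ratio is therefore a continuous invariant function, hence a constant $c_\mu$ of modulus one. So $\phi$ equals $\eta_1\circ\alpha$ up to composing with a fixed group translation. Translations commute with the rotation, so we absorb the translation into $\eta_1$. Doing the same for $\psi$ produces $\eta_2$, which may differ from $\eta_1$ by a translation. This is exactly why the definition allows two maps $\eta_1,\eta_2$.

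\emph{Uniqueness.} Suppose $(Z,\alpha,\beta)$ and $(Z',\alpha',\beta')$ both have the universal property. Applying it in each direction gives homomorphisms $\eta:Z\to Z'$ and $\eta':Z'\to Z$, both onto since they are homomorphisms of minimal systems. To make them mutually inverse, compare eigenvalue groups instead of composing maps. The eigenvalues of $Z$ and of $Z'$ both lie in $\Lambda_X\cap\Lambda_Y$. Conversely, the construction above shows every element of $\Lambda_X\cap\Lambda_Y$ is realized on any $Z$ with the universal property: apply the property to the circle rotation by $\lambda$ with the two eigenfunctions. Hence $Z$ and $Z'$ are minimal rotations with the same eigenvalue group $\Lambda$. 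By the Halmos–von Neumann theorem, each is isomorphic to $(\widehat{\Lambda},R_\theta)$, so they are isomorphic.
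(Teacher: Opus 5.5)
Your existence and universal-property arguments follow the paper's route. You intersect the eigenvalue groups and obtain $\alpha,\beta$ through the dual of the inclusion $\Lambda\subset\Lambda_X$ (resp.\ $\Lambda_Y$). You then absorb a translation separately into $\eta_1$ and $\eta_2$, which is the paper's $\eta_1(\chi)=\eta(\chi)-\eta(\alpha(x_0))+\phi(x_0)$.

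You diverge from the paper on uniqueness. The paper calls it clear from the definition, and it is: the composition argument you set aside works if you pair the two $X$-side maps. Take $\eta_1:Z\to Z'$ with $\alpha'=\eta_1\alpha$ and $\eta_1':Z'\to Z$ with $\alpha=\eta_1'\alpha'$. Then $\eta_1'\eta_1\alpha=\alpha$, so $\eta_1'\eta_1=\id_Z$ because $\alpha$ is onto. Symmetrically $\eta_1\eta_1'=\id_{Z'}$. The possible mismatch between $\eta_1$ and $\eta_2$ never enters.

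Your route through eigenvalue groups and the Halmos--von Neumann classification is also correct. It gives slightly more: any maximal common equicontinuous factor has eigenvalue group exactly $\Lambda_X\cap\Lambda_Y$. The cost is an external classification theorem in place of a two-line formal argument.

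One small detail in your route concerns $\lambda$ a root of unity. Then $f_\lambda$ does not map onto the circle, so it is not a factor map there. Test instead against multiplication by $\lambda$ on the closed subgroup $\overline{\{\lambda^n:n\in\Z\}}$, with $f_\lambda$ and $g_\lambda$ normalized to equal $1$ at base points, so both are onto by minimality. Both arguments also tacitly use that $Z$ itself is equicontinuous, which the definition leaves implicit.
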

\begin{proof}
It is clear from the definition that any two maximal common equicontinuous factors are isomorphic. Thus it suffices to show the existence.

Let $\Lambda_{X}, \Lambda_{Y}$ be the sets of eigenvalues of $(X,T)$ and $(Y,S)$, respectively. The maximal equicontinuous factors $(\widehat{\Lambda_{X}}, R_{\theta_1}), (\widehat{\Lambda_{Y}}, R_{\theta_2})$ of $(X,T)$ and $(Y,S)$, where
$\alpha: \Lambda_{X}\rightarrow \mathbb{S}^{1}$ and  $\beta: \Lambda_{Y}\rightarrow \mathbb{S}^{1}$ are inclusions.  Let $\Lambda=\Lambda_{X}\cap\Lambda_{Y}$ and let $\theta: \Lambda\rightarrow \mathbb{S}^{1}$ be the inclusion. Then $(\widehat{\Lambda}, R_{\theta})$ is a common factor of $(\widehat{\Lambda_{X}}, R_{\theta_1}), (\widehat{\Lambda_{Y}}, R_{\theta_2})$ and hence also a common factor of $(X,T)$ and $(Y,S)$. For $\lambda\in \Lambda$, we let $f_{\lambda}$ and $g_{\lambda}$ be the eigenfunctions corresponding to $\lambda$ of $T$ and $S$, respectively. Then
\[\alpha: X\rightarrow \widehat{\Lambda}, x\mapsto f_{\lambda}(x) \ \ \text{and}\ \ \beta: Y\rightarrow \widehat{\Lambda}, y\mapsto g_{\lambda}(y)\]
are homomorphisms.

We claim that $(\widehat{\Lambda}, R_{\theta})$ is a maximal common equicontinuous factor of $(X,T)$ and $(Y,S)$. Suppose that $(W,R)$ is an equicontinuous system and there are homomorphisms $\phi: X\rightarrow W$ and $\psi: Y\rightarrow W$. Let $\Gamma$ be the set of eigenvalues of $(W,R)$. Since $(W,R)$ is minimal and equicontinuous, $W\cong \widehat{\Gamma}$. Further, $\Gamma$ is a subgroup of $\Lambda$ since $W$ is a common factor of $X$ and $Y$. By the Pontrjagin dual, there are group homomorphisms $\eta: \widehat{\Lambda}\rightarrow \widehat{\Gamma}$. Take $x_0\in X$ and define
\[\eta_1: \widehat{\Lambda}\rightarrow \widehat{\Gamma}, \chi\mapsto \eta(\chi)-\eta(f_{\lambda}(x_0))+\phi(x_0).\]
Then $\eta_1$ is a homomorphism between $(\widehat{\Lambda}, R_{\theta})$ and $(W=\widehat{\Gamma}, R)$. It follows from the minimality that $\eta_1\alpha=\phi$. In the similar way, we can define $\eta_2: \widehat{\Lambda}\rightarrow \widehat{\Gamma}$ which satisfies  $\eta_2\beta=\psi$. This completes the proof.
\end{proof}

 By Lemma \ref{ex of MCEQ}, we may use $Eq(X,Y)$ to denote the maximal common equicontinuous factor  of minimal systems  $(X,T)$ and $(Y,S)$. Further, let  $\alpha: X\rightarrow Eq(X,Y)$ and $\beta: Y\rightarrow Eq(X,Y)$ be the homomorphisms. The map $\gamma: X\times Y\rightarrow Eq(X,Y)$ is defined by
 \[ \gamma(x,y)=\alpha(x)-\beta(y).\]


\begin{lem}\label{semiopen}
The map $\gamma: X\times Y\rightarrow Eq(X,Y)$ is semiopen.
\end{lem}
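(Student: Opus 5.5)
Write $Z=Eq(X,Y)$ and factor $\gamma$ as
\[X\times Y\xrightarrow{\alpha\times\beta} Z\times Z\xrightarrow{\delta} Z,\qquad \delta(z,w)=z-w .\]
The idea is that semiopenness passes to a composition of a semiopen map with an open map. Indeed, if $\phi$ is semiopen and $\psi$ is open, then for any nonempty open $U$ the set $\phi(U)$ contains a nonempty open set $V$. Hence $\psi(\phi(U))\supset\psi(V)$, which is open and nonempty. So it suffices to show two things: that $\alpha\times\beta$ is semiopen, and that $\delta$ is open.

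\emph{Openness of $\delta$.} Here $Z$ is a compact metrizable abelian group; it is the dual $\widehat{\Lambda}$ from Lemma \ref{ex of MCEQ}. For open $A,B\subset Z$ we have
\[\delta(A\times B)=A-B=\bigcup_{b\in B}(A-b),\]
and this is a union of translates of an open set, hence open. Products $A\times B$ form a base of $Z\times Z$, so $\delta$ is open.

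\emph{Semiopenness of $\alpha\times\beta$.} By Lemma \ref{semiopen of factor}, $\alpha:X\to Z$ and $\beta:Y\to Z$ are semiopen, since both are factor maps between minimal systems. Let $W\subset X\times Y$ be nonempty open, and choose a basic box $U\times V\subset W$. Then $\alpha(U)$ contains a nonempty open set $A$, and $\beta(V)$ contains a nonempty open set $B$. Therefore
\[(\alpha\times\beta)(W)\supset\alpha(U)\times\beta(V)\supset A\times B,\]
which has nonempty interior.

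Combining the two steps with the composition remark gives that $\gamma=\delta\circ(\alpha\times\beta)$ is semiopen. There is no real obstacle in this argument. The only point needing care is to use the group structure of $Eq(X,Y)$ rather than minimality of $X\times Y$; the product need not be minimal, so Lemma \ref{semiopen of factor} cannot be applied to $\gamma$ directly.
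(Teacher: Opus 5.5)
Your proof is correct and follows the paper's argument: $\gamma$ is written as the open subtraction map on the compact group $Eq(X,Y)$ composed with a product of semiopen maps. The only difference is that you apply Lemma \ref{semiopen of factor} directly to $\alpha$ and $\beta$. The paper instead factors them as $\phi_X\pi_X$ and $\phi_Y\pi_Y$ and uses that $\phi_X,\phi_Y$ are open (Lemma \ref{dis-open}); your version is a harmless shortcut that avoids that lemma.
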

\begin{proof}
Let $\pi_{X}: X\rightarrow X_{eq}$ and $\pi_{Y}: Y\rightarrow Y_{eq}$ be factor maps. By Lemma \ref{semiopen of factor}, $\pi_X$ and $\pi_Y$ are semiopen and hence $\pi_{X}\times \pi_{Y}: X\times Y\rightarrow X_{eq}\times Y_{eq}$ is semiopen. Recall that $\phi_{X}: X_{eq}\rightarrow Eq(X,Y)$ and $\phi_{Y}: Y_{eq}\rightarrow Eq(X,Y)$ are open (Lemma \ref{dis-open}). Since $Eq(X,Y)$ is a compact group, the map $\sigma: Eq(X,Y)\times Eq(X,Y)\rightarrow Eq(X,Y), (g,h)\mapsto g-h$ is open. Now it follows that the map $\gamma=\sigma\circ (\phi_{X}\times \phi_{Y})\circ (\pi_{X}\times \pi_{Y})$ is semiopen.
\end{proof}

We give a more precise characterization of $Eq(X,Y)$ and the map $\gamma$. Suppose that $(X,T)$ and $(Y,S)$ are minimal equicontinuous. Then $X$ and $Y$ are compact abelian groups. We use $e_X$ and $e_{Y}$ to denote the units of $X$ and $Y$, respectively. Let
\[ H:=\overline{\{(T^{n}e_X, S^{n}e_Y): n\in\mathbb{Z}\}},\]
which is closed subgroup of $X\times Y$.  Define $R: (X\times Y)/H\rightarrow (X\times Y)/H$ by
\[ R((x,y)+H)=(Tx,y)+H=(x,S^{-1}y)+H.\]
We claim that $((X\times Y)/H, R)$ is the maximal common factor of $(X,T)$ and $(Y,S)$. For this, define
\[\alpha: X\rightarrow (X\times Y)/H,\ x\mapsto (x, e_Y)+H\]
and
\[\beta: Y\rightarrow (X\times Y)/H,\ y\mapsto (e_X, -y)+H.\]
Then $\alpha$ and $\beta$ are factor maps from $(X,T)$ and $(Y,S)$ to $((X\times Y)/H,R)$, respectively. Note that $(x,y)+H$ is a minimal set in $X\times Y$ for each $(x,y)\in X\times Y$. Thus $((X\times Y)/H, R)$ is the maximal common factor of $(X,T)$ and $(Y,S)$. Now $\gamma: X\times Y\rightarrow (X\times Y)/H, (x,y)\mapsto (x,y)+H$  is the factor map between $(X\times Y, T\times S)\rightarrow ((X\times Y)/H, \id)$. Moreover,
\[\gamma(x,y)=\alpha(x)-\beta(y)\]
and $\gamma^{-1}((x,y)+H)=(x,y)+H$.

Now suppose that $(X,T)$ and $(Y,S)$ are minimal systems. Let $\pi_{X}: X\rightarrow X_{eq}$ and $\pi_{Y}: Y\rightarrow Y_{eq}$ be the factor maps. Then we use $\phi_{X}: X_{eq}\rightarrow Eq(X,Y)$ and $\phi_{Y}: Y\rightarrow Eq(X,Y)$ to denote the homomorphisms. Further, let $\alpha=\phi_{X}\pi_{X}, \beta=\phi_{Y}\pi_{Y}$ and $\gamma: X\times Y\rightarrow Eq(X,Y), (x,y)\mapsto \alpha(x)-\beta(y)$.

\[
\begin{tikzcd}
  X\arrow[r,"\pi_{X}"]\arrow[rr, bend left=30, "\alpha"] & X_{eq}\arrow[r, "\phi_{X}"] & Eq(X,Y)& Y_{eq}\arrow[l, "\phi_{Y}"'] & Y\arrow[l, "\pi_{Y}"']\arrow[ll, bend right=30, "\beta"']
\end{tikzcd}
\]

According to the illustration above, we have the following remark.
\begin{rem}\label{rem-MCEQ}
Let $(X,T), (Y,S)$ be minimal systems and $\gamma: X\times Y\rightarrow Eq(X,Y)$ be defined as above. Then for each $z\in Eq(X,Y)$,
\[\gamma^{-1}(z)=(\pi_{X}\times \pi_{Y})^{-1}(\overline{orb_{T\times S}(\pi_{X}(x), \pi_{Y}(y))}),\ \forall (x,y)\in \gamma^{-1}(z),\]
where $\pi_{X}: X\rightarrow X_{eq}$ and $\pi_{Y}: Y\rightarrow Y_{eq}$ are the factor maps.
\end{rem}

\subsection{Residual property}

\begin{lem}\label{1 min set}
Let $\pi: (X,T)\rightarrow (Z,\id)$ be a factor map, where $Z$ consists of fixed points.  Then
\[ \Omega:=\{z\in Z: \pi^{-1}(z) \text{ has a unique minimal subset}\}\]
is a $G_{\delta}$ set of $Z$.
\end{lem}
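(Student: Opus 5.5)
We will show that the complement $Z\setminus\Omega$ is an $F_{\sigma}$ set by writing it as a countable union of closed sets. Since each fiber $\pi^{-1}(z)$ is a nonempty closed invariant subset of the compact space $X$, it contains at least one minimal subset. So $z\notin\Omega$ exactly when $\pi^{-1}(z)$ contains two distinct minimal subsets $M_1,M_2$. Two distinct minimal sets are disjoint, hence at positive distance; thus $z\notin\Omega$ if and only if there is some $n\in\mathbb{N}$ and two minimal subsets $M_1,M_2\subset\pi^{-1}(z)$ with $\rho(M_1,M_2)\ge 1/n$, where $\rho(A,B)=\inf\{\rho(a,b): a\in A,\ b\in B\}$. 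Set
\[F_n=\{z\in Z:\ \pi^{-1}(z)\text{ contains minimal subsets } M_1,M_2 \text{ with } \rho(M_1,M_2)\ge 1/n\}.\]
Then $Z\setminus\Omega=\bigcup_n F_n$, and it suffices to show that each $F_n$ is closed.

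Fix $n$ and take $z_i\in F_n$ with $z_i\to z$, together with witnessing minimal sets $M_1^i,M_2^i\subset\pi^{-1}(z_i)$. Passing to a subsequence in the hyperspace $2^X$, we may assume $M_j^i\to A_j$ for $j=1,2$. Each $A_j$ is closed and invariant, because $T$ acts continuously on $2^X$ and $TM_j^i=M_j^i$. Since $\pi$ is continuous and $\pi(M_j^i)=\{z_i\}$, we get $A_j\subset\pi^{-1}(z)$. Moreover, the distance $\rho(A,B)$ is continuous in the Hausdorff metric, so $\rho(A_1,A_2)\ge 1/n$. Each $A_j$ contains a minimal subset $N_j$, and $\rho(N_1,N_2)\ge\rho(A_1,A_2)\ge 1/n$. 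Hence $z\in F_n$, so $F_n$ is closed and $\Omega=\bigcap_n (Z\setminus F_n)$ is a $G_\delta$ set.

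The one point that needs care is that the limits $A_j$ need not themselves be minimal. This is why the definition of $F_n$ is phrased through a positive lower bound on the distance between the two minimal sets, rather than merely requiring that they be distinct. That lower bound passes to the limit sets $A_j$ and then to the minimal subsets chosen inside them. The hypothesis that $Z$ consists of fixed points is used only to ensure that each fiber $\pi^{-1}(z)$ is invariant.
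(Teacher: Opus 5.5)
Your proof is correct and follows essentially the same route as the paper. Both arguments define the sets of $z$ whose fiber contains two minimal subsets at distance at least $1/n$, show these sets are closed by passing to Hausdorff limits and choosing minimal subsets inside the limit sets, and then take complements. You correctly write $\Omega=\bigcap_n (Z\setminus F_n)$; the paper displays a union $\bigcup_{k}(Z\setminus\Omega_{1/k})$ at that point, which is a typo for this intersection.
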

\begin{proof}
For $\epsilon>0$, let
\[\Omega_{\epsilon}:=\{z\in Z: \exists \text{ minimal subsets }M, M'\subset \pi^{-1}(z) \text{ such that }\rho(M, M')\geq \epsilon\}.\]
We claim that $\Omega_{\epsilon}$ is closed in $Z$ for each $\epsilon>0$. For this, we fix $\epsilon>0$ and take a sequence $(z_n)$ in $\Omega_{\epsilon}$ that converges to $z\in Z$. We need to show that $z\in \Omega_{\epsilon}$. By the definition of $\Omega_{\epsilon}$, there are minimal subsets $M_n, M_n'\subset \pi^{-1}(z_n)$ with  $\rho(M_n, M_n')\geq \epsilon$ for each $n\in\mathbb{N}$.  By passing to some subsequences, we may assume that $M_n\rightarrow M$ and $M_n'\rightarrow M'$ in $2^{X}$ as $n$ tends to $\infty$. Clearly, $M$ and $M'$ are nonempty invariant closed subsets of $\pi^{-1}(z)$. It remains to show that $\rho(M,M')\geq\epsilon$. To this end, we take $x\in M$ and $x'\in M'$. Then there are sequences $x_n\in M_n$ and $x_n'\in M_n'$ such that $x_n\rightarrow x$ and $x_n'\rightarrow x'$. Then
\[\rho(x,x')=\lim_{n\rightarrow \infty}\rho(x_n,x_n')\geq \liminf_{n\rightarrow\infty}\rho(M_n,M_n')\geq \epsilon.\]
Since $x$ and $x'$ are chosen arbitrarily, we conclude that $\rho(M,M')\geq \epsilon$.  Since $M, M$ are invariant closed subsets, there are minimal subsets $N\subset M$ and $N'\subset M'$. Thus $\rho(N,N')\geq \rho(M,M')\geq \epsilon$. This shows that $\Omega_{\epsilon}$ is closed in $Z$.

Now it is clear that
\[\Omega=\bigcup_{k=1}^{\infty} (Z\setminus \Omega_{1/k}).\]
Thus $\Omega$ is a $G_{\delta}$ subset of $Z$.
\end{proof}

\begin{lem}\label{residual}
Let $\pi: (X,T)\rightarrow (Z,\id)$ be a factor map, where $Z$ consists of fixed points.  Then
\[ \Omega:=\{z\in Z: \pi^{-1}(z) \text{ is a minimal set}\}\]
is a $G_{\delta}$ set of $Z$.
\end{lem}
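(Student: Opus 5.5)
We follow the template of Lemma \ref{1 min set} and write the complement of $\Omega$ as a countable union of closed sets. For $\epsilon>0$, let $\Theta_{\epsilon}$ be the set of $z\in Z$ for which the fiber $\pi^{-1}(z)$ contains some minimal subset $M$ and some point $x$ with $\rho(x,M)\geq\epsilon$. The fiber $\pi^{-1}(z)$ is a nonempty closed invariant set, so it always contains a minimal subset. Hence $\pi^{-1}(z)$ fails to be minimal exactly when it contains a minimal subset $M\subsetneq\pi^{-1}(z)$. Since $M$ is closed, this happens exactly when some point of the fiber lies at positive distance from $M$. Therefore
\[Z\setminus\Omega=\bigcup_{k=1}^{\infty}\Theta_{1/k},\qquad\text{equivalently}\qquad \Omega=\bigcap_{k=1}^{\infty}(Z\setminus\Theta_{1/k}).\]
It remains to show that each $\Theta_{\epsilon}$ is closed; then $\Omega$ is a $G_{\delta}$ set.

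Take $z_n\in\Theta_{\epsilon}$ with $z_n\to z$, and choose witnesses $M_n\subset\pi^{-1}(z_n)$ minimal and $x_n\in\pi^{-1}(z_n)$ with $\rho(x_n,M_n)\geq\epsilon$. Passing to a subsequence, we may assume $M_n\to M^*$ in $2^X$ and $x_n\to x$. By continuity of $\pi$, both $x$ and $M^*$ lie in $\pi^{-1}(z)$, and $M^*$ is closed and invariant. We check that $\rho(x,M^*)\geq\epsilon$. Any $y\in M^*$ is a limit of points $y_n\in M_n$, so
\[\rho(x,y)=\lim_{n\to\infty}\rho(x_n,y_n)\geq\liminf_{n\to\infty}\rho(x_n,M_n)\geq\epsilon .\]
Now choose a minimal subset $M\subset M^*$. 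Then $\rho(x,M)\geq\rho(x,M^*)\geq\epsilon$, so $z\in\Theta_{\epsilon}$.

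The key point is to encode non-minimality through a point far from a minimal subset, rather than through the statement ``$M\neq\pi^{-1}(z)$''. The point-to-set distance is what passes to limits: the Hausdorff limit $M^*$ of minimal sets need not be minimal, but shrinking to a minimal subset inside it only increases $\rho(x,\cdot)$. With that encoding, closedness follows from compactness exactly as in Lemma \ref{1 min set}.
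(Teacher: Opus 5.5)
Your proof is correct and follows the same strategy as the paper. Both arguments write $Z\setminus\Omega$ as a countable union of closed sets and prove closedness by compactness of $2^X$, using that Hausdorff limits of closed invariant subsets of $\pi^{-1}(z_n)$ are closed invariant subsets of $\pi^{-1}(z)$. The only difference is how non-minimality is encoded: the paper uses $\diam_H(\mathcal{M}_z)\geq 1/k$, where $\mathcal{M}_z$ is the family of all nonempty closed invariant subsets of the fiber. There the inequality $H(M_n,M_n')\geq 1/k$ passes directly to the limit, with no need to shrink to a minimal set. Your point-to-minimal-set distance needs that extra step, which you handle correctly, in the style of Lemma \ref{1 min set}. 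Your formula $\Omega=\bigcap_{k}(Z\setminus\Theta_{1/k})$ is right; the paper's displayed $\bigcup$ is a typo.
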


\begin{proof}
For each $z\in Z$, set
\[\mathcal{M}_{z}:=\{ E: E \text{ is a nonempty  invariant closed subset of } \pi^{-1}(z)\}.\]
Let $H$ be the Hausdorff metric on $2^{X}$. Then $\pi^{-1}(z)$ is minimal if and only if $\mathcal{M}_{z}=\{\pi^{-1}(z)\}$ if and only if $\diam_{H}(\mathcal{M}_{z})=0$. Now for each $k\in\mathbb{N}$, let
\[\Omega_{k}:=\{z\in Z: \diam_{H}(\mathcal{M}_{z})\geq 1/k\}.\]
We claim that $\Omega_k$ is closed in $Z$ for each $k\in\mathbb{N}$. For this, we fix $k\in\mathbb{N}$. Take a sequence $(z_n)$ in $\Omega_k$ that converges to some point $z\in Z$. Since $z_n\in\Omega_k$, there are nonempty invariant closed subsets $M_n,M_n'\subset \pi^{-1}(z_n)$ such that $H(M_n,M_n')\geq 1/k$. By passing to some subsequences, we may assume that $M_n\rightarrow M, M_n'\rightarrow M'$ in $2^X$. Then it clear that $M, M'\subset \pi^{-1}(z)$, since $z_n\rightarrow z$. Moreover, $H(M,M')\geq 1/k$. Thus $z\in \Omega_{k}$. This shows that $\Omega_{k}$ is closed in $Z$.

Clearly, $$\Omega=\bigcup_{k=1}^{\infty}(Z\setminus \Omega_k).$$ Thus $\Omega$ is a $G_{\delta}$ subset of $Z$.
\end{proof}

\begin{lem}\label{2 min sets}
 If a system $(X,T)$ has at least two minimal subsets, then there is some $\epsilon>0$ such that for each $x\in X$, there is a minimal set contained in $X\setminus B(x,\epsilon)$.
\end{lem}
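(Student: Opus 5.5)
Fix two distinct minimal subsets $M_1,M_2$ of $X$. Distinct minimal sets are disjoint, and both are compact. So
\[\delta:=\rho(M_1,M_2)=\min\{\rho(a,b): a\in M_1,\ b\in M_2\}>0.\]
Set $\epsilon:=\delta/2$. We claim that for every $x\in X$, at least one of $M_1$, $M_2$ lies in $X\setminus B(x,\epsilon)$.

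Suppose, for contradiction, that some $x\in X$ satisfies $M_1\cap B(x,\epsilon)\neq\emptyset$ and $M_2\cap B(x,\epsilon)\neq\emptyset$. Choose $a\in M_1$ and $b\in M_2$ with $\rho(a,x)<\epsilon$ and $\rho(b,x)<\epsilon$. The triangle inequality then gives
\[\rho(a,b)<2\epsilon=\delta,\]
which contradicts the definition of $\delta$. Hence for each $x$, one of $M_1,M_2$ is disjoint from $B(x,\epsilon)$, and that set is a minimal set contained in $X\setminus B(x,\epsilon)$.

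The only point to check is that $\delta>0$, i.e.\ that distinct minimal sets are disjoint. This holds because $M_1\cap M_2$ is a closed invariant subset of each; if it were nonempty, minimality would force $M_1\cap M_2=M_1=M_2$. The rest of the argument is the triangle inequality.
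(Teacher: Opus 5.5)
Your proof is correct and is essentially the paper's argument: both take $\delta=\rho(M_1,M_2)>0$ for two distinct minimal sets and use the triangle inequality to show that every ball $B(x,\epsilon)$ misses one of $M_1,M_2$. The only differences are minor. The paper takes $\epsilon=\delta/3$, whose slack would also cover closed balls, while your sharper $\epsilon=\delta/2$ relies on $B(x,\epsilon)$ being open, which matches the paper's usage. You also spell out why $\delta>0$, which the paper leaves implicit.
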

\begin{proof}
By  the assumption, there are two different minimal subsets $M_1$ and $M_2$ in $X$. Let $\delta=\inf\{\rho(x_1,x_2): x_1\in M_1,x_2\in M_2\}>0$ and take $\epsilon=\frac{\delta}{3}$. Then for any $x\in X$,
\[\max(\rho(x, M_1), \rho(x, M_2)) >\epsilon.\]
Thus there is a minimal set contained in $X\setminus B(x,\epsilon)$.
 \end{proof}

\begin{lem}\label{2 min sets implies joining}
Let $\pi: (X,T)\rightarrow (Z,\id)$ be a semiopen factor map, where $Z$ consists of fixed points.   If for each $z\in Z$, $\pi^{-1}(z)$ has at least two minimal subsets, then there is an invariant closed proper subset $E\subsetneq X$ such that $\pi(E)=Z$.
\end{lem}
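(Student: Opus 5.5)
Fix $\epsilon>0$ as in Lemma \ref{2 min sets}, uniform in $z$, and build $E$ by removing from $X$ an invariant open set that meets every fibre but does not cover any.

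\textbf{Step 1 (uniform $\epsilon$ on a nonmeagre closed set).} For $z\in Z$ let $\epsilon(z)=\sup\{\rho(M,M'): M,M'\subset\pi^{-1}(z)\text{ minimal}\}$, which is $>0$ by hypothesis. Put
\[F_k=\{z:\exists\text{ minimal }M,M'\subset\pi^{-1}(z),\ \rho(M,M')\ge 1/k\}.\]
The limit argument in the proof of Lemma \ref{1 min set} shows each $F_k$ is closed, and $Z=\bigcup_k F_k$. By Baire, some $F_k$ has nonempty interior $W\subset Z$. Set $\epsilon=1/(3k)$. For every $z\in W$, Lemma \ref{2 min sets} applied to $\pi^{-1}(z)$ gives: for each $x\in\pi^{-1}(z)$ there is a minimal set in $\pi^{-1}(z)\setminus B(x,\epsilon)$.

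\textbf{Step 2 (choose the removed set).} Pick a point $x_0$ with $\pi(x_0)\in W$ and let $U=B(x_0,\epsilon/2)\cap\pi^{-1}(W)$, which is open and nonempty. Let $O=\bigcup_n T^nU$; it is open and invariant, and $E:=X\setminus O$ is closed, invariant, and proper. Since $Z$ consists of fixed points, $\pi(O)=\pi(U)\subset W$. Hence for $z\notin W$ the fibre lies entirely in $E$. For $z\in W$, Step 1 with $x=x_0$ gives a minimal $M\subset\pi^{-1}(z)$ with $M\cap B(x_0,\epsilon)=\emptyset$. Then $M\cap U=\emptyset$, and by invariance of $M$ also $M\cap O=\emptyset$, so $M\subset E$. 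Thus $\pi(E)=Z$.

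\textbf{Main obstacle.} Step 2 uses only a single center $x_0$, which suffices and makes the semiopenness hypothesis seem unnecessary. I would double-check two points: that Step 1's closedness argument is valid (limits of minimal sets are invariant and contain minimal sets at distance $\ge 1/k$), and that $\rho(M,M')\ge 1/k$ in Lemma \ref{2 min sets} yields the radius $1/(3k)$. If $\pi(x_0)$ and $W$ cause trouble, semiopenness guarantees that $\pi^{-1}(W)$ is a large open set. In any case the argument above appears complete without it.
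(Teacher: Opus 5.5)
Your argument is correct and takes a different, shorter route than the paper.

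The paper works with the sets $\Omega_\epsilon$ of those $z$ for which every $x\in\pi^{-1}(z)$ admits a minimal set in $\pi^{-1}(z)\setminus B(x,\epsilon)$. These need not be closed, so it first proves $\overline{\Omega_\epsilon}\subset\Omega_{\epsilon/2}$ by a two-case argument before applying Baire. Its separation property is only available at points of the fibre, so it then uses semiopenness to find an open $U\subset Z$ over which every fibre meets $B(x_0,\delta)$, and chooses a point $x_z$ in that intersection. It takes $E$ to be the closure of the chosen minimal sets over $U$ together with all fibres over $Z\setminus U$.

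You instead apply Baire to the sets $F_k$ of Lemma \ref{1 min set}, which are already closed. You define $E$ as the complement of the invariant open set $\bigcup_n T^nU$, which makes closedness, invariance and properness automatic, and $\pi(O)=\pi(U)\subset W$ disposes of the fibres outside $W$. What this buys is that semiopenness is never used, so the lemma holds for any factor map onto a space of fixed points, as you suspected. In the paper this makes no difference, since the lemma is only applied to $\gamma$, which is semiopen by Lemma \ref{semiopen}.

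One step should be restated. In Step 2 you apply Step 1 ``with $x=x_0$'', but Step 1 as written, like Lemma \ref{2 min sets}, only concerns $x\in\pi^{-1}(z)$, and $x_0\notin\pi^{-1}(z)$ when $z\neq\pi(x_0)$. The version you need holds for every $x\in X$: if $M,M'\subset\pi^{-1}(z)$ are minimal with $\rho(M,M')\ge 1/k$, the triangle inequality gives $\rho(x,M)+\rho(x,M')\ge 1/k$, so one of them misses $B(x,1/(2k))$. This also answers your radius question: $1/(2k)$ already works.
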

\begin{proof}
For each $\epsilon>0$, let
\[\Omega_{\epsilon}:=\{z\in Z:  \forall x\in \pi^{-1}(z), \exists \text{ a subsystem } M_{x}\subset \pi^{-1}(z)\setminus B(x,\epsilon)\}.\]
Clearly, $\Omega_{\epsilon_2}\subset\Omega_{\epsilon_1}$ for any $0<\epsilon_1\leq \epsilon_2$. According to Lemma \ref{2 min sets}, one has
\begin{equation}\label{eq1}
 Z=\bigcup_{\epsilon>0}\Omega_{\epsilon}=\bigcup_{k=1}^{\infty}\Omega_{1/k}.
 \end{equation}

First, we claim that $\overline{\Omega_{\epsilon}}\subset \Omega_{\epsilon/2}$, for each $\epsilon>0$. For this, we fix $\epsilon>0$ and take a sequence $(z_n)$ in $\Omega_{\epsilon}$ that converges to $z\in Z$.  Further, we may assume that $(\pi^{-1}(z_n))$  converges in $2^{X}$, saying $M=\lim_{n\rightarrow \infty}\pi^{-1}(z_n)$. Clearly, $M\subset \pi^{-1}(z)$. We will show that $z\in \Omega_{\epsilon/2}$.

Take $x\in \pi^{-1}(z)$ and we divide it into two cases.

\noindent{\bf Case 1}. $x\in M$.  Then there is $x_n\in \pi^{-1}(z_n)$ such that $x_n\rightarrow x$. Fix a $\delta>0$. Then there is some $N\in\mathbb{N}$ such that $x_n\subset B(x, \delta)$ for any $n\geq N$. Then
\[  B(x, \epsilon-\delta)\subset B(x_n, \epsilon),\ \ \forall n\geq N.\]
Since $z_n\in \Omega_{\epsilon}$, there is some nonempty invariant closed subset $M_n\subset \pi^{-1}(z_n)\setminus B(x_n, \epsilon)$ for each $n\in \mathbb{N}$. In particular,
\[ M_n\subset X\setminus B(x, \epsilon-\delta),\ \forall n\geq N.\]
Thus $M':=\limsup_{n\rightarrow \infty} M_n\subset X\setminus B(x, \epsilon-\delta)$. It is clear that $M'$ is an invariant closed subset that is contained in $\pi^{-1}(z)$.  Since $\delta>0$ is arbitrary, we conclude that there is a nonempty invariant closed subset $M_x$ that is contained in $\pi^{-1}(z)\setminus B(x,\epsilon)$.

\noindent{\bf Case 2}. $x\in \pi^{-1}(z)\setminus M$. If $B(x,\epsilon/2)\cap M=\emptyset$, then $M$ is an invariant closed subset satisfying $M\subset \pi^{-1}(z)\setminus B(x,\epsilon/2)$. If $B(x,\epsilon/2)\cap M\neq\emptyset$, then we take $x'\in B(x,\epsilon/2)\cap M$ and hence
\[B(x,\epsilon/2)\subset B(x',\epsilon).\]
Then it follows from Case 1 that there is some nonempty invariant closed subset $M''\subset \pi^{-1}(z)\setminus B(x', \epsilon)$. In particular, $M''\subset \pi^{-1}(z)\setminus B(x, \epsilon/2)$.

According to both cases above, we conclude that $z\in \Omega_{\epsilon/2}$ and hence $\overline{\Omega_{\epsilon}}\subset \Omega_{\epsilon/2}$.

Now it follows from (\ref{eq1}) and Baire's  Category theorem that there is some $k\in\mathbb{N}$ such that $\overline{\Omega_{1/k}}$ has a nonempty interior. Further, by the claim above, the interior of $\Omega_{1/2k}$ is not empty. Thus there is a nonempty open set $V$ contained in $\Omega_{1/2k}$.

Finally, we can construct the desired invariant closed set $E$ of $X$ that projects onto $Z$. Take some $z_0\in V$ and $x_0\in \pi^{-1}(z)$. Then there is some $\delta\in(0,1/4k)$ such that  $\pi(B(x_0, \delta))\subset V$. Since $\pi$ is semiopen, the interior of $\pi(B(x_0, \delta))$ in $Z$ is not empty. Let $U$ denote the interior of $\pi(B(x_0, \delta))$ in $Z$ and let $W=\pi^{-1}(U)\cap B(x_0,\delta)$. For each $z\in U$, we take some $x_{z}\in W\cap \pi^{-1}(z)$. Then there is some nonempty invariant closed subset $M_{z}$ satisfying
\[M_z\subset \pi^{-1}(z)\setminus B(x_z, 1/2k)\subset X\setminus W,\]
since $B(x_0,\delta)\subset B(x_z, 1/2k)$. Set
\[ E=\overline{(\bigcup_{z\in U} M_z) \cup (\bigcup_{z\in Z\setminus U}\pi^{-1}(z))}.\]
Clearly, $E$ is an invariant closed subset of $X$ and $\pi(E)=Z$. Further, $E\subsetneq X$ since $B(x_0,\delta)\cap E=\emptyset$.
\end{proof}

\subsection{Proof of Theorem \ref{main1}}

\begin{proof}
Let $\Omega:=\{z\in Eq(X,Y): \gamma^{-1}(z) \text{ has a unique minimal subset}\}$.

(1) $\Rightarrow$ (2) By Lemma \ref{1 min set}, it suffices to show that $\Omega$ is dense in $Z$. By  the assumption, there is some point $z_0\in\Omega$.

Let $R$ be the minimal translation on $Eq(X,Y)$ as the common factor of $X_{eq}$ and $Y_{eq}$. Then $\{R^{k}(z_0): k\in\mathbb{Z}\}$ is dense in $Eq(X,Y)$. We claim that $\{R^{k}(z_0): k\in\mathbb{Z}\}\subset \Omega$. For this, we show that $T^{k}\times \id$ is a conjugation between $\gamma^{-1}(z_0)$ and $\gamma^{-1}(R^{k}z_0)$. For each $(x,y)\in \gamma^{-1}(z_0)$,
\[\gamma(T^kx, y)=\alpha(T^k x)-\beta(y)=R^{k}(\alpha(x))-\beta(y)=R^{k}(\alpha(x)-\beta(y))=R^{k}(z_0) \]
and hence $T^{k}\times \id(\gamma^{-1}(z_0))\subset \gamma^{-1}(R^{k}z_0)$. Similarly, $T^{-k}\times \id(\gamma^{-1}(R^{k}z_0))\subset \gamma^{-1}(z_0)$. Thus $T^{k}\times \id$ is a homeomorphism between $\gamma^{-1}(z_0)$ and $\gamma^{-1}(R^{k}z_0)$. Since $T^{k}\times \id$ commutes with $T\times S$, we conclude that $\gamma^{-1}(z_0)$ and $\gamma^{-1}(R^{k}z_0)$ are conjugate by $T^{k}\times \id$. Thus $\gamma^{-1}(R^kz_0)$ also has a unique minimal subset. This shows that  $\{R^{k}(z_0): k\in\mathbb{Z}\}\subset \Omega$ hence $\Omega$ is dense in $Eq(X,Y)$.

\medskip
(2) $\Rightarrow$ (3) Let $J\in\mathcal{J}(X,Y)$ be a joining of $X$ and $Y$ that projects onto $X_{eq}\times Y_{eq}$. Thus $\gamma(J)=Eq(X,Y)$. Then for each $z\in Eq(X,Y)$, $\gamma^{-1}(z)\cap J$ is a $T\times S$-invariant  nonempty closed subset. Hence there is a minimal subset contained in $\gamma^{-1}(z)\cap J$. For each $z\in \Omega$, let $M_{z}$ be the unique minimal subset contained in $\gamma^{-1}(z)$. Thus
\begin{equation}\label{eq 3.2}
\bigcup_{z\in\Omega}M_{z}\subset J.
\end{equation}

Take $z_0\in \Omega$ and $(x_0,y_0)\in M_{z_0}$. We claim that $\Sigma:=\{(T^{m}x_0, S^{n}x_0): (m,n)\in\mathbb{Z}^{2}\}$ is contained in $J$. Fix $(m,n)\in\mathbb{Z}^{2}$. On the one hand, it is clear that $(T^{m}x_0, S^{n}x_0)$ is a $T\times S$-minimal point.  On the other hand,
\begin{align*}
\gamma(T^mx_0, S^ny_0)&=\alpha(T^m x_0)-\beta(S^ny_0)=R^{m}(\alpha(x_0))-R^{n}\beta(y_0)\\
&=R^{m-n}(\alpha(x_0)-\beta(y_0))=R^{m-n}(z_0).
\end{align*}
According to the proof of (1) $\Rightarrow$ (2), $R^{m-n}(z_0)\in\Omega$. Thus $(T^{m}x_0, S^{n}y_0)\in M_{R^{m-n}z_0}$. Now it follows from (\ref{eq 3.2}) that $\Sigma\subset J$.

Since both $(X,T)$ and $(Y,S)$ are minimal, $\Sigma$ is dense in $X\times Y$. Thus $J=X\times Y$. Hence $X\perp_{Q} Y$.

\medskip
(3) $\Rightarrow$ (1) Let $\pi_X: X\rightarrow X_{eq}$ and $\pi_{Y}: Y\rightarrow Y_{eq}$ be factor maps. Suppose that $\gamma^{-1}(z)$ has at least two minimal subsets for each $z\in Eq(X,Y)$. Applying  Lemma \ref{2 min sets implies joining} to $\gamma: X\times Y\rightarrow Eq(X,Y)$, there is an invariant closed subset $E\subsetneq X\times Y$ such that $\gamma(E)=Eq(X,Y)$. Clearly, $E$ is a joining of $X\times Y$ and $\pi_X\times \pi_Y(E)=X_{eq}\times Y_{eq}$. But this contradicts the quasi-disjointness of $X$ and $Y$. This contradiction implies that there is some point $z\in Eq(X,Y)$ such that $\gamma^{-1}(z)$ has a unique minimal subset.
\end{proof}

In a similar way, we have the following result.
\begin{thm}\label{char of SQ}
Let $(X,T)$ and $(Y,S)$ be minimal systems. Let $\alpha: X\rightarrow Eq(X,Y), \beta: Y\rightarrow Eq(X,Y)$ be the factor maps and $\gamma(x,y)=\alpha(x)-\beta(y)$. Then the following assertions are equivalent:
\begin{enumerate}
\item [(1)] There is some point $z\in Eq(X,Y)$ such that $\gamma^{-1}(z)$ is a  minimal subset.
\item[(2)] There is a dense $G_{\delta}$ subset $\Omega\subset Eq(X,Y)$ such that for each $z\in Z$,   $\gamma^{-1}(z)$ is a minimal subset.
\item[(3)] $X\perp_{SQ} Y$.
\end{enumerate}
\end{thm}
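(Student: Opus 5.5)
The proof runs in parallel with that of Theorem \ref{main1}, using Lemma \ref{residual} in place of Lemma \ref{1 min set}. Recall that $X\perp_{SQ} Y$ means that there is a residual set $\Omega\subset Eq(X,Y)$ such that $\gamma^{-1}(z)$ is minimal for every $z\in\Omega$. We prove (1) $\Rightarrow$ (2) $\Rightarrow$ (3) $\Rightarrow$ (1).

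For (1) $\Rightarrow$ (2), set
\[\Omega:=\{z\in Eq(X,Y): \gamma^{-1}(z) \text{ is a minimal subset}\}.\]
The map $\gamma:(X\times Y,T\times S)\rightarrow (Eq(X,Y),\id)$ is a factor map onto a space of fixed points. Indeed, $\gamma(Tx,Sy)=R\alpha(x)-R\beta(y)=\gamma(x,y)$, so each fiber $\gamma^{-1}(z)$ is closed and invariant. Lemma \ref{residual} therefore shows that $\Omega$ is a $G_\delta$ set, and it remains to show that $\Omega$ is dense.

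Take $z_0\in\Omega$, which exists by (1). Exactly as in the proof of Theorem \ref{main1}, the map $T^k\times\id$ commutes with $T\times S$ and is a homeomorphism from $\gamma^{-1}(z_0)$ onto $\gamma^{-1}(R^kz_0)$. Hence it conjugates the two subsystems, and minimality transfers, so $R^kz_0\in\Omega$ for all $k\in\mathbb{Z}$. Since $R$ is a minimal rotation on $Eq(X,Y)$, the orbit $\{R^kz_0\}$ is dense, and so $\Omega$ is a dense $G_\delta$ set.

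The implication (2) $\Rightarrow$ (3) is immediate, because a dense $G_\delta$ subset of the compact metric space $Eq(X,Y)$ is residual.

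For (3) $\Rightarrow$ (1), let $\Omega$ be the residual set from the definition of $X\perp_{SQ} Y$. Residual sets in a compact metric space are nonempty by Baire's theorem. Any $z\in\Omega$ then witnesses (1).

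There is no real obstacle here. The only point requiring care is checking that each fiber $\gamma^{-1}(z)$ is a closed invariant set, so that Lemma \ref{residual} applies. This holds because $\gamma$ intertwines $T\times S$ with the identity. The conjugacy step is copied verbatim from the proof of Theorem \ref{main1}, and minimality is clearly invariant under conjugacy.
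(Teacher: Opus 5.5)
Your proof is correct and follows the paper's argument. The paper likewise reduces the theorem to (1) $\Rightarrow$ (2), gets the $G_\delta$ property from Lemma \ref{residual}, and gets density from the conjugacy $T^k\times\id$ between $\gamma^{-1}(z_0)$ and $\gamma^{-1}(R^kz_0)$ together with minimality of $R$; your checks that $\gamma$ intertwines $T\times S$ with the identity, and that (2) $\Rightarrow$ (3) $\Rightarrow$ (1), simply spell out steps the paper leaves implicit.
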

\begin{proof}
It suffices to show that (1) implies (2).

 Let $\Omega:=\{z\in Eq(X,Y): \gamma^{-1}(z) \text{ is a minimal subset}\}$ and $R$ be the minimal translation on $Eq(X,Y)$ as the common factor of $X_{eq}$ and $Y_{eq}$. By Lemma \ref{residual}, it suffices to show that $\Omega$ is dense in $Z$.  By assumption, there is some $z_0\in \Omega$. According to the proof of (1) $\Rightarrow$ (2) of Theorem \ref{main1}, $\gamma^{-1}(z_0)$ is conjugate to $\gamma^{-1}(R^{k}z_0)$, for each $k\in\mathbb{Z}$.. Thus $\gamma^{-1}(R^{k}z_0)$ is also minimal for each $k\in\mathbb{Z}$. Thus $\{ R^{k}(z_0): k\in\mathbb{Z}\}\subset \Omega$. Clearly, $\{ R^{k}(z_0): k\in\mathbb{Z}\}$ is dense in $Eq(X,Y)$ and hence  $\Omega$ is dense in $Eq(X,Y)$. Therefore, $\Omega$ is a dense $G_{\delta}$ subset of $Eq(X,Y)$.
  \end{proof}

\section{Quasi-disjointness under factors and extensions}
In this section, we show Theorem \ref{main2}, i.e. both quasi-disjointness and strong quasi-disjointness are preserved by taking factors. In addition, quasi-disjointness is preserved by proximal extensions.

\subsection{Quasi-disjointness}
\begin{lem}\cite[Corollary 11]{AG77}\label{distal factor for prox ext}
Suppose that $X'$ is a proximal extension of minimal system $X$ and $X'$ has a distal factor $Y$. Then $Y$ is a factor of $X$.
\end{lem}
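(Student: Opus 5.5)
The plan is to reduce to the structure of the enveloping semigroup and the Ellis group machinery of Auslander--Glasner. We have $\pi: X'\to X$ proximal with $X, X'$ minimal, and $\psi: X'\to Y$ with $Y$ distal. The goal is a map $\theta: X\to Y$ with $\theta\circ\pi=\psi$.

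First, I would show that $\psi$ is constant on the fibres of $\pi$. Take $x_1',x_2'\in X'$ with $\pi(x_1')=\pi(x_2')$. Since $\pi$ is proximal, $(x_1',x_2')$ is a proximal pair. Hence $(\psi(x_1'),\psi(x_2'))$ is proximal in $Y$, because factor maps preserve proximality: if $\rho(T^{n_i}x_1',T^{n_i}x_2')\to 0$, then by uniform continuity $\rho(S^{n_i}\psi x_1',S^{n_i}\psi x_2')\to0$. Since $Y$ is distal, $\psi(x_1')=\psi(x_2')$. This lets us define $\theta(x)=\psi(x')$ for any $x'\in\pi^{-1}(x)$. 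This map is well defined and satisfies $\theta\circ\pi=\psi$ and $\theta\circ T=S\circ\theta$ (using $\pi T=T\pi$, $\psi T=S\psi$), and it is onto because $\psi$ is.

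The remaining point is continuity of $\theta$. Its graph is
\[
\{(\pi(x'),\psi(x')): x'\in X'\}=(\pi\times\psi)(X').
\]
This is the continuous image of a compact space, so it is closed in $X\times Y$. A map between compact metric spaces with closed graph is continuous, so $\theta$ is a homomorphism and $Y$ is a factor of $X$.

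I see no serious obstacle. The proof only uses that proximality is preserved by factor maps, which holds in this paper's definition via a sequence $(n_i)$, and the closed-graph argument. Minimality of $X$ is not even needed beyond $\pi$ being onto.
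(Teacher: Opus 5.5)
Your proof is correct. It takes a different and more elementary route than the paper, which gives no argument of its own and simply cites \cite[Corollary 11]{AG77}.

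You argue directly:
\begin{enumerate}
\item Each fibre of $\pi$ consists of proximal pairs.
\item The map $\psi$ sends proximal pairs to proximal pairs, by uniform continuity and $\psi T^{n}=S^{n}\psi$.
\item Distality of $Y$ therefore forces $\psi$ to be constant on $\pi$-fibres.
\item The induced map $\theta$ is continuous because its graph $(\pi\times\psi)(X')$ is compact.
\end{enumerate}

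This is self-contained and needs only compactness and surjectivity; minimality of $X$ and $X'$ is never used. It also produces the factorization $\theta\circ\pi=\psi$ explicitly. That is exactly what the paper needs in Lemma \ref{QD for prox ext}: applied with $Y=X'_{eq}$, it identifies $X'_{eq}$ with $X_{eq}$ compatibly with the factor maps. Relying on the citation saves writing these few lines, but nothing beyond this absolute statement is used.

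One cosmetic point: delete your opening sentence about reducing to the enveloping semigroup and Ellis group machinery, since your argument never uses it.
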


A subset $A\subset \Z$ is {\it syndetic} if it has bounded gaps, is {\it thick} if there is a sequence $(n_i)\subset \Z$ such that it contains
$\cup_{i=1}^\infty \{n_i+1, \ldots,n_i+i\}.$ It is clear that a syndetic suset and a thick subset have non-empty intersections.
It is classical that the following lemma holds for disjointness (see \cite[Proposition 7.7]{DSY12}).
 
\begin{lem}\label{QD for prox ext}
Let $(X,T)$ and $(Y,S)$ be minimal systems. Suppose that $\phi: (X',T)\rightarrow (X,T) $ is a minimal proximal extension. If $X\perp_{Q} Y$ then $X'\perp_Q Y$.
\end{lem}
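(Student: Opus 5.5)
Let $J'\subset X'\times Y$ be a joining with $(\pi_{X'}\times\pi_Y)(J')=X'_{eq}\times Y_{eq}$; the goal is to show $J'=X'\times Y$. The plan is to push $J'$ down to $X$, use $X\perp_Q Y$ there, and then lift back using proximality.

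First I identify the equicontinuous data. By Lemma \ref{distal factor for prox ext}, the distal (indeed equicontinuous) factor $X'_{eq}$ of $X'$ is already a factor of $X$. Hence $X_{eq}=X'_{eq}$, and $\pi_{X'}=\pi_X\circ\phi$. Consequently $J:=(\phi\times\id)(J')$ is a joining of $X$ and $Y$ with $(\pi_X\times\pi_Y)(J)=X_{eq}\times Y_{eq}$. Since $X\perp_Q Y$, this gives $J=X\times Y$. In particular, for every $x'\in X'$ and every $y\in Y$, some $x''$ with $\phi(x'')=\phi(x')$ satisfies $(x'',y)\in J'$.

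Next I upgrade this fibrewise statement to all of $X'\times Y$. Fix a nonempty open $U\times V\subset X'\times Y$ and a minimal point $x'\in U$. I want $N(x',U)$, which is syndetic, to meet a set of return times that is thick; this is where the syndetic/thick remark in the paper enters. Take $y\in V$ and $x''$ in the fibre of $x'$ with $(x'',y)\in J'$. Since $\phi$ is proximal, $(x',x'')$ is proximal. I would therefore expect the set
\[A=\{n: \rho(T^n x',T^n x'')<\epsilon,\ S^n y\in V'\}\]
to contain a thick-like portion, for suitable $V'$, by choosing $y$ cleverly. A cleaner route is to note that for a proximal pair in a minimal system the set $\{n:\rho(T^nx',T^nx'')<\epsilon\}$ is thick. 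Indeed, one picks $p$ in a minimal left ideal of $\beta\mathbb{Z}$ with $px'=px''$; then closeness persists on long blocks around times approximating $p$.

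Then I intersect. Since $y$ is a minimal point, $N(y,V')$ is syndetic. I need a single $n$ with $T^n x''\in U$ (close to $T^nx'$ with $T^nx'\in U$) and $S^ny\in V$. To arrange this, apply the argument to the minimal point $(x',y)$-type pairs. Concretely, choose $(x'',y)\in J'$ with $x''$ proximal to a point $x'$, and consider the orbit closure of $(x',x'',y)$ in $X'\times X'\times Y$. Using a minimal idempotent $u$ with $ux'=ux''$ and $u(x',y)$ minimal, the point $(ux',ux',uy)$ lies in $\overline{orb(x',x'',y)}$. Projecting to the last two coordinates shows $(ux',uy)\in J'$, and $(ux',uy)$ ranges over a dense set of minimal points of $X'\times Y$, as $x'$ and $y$ vary. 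Hence $J'$ is dense and, being closed, equals $X'\times Y$.

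The main obstacle is the last step: ensuring that as $(x',y)$ varies, the points $(ux',uy)$ with $(x'',y)\in J'$ are dense. I would choose $x'$ first, with $y$ arbitrary so that $(x',y)$ is arbitrary. I would then take $u$ as a minimal idempotent fixing the minimal point $(x',y)$ in $X'\times Y$ (using that products of minimal points are minimal points for some common idempotent, i.e.\ choosing $u$ in the appropriate ideal), and check that $ux''=ux'$. This last equality is where proximality of $\phi$ is used: for minimal $X'$, a proximal fibre pair satisfies $ux''=ux'$ for every minimal idempotent $u$ with $ux'=x'$. This yields $(x',y)\in J'$ directly.
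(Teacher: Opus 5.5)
Your proof is correct in substance once one false assertion is removed. Its skeleton matches the paper's: push $J'$ down by $\phi\times\id$ and use Lemma~\ref{distal factor for prox ext} to get $X'_{eq}=X_{eq}$, hence $(\phi\times\id)(J')=X\times Y$. Then each $(x',y)$ has a fibre-mate $x''\in\phi^{-1}\phi(x')$ with $(x'',y)\in J'$. Show that every $T\times S$-minimal point $(x',y)$ lies in $\overline{orb_{T\times S}(x'',y)}\subset J'$, and conclude by density of minimal points.

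\textbf{The error.} In your last paragraph you take $(x',y)$ arbitrary and get a minimal idempotent fixing it ``using that products of minimal points are minimal points''. That is false. Take $Y=X'$ and let $a\neq b$ be a proximal pair in the minimal system $X'$. Then $a$ and $b$ are minimal points, but $(a,b)$ is not a minimal point of $X'\times X'$: its orbit closure meets the diagonal, and a minimal set meeting the diagonal lies inside it. So you do not obtain $(x',y)\in J'$ for every point ``directly''. You obtain it only for minimal points of $X'\times Y$, and you must finish as the paper does, using density of minimal points in $X'\times Y$. Your penultimate paragraph already gestures at this. To justify density: if $M\subset X'\times Y$ is a minimal set, each $(T^m\times\id)M$ is minimal, and their union is dense because $M$ projects onto $Y$ and $X'$ is minimal.

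\textbf{The key step, compared.} The paper's key step is exactly the syndetic/thick intersection you abandoned.
\begin{itemize}
\item The set $\{n:\rho(T^nx',x')<\ep/2,\ \rho(S^ny,y)<\ep\}$ is syndetic because the \emph{pair} $(x',y)$ is minimal.
\item The set $\{n:\rho(T^nx',T^nx'')<\ep/2\}$ is thick; uniform continuity suffices, and no ultrafilter is needed.
\end{itemize}
No clever choice of $y$ is required. Your attempt to use $N(x',U)$ and $N(y,V)$ separately fails only because two syndetic sets need not meet. The paper's argument uses proximality of the single pair $(x',x'')$ and nothing else.

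Your replacement works, but its central identity needs proof. You take a minimal idempotent $u$ with $u(x',y)=(x',y)$ and use $ux''=ux'$. This identity is a property of the proximal \emph{extension}, not of a proximal pair: for an arbitrary proximal pair $(a,b)$ and a minimal idempotent $u$ with $ua=a$, one can have $ub\neq a$. The proof goes as follows.
\begin{itemize}
\item Since $\phi(ux'')=u\phi(x'')=u\phi(x')=\phi(ux')$, the pair $(ux',ux'')$ is again a fibre pair of $\phi$, hence proximal.
\item It is a minimal point of $X'\times X'$, because $u$ lies in a minimal left ideal.
\item A minimal orbit closure that meets the diagonal is contained in it, so $ux'=ux''$.
\end{itemize}
Then $(x',y)=u(x'',y)\in J'$. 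With these two repairs, your Ellis-semigroup version is a compact translation of the paper's combinatorial argument. It costs heavier machinery and uses the full fibrewise proximality of $\phi$, where the paper needs only proximality of one pair.
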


\begin{proof}

Suppose that $X\perp_{Q} Y$ and let $J'$ be a joining of $X'$ and $Y$ that projects onto $X'_{eq}\times Y_{eq}$. Then $J:=\phi\times\id(J')$ is a joining of $X$ and $Y$ that projects onto $X_{eq}\times Y_{eq}$. By Lemma \ref{distal factor for prox ext}, $X'_{eq}=X_{eq}$. By the quasi-disjointness of $X$ and $Y$, one has $J=X\times Y$.

\noindent{\bf Claim 1}. If $(x',y)\in X'\times Y$ is a $T\times S$-minimal point, then $(x',y)\in J'$.
\begin{proof}[Proof of Claim 1]
Fix a minimal point $(x',y)\in X'\times Y$. Since $\phi\times\id(J')=X\times Y$. There is some $x''\in X'$ such that $\phi(x')=\phi(x'')$ and $(x'',y)\in J'$.   For any $\epsilon>0$, it follows from the minimality of $(x',y)$ that $\{n\in\mathbb{Z}:\ \rho(T^nx',x')<\epsilon/2, \rho(S^ny, y)<\epsilon\}$ is syndetic and it follows from the proximality of $(x',x'')$ that $\{n\in\mathbb{Z}:\ \rho(T^nx',T^nx'')<\epsilon/2\}$ is thick. Thus there is some $n\in\mathbb{Z}$ such that
\[ \rho(T^{n}x'', x')\leq \rho(T^{n}x'', T^{n}x')+\rho(T^{n}x',x')<\epsilon\ \ \text{and}\ \ \rho(T^{n}y,y)<\epsilon.\]
This implies that $(x',y)\in \overline{orb_{T\times S}(x'',y)}$. Since $J'$ is $T\times S$-invariant, $ \overline{orb_{T\times S}(x'',y)}\subset J'$. Thus $(x',y)\in J$.
\end{proof}
Note that the set of $T\times S$-minimal points in dense in $X'\times Y$. Thus it follows from Claim 1 that $J'=X'\times Y$. Hence $X'\perp_{Q} Y$.
\end{proof}
\begin{rem}
A special case in Lemma \ref{QD for prox ext} is $X'\rightarrow X$ is an almost one to one extension.
\end{rem}

\begin{thm}\label{QD to factor}
Let $(X,T),(Y,S)$ be minimal systems and $\phi:(X,T)\rightarrow (Z,R)$ be a factor. If $X\perp_{Q} Y$ then $Z\perp_{Q} Y$.
\end{thm}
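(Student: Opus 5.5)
We plan to work through the fibre characterization of Theorem \ref{main1} rather than with joinings directly. The reason is that pulling a joining of $Z$ and $Y$ back to $X\times Y$ does not obviously keep full projection onto $X_{eq}\times Y_{eq}$.

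\textbf{Setup.} Write $\alpha_X,\beta_X,\gamma_X$ for the maps attached to the pair $(X,Y)$ and $\alpha_Z,\beta_Z,\gamma_Z$ for those attached to $(Z,Y)$. Every eigenvalue of $Z$ is an eigenvalue of $X$, so $\Lambda_Z\cap\Lambda_Y\subset\Lambda_X\cap\Lambda_Y$. Hence the construction in Lemma \ref{ex of MCEQ} gives a surjective group homomorphism $\psi:Eq(X,Y)\to Eq(Z,Y)$, dual to this inclusion and intertwining the translations. After normalizing base points we expect $\alpha_Z\circ\phi=\psi\circ\alpha_X$ and $\beta_Z=\psi\circ\beta_X$, and therefore
\[\gamma_Z\circ(\phi\times\id)=\psi\circ\gamma_X .\]
For $z\in Eq(Z,Y)$ this gives $\gamma_Z^{-1}(z)=(\phi\times\id)\bigl(\gamma_X^{-1}(\psi^{-1}(z))\bigr)$, where $\psi^{-1}(z)$ is a coset of $K:=\ker\psi$. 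By Theorem \ref{main1}, $X\perp_Q Y$ yields a dense $G_\delta$ set $\Omega\subset Eq(X,Y)$ such that, for $w\in\Omega$, $\gamma_X^{-1}(w)$ has a unique minimal subset $M_w$.

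\textbf{Goal and reduction.} It suffices to find one $z$ for which $\gamma_Z^{-1}(z)$ has a unique minimal subset, and then apply (1)$\Rightarrow$(3) of Theorem \ref{main1}. Alternatively, one can imitate the proof of (2)$\Rightarrow$(3) directly with a joining $J\subset Z\times Y$ satisfying $\gamma_Z(J)=Eq(Z,Y)$. Two facts make the reduction work.

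First, every minimal subset of $\gamma_Z^{-1}(z)$ lifts. If $N$ is such a set, then $(\phi\times\id)^{-1}(N)$ is closed and invariant, so it contains a minimal set. That minimal set lies in a single fibre $\gamma_X^{-1}(w)$ with $w\in\psi^{-1}(z)$, and it maps onto $N$.

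Second, by Lemma \ref{semi-2} applied to the semiopen map $\psi$ (it is open, being a group homomorphism), for a residual set of $z$ the coset $\psi^{-1}(z)$ meets $\Omega$ in a residual subset of the coset. For such $z$ the problem therefore reduces to a single claim.

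\textbf{Key claim.} For $w,w'\in\psi^{-1}(z)\cap\Omega$,
\[(\phi\times\id)(M_w)=(\phi\times\id)(M_{w'}).\]
Given this claim, we also need it for minimal sets lying over $w\notin\Omega$. We would handle those by a limiting argument: take Hausdorff limits of the $M_{w_n}$ with $w_n\in\Omega$, as in Lemma \ref{1 min set}. If that fails, we would instead run the density argument of (2)$\Rightarrow$(3). In that version one takes $(x_0,y_0)\in M_w$ and shows that all points $(R^m\phi(x_0),S^ny_0)$ lie in $J$, using that $J\cap\gamma_Z^{-1}(z')$ contains the common image of the sets $M_{w'}$.

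\textbf{Main obstacle.} The hard step is the key claim: we must compare unique minimal sets lying over different points of one $K$-coset. Our first attempt will be to move between fibres by an element $k\in K$ realized inside a fibre of $\phi$. Choose $x,x'$ with $\phi(x)=\phi(x')$ and $\alpha_X(x')-\alpha_X(x)=k$. The set $\overline{orb_{T\times S}(x',y)}$ then sits over $w+k$ and has the same $\phi\times\id$ image as $\overline{orb_{T\times S}(x,y)}$. To produce such pairs we would use Lemma \ref{lift RP} and Lemma \ref{lift of RP}. The needed fact is that $\alpha_X(\phi^{-1}(\zeta))$ is a full $K$-coset for each $\zeta\in Z$. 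The intended argument is that $K$ is exactly the part of $Eq(X,Y)$ that becomes regionally proximal once we pass down to $Z$, so lifting ${\bf RP}(Z)$-pairs along $\phi$ should sweep out the whole coset. If this surjectivity fails, we would restrict to the closed subgroup of $K$ that is realized in this way and verify that it is still all of $K$, via the duality description of $Eq(X,Y)$ from Lemma \ref{ex of MCEQ}.
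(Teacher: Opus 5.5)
Your fibre-translation idea is sound, but the plan as written has a gap where you pass to minimal sets lying over points $w\in\psi^{-1}(z)\setminus\Omega$. Neither remedy you propose closes it.

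\begin{itemize}
\item The Hausdorff-limit argument fails. If $w_n\to w$ with $w_n\in\Omega$ and $M_{w_n}\to L$, then $L$ is just some closed invariant subset of $\gamma_X^{-1}(w)$ with image $N_0$. Nothing forces a given minimal set $M\subset\gamma_X^{-1}(w)$ to lie in $L$, and when $\gamma_X^{-1}(w)$ has several minimal sets, $M$ is exactly what must be controlled.
\item The density fallback is circular. Knowing that $J\cap\gamma_Z^{-1}(z')$ contains the common image of the $M_{w'}$ amounts to knowing that every minimal subset of $\gamma_Z^{-1}(z')$ equals that image, including those that lift only to fibres over non-$\Omega$ points.
\end{itemize}

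The repair is that your translation trick needs uniqueness only at the target fibre. So the symmetric key claim, Lemma \ref{semi-2} and any limit are unnecessary. Fix $w_0\in\Omega$, and put $z_0=\psi(w_0)$ and $N_0=(\phi\times\id)(M_{w_0})$. Let $N\subset\gamma_Z^{-1}(z_0)$ be minimal, and lift it to a minimal $M\subset\gamma_X^{-1}(w)$ with $w\in\psi^{-1}(z_0)$. Take $(x,y)\in M$ and $x'\in\phi^{-1}(\phi(x))$ with $\alpha_X(x')=\alpha_X(x)+(w_0-w)$. Then $(x',y)\in\gamma_X^{-1}(w_0)$, so $M_{w_0}\subset\overline{orb_{T\times S}(x',y)}$. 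Applying $\phi\times\id$ gives $N_0\subset\overline{orb_{T\times S}(\phi(x),y)}=N$, hence $N=N_0$. Theorem \ref{main1} then gives $Z\perp_Q Y$.

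The coset fact also needs a real proof rather than the heuristic you give. Take $\pi':X_{eq}\to Z_{eq}$, $\phi_X:X_{eq}\to Eq(X,Y)$, $\Gamma$ and $\Gamma'$ as in Lemma \ref{MCEQ for factors}, with $Z$ in the role of $X'$. The fact then splits into two parts.
\begin{enumerate}
\item $\pi_X(\phi^{-1}(\zeta))=\pi'^{-1}(\pi_Z(\zeta))$. For $u$ in the right-hand side, take $x_1\in\pi_X^{-1}(u)$. Then $(\phi(x_1),\zeta)\in{\bf RP}(Z)$, and Lemma \ref{lift RP} yields $x'\in\phi^{-1}(\zeta)$ with $(x_1,x')\in{\bf RP}(X)$, so $\pi_X(x')=u$.
\item $\phi_X(\ker\pi')=\ker\psi=K$. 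The inclusion $\subset$ comes from the commuting square. For $\supset$, use $(\pi'\times\id)(\Gamma)=\Gamma'$: if $(\pi'(g),h)\in\Gamma'$, pick $(g_1,h)\in\Gamma$ with $\pi'(g_1)=\pi'(g)$. Then $(g,h)+\Gamma=\phi_X(g-g_1)$ with $g-g_1\in\ker\pi'$.
\end{enumerate}

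Once repaired, your argument is the alternative proof the paper sketches after Theorem \ref{SQD by factors}. There, the inclusion $\gamma'^{-1}(z')\subset(\pi\times\id)\gamma^{-1}(z)$ in (\ref{eq factor}) amounts to your coset fact.

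The paper's actual proof is far shorter, and your reason for avoiding it is mistaken: the pullback $\widetilde J=\{(x,y):(\phi(x),y)\in J\}$ does project onto $X_{eq}\times Y_{eq}$. Given $(x,y)$, choose $(z',y')\in J$ with $z'\in{\bf RP}[\phi(x)]$ and $y'\in{\bf RP}[y]$. Lemma \ref{lift RP} gives $x'\in\phi^{-1}(z')$ with $(x,x')\in{\bf RP}(X)$. Then $(x',y')\in\widetilde J\cap({\bf RP}[x]\times{\bf RP}[y])$, and $X\perp_Q Y$ forces $\widetilde J=X\times Y$. This uses only the first half of your coset fact, and needs neither Theorem \ref{main1} nor any group theory in $Eq(X,Y)$.
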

\begin{proof}
Let $J$ be joining of $Z$ and $Y$ that projects onto $Z_{eq}\times Y_{eq}$. Define
\[ \widetilde{J}=\{(x,y)\in X\times Y: \ (\phi(x),y)\in J\}.\]
Clearly, $\widetilde{J}$ is a joining of $X$ and $Y$. We claim that $\widetilde{J}$ projects onto $X_{eq}\times Y_{eq}$. This is equivalent to that
\[({\bf RP}[x]\times {\bf RP}[y])\cap \widetilde{J}\neq\emptyset, \ \ \forall (x,y)\in X\times Y.\]
Since $J$ projects onto $Z_{eq}\times Y_{eq}$, we have
\[({\bf RP}[z]\times {\bf RP}[y])\cap \widetilde{J}\neq\emptyset, \ \ \forall (z,y)\in Z\times Y.\]
Fix $(x,y)\in X\times Y$ and let $z=\phi(x)$. Then there is $z'\in {\bf RP}[z]$ and $y'\in {\bf RP}[y]$ such that $(z',y')\in J$. By Lemma \ref{lift RP}, there is $x'\in \phi^{-1}(z')$ such that $(x, x')\in{\bf RP}(X)$. Thus $(x', y')\in \widetilde{J}$ and $(x', y')\in {\bf RP}[x]\times {\bf RP}[y]$. This implies that $\widetilde{J}$ projects onto $X_{eq}\times Y_{eq}$. Since $X\perp_{Q} Y$, we have $\widetilde{J}=X\times Y$. Thus $J=Z\times Y$ and hence $Z\perp_{Q} Y$.
\end{proof}

\subsection{Strong quasi-disjointness}

\begin{lem}\label{MCEQ for factors}
Let $(X,T)$ and $(Y,S)$ be minimal systems. If $\pi: X\rightarrow X'$ is a factor, then $Eq(X',Y)$ is a factor of $Eq(X,Y)$ and we have the following commuting diagrams.
\begin{equation}\label{eq4.1}
\begin{tikzcd}
 X \arrow[r,"\pi_{X}"] \arrow[d,"\pi" ']\arrow[rr, bend left=30, "\alpha"] & X_{eq} \arrow[d, "\pi' "'] \arrow[r, "\phi_{X}"]& Eq(X,Y)\arrow[d,"\psi"'] \\
X' \arrow[r,"\pi_{X'}"] \arrow[rr, bend right=30,"\alpha' "]& X'_{eq} \arrow[r,"\phi_{X'}"] & Eq(X',Y)
\end{tikzcd}
\qquad\quad
\begin{tikzcd}
X\times Y\ar[d,"\pi\times \id"left] \ar[r,"\gamma" above] & Eq(X,Y) \ar[d,"\psi"]\\
X'\times Y \ar[r, "\gamma~' "above]&  Eq(X',Y)
\end{tikzcd}
\end{equation}
\end{lem}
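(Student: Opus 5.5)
The plan is to obtain $\psi$ from the universal property in Definition \ref{def-MCEQ}, applied to the pair $(X',Y)$ viewed through $X$. The map $\alpha'\circ\pi: X\rightarrow Eq(X',Y)$ and the map $\beta': Y\rightarrow Eq(X',Y)$ are homomorphisms onto the equicontinuous system $Eq(X',Y)$. So $Eq(X',Y)$ is a common equicontinuous factor of $X$ and $Y$. By the definition, there are homomorphisms $\eta_1,\eta_2: Eq(X,Y)\rightarrow Eq(X',Y)$ with
\[\alpha'\pi=\eta_1\alpha \quad\text{and}\quad \beta'=\eta_2\beta.\]

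The first real step is to show that $\eta_1$ and $\eta_2$ can be taken equal. Both are homomorphisms between minimal group rotations. Hence each has the form $\eta_i(\chi)=\eta(\chi)+c_i$, where $\eta$ is the continuous group homomorphism dual to the inclusion $\Lambda_{X'}\cap\Lambda_Y\subset \Lambda_X\cap\Lambda_Y$; this is the explicit construction in the proof of Lemma \ref{ex of MCEQ}. The two maps may differ by the constant $c=c_1-c_2$. I would simply redefine $\beta'$ by $\beta'+c$. This is harmless, because $Eq(X',Y)$ with the maps $\alpha'$ and $\beta'+c$ is still a maximal common equicontinuous factor. Then $\gamma'$ changes only by a translation, which does not affect which fibres are minimal or have a unique minimal subset. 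Alternatively, one can normalize base points at the start, fixing $x_0\in X$ and $y_0\in Y$ and requiring $\alpha(x_0)=\beta(y_0)=0$, and similarly for $X'$ with $\pi(x_0)$. With this normalization $\eta_1=\eta_2=\eta=:\psi$ is a group homomorphism. Its surjectivity follows from the minimality of $Eq(X',Y)$.

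For the left diagram, the map $\pi': X_{eq}\rightarrow X'_{eq}$ exists by the maximality of $X_{eq}$, since $\pi_{X'}\pi$ is an equicontinuous factor of $X$. Commutativity of the right square then rests on $\psi$ being additive:
\[\psi\gamma(x,y)=\psi(\alpha(x))-\psi(\beta(y))=\alpha'(\pi x)-\beta'(y)=\gamma'(\pi x,y).\]
The outer square $\psi\alpha=\alpha'\pi$ holds by construction. The middle square $\psi\phi_X=\phi_{X'}\pi'$ follows because both sides agree after composing with the surjection $\pi_X$.

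The main obstacle is the constant-ambiguity issue in the first step: the universal property alone gives two possibly different maps $\eta_1\neq\eta_2$. The fix is to use the group structure and choose base points consistently, as above.
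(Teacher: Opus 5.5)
Your argument is correct once you commit to the base-point normalization, but the first fix you offer does not work and should be dropped. Write $\eta_i=\eta+c_i$. Because $\gamma$ is a difference, an affine $\psi$ with linear part $\eta$ satisfies $\psi\gamma(x,y)=\psi\alpha(x)-\eta\beta(y)$. So once $\psi\alpha=\alpha'\pi$, the right square holds exactly if and only if $\beta'=\eta\beta$, i.e.\ if and only if $c_2=0$; whether $\eta_1=\eta_2$ is not the relevant condition.

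Replacing $\beta'$ by $\beta'+(c_1-c_2)$ gives $\beta'=\eta\beta+c_1$. With $\psi=\eta_1=\eta+c_1$ you then get $\psi\gamma=\gamma'(\pi\times\id)+c_1$, which is commutativity only up to a translation. The normalization $\alpha(x_0)=\beta(y_0)=0$, $\alpha'(\pi x_0)=\beta'(y_0)=0$ forces $c_1=c_2=0$ and makes $\psi=\eta$ a group homomorphism. With that in place, your checks of both diagrams go through, including the middle square via surjectivity of $\pi_X$.

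Your route is genuinely different from the paper's. The paper works in the concrete model $Eq(X,Y)=(G\times H)/\Gamma$ with $G=X_{eq}$, $H=Y_{eq}$ and $\Gamma=\overline{\{(T^ne_G,S^ne_H)\}}$. It observes $(\pi'\times\id)(\Gamma)=\Gamma'$, defines $\psi((x,y)+\Gamma)=(\pi'(x),y)+\Gamma'$, and checks both diagrams by direct computation from $\phi_X(x)=(x,e_H)+\Gamma$ and $\phi_Y(y)=(e_G,-y)+\Gamma$. There $\psi$ comes out a group homomorphism automatically, but only through a tacit normalization: $(\pi'\times\id)(\Gamma)=\Gamma'$ requires $\pi'(e_G)=e_{G'}$, the same kind of base-point choice you make explicitly.

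Your version avoids the explicit model. It uses the universal property of Definition~\ref{def-MCEQ} together with the rigidity fact that factor maps between minimal rotations are affine with a uniquely determined linear part (or the explicit form of the maps from the proof of Lemma~\ref{ex of MCEQ}). What it buys is a clear account of exactly where base points enter. The paper's version buys a self-contained formula for $\psi$ with no appeal to that rigidity fact.
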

\begin{proof}
Let $X_{eq}=G, X'_{eq}=G'$ and $Y_{eq}=H$, which are compact abelian metric groups. Let \[\Gamma=\overline{\{(T^{n}e_{G}, S^{n}e_{H}): n\in\mathbb{Z}\}}  \ \ \text{and}\ \  \Gamma'=\overline{\{(T^{n}e_{G'}, S^{n}e_{H}): n\in\mathbb{Z}\}}. \]
Then $Eq(X,Y)=(G\times H)/\Gamma$ and $Eq(X', Y)=(G'\times H)/\Gamma'$.

 Recall that the transformations on $Eq(X,Y)$ and $Eq(X',Y)$ are defined by
\[R: (G\times H)/\Gamma\rightarrow (G\times H)/\Gamma,\
R((x,y)+\Gamma)=(Tx,y)+\Gamma=(x,S^{-1}y)+\Gamma\]
and
\[R: (G'\times H)/\Gamma'\rightarrow (G'\times H)/\Gamma', \
R((x',y)+\Gamma')=(Tx',y)+\Gamma'=(x',S^{-1}y)+\Gamma'.\]

Let $\pi': X_{eq}\rightarrow X'_{eq}$ be the factor map. Then we have $\pi'\times \id(\Gamma)=\Gamma'$. Thus we can define $\psi: (G\times H)/\Gamma\rightarrow (G'\times H)/\Gamma'$ by $\psi((x,y)+\Gamma)=(\pi'(x),y)+\Gamma'$. Clearly,
\begin{align*}
\psi R((x,y)+\Gamma))&=\psi((x,S^{-1}y)+\Gamma)=(\pi'(x), S^{-1}y)+\Gamma'\\
&=R ((\pi'(x), y)+\Gamma')=R\psi((x,y)+\Gamma)),
\end{align*}
for any $(x,y)+\Gamma\in (G\times H)/\Gamma$. Thus $\psi$ is a factor map.

Next we verify the commuting diagrams. It suffices to show the following ones.
\[
\begin{tikzcd}
 X_{eq} \arrow[d, "\pi' "'] \arrow[r, "\phi_{X}"]& Eq(X,Y)\arrow[d,"\psi"'] \\
 X'_{eq} \arrow[r,"\phi_{X'}"] & Eq(X',Y)
\end{tikzcd}
\qquad\quad
\begin{tikzcd}
X\times Y\ar[d,"\pi\times \id"left] \ar[r,"\gamma" above] & Eq(X,Y) \ar[d,"\psi"]\\
X'\times Y \ar[r, "\gamma~' "above]&  Eq(X',Y)
\end{tikzcd}
\]

Recall that $\phi_{X}: X_{eq}\rightarrow Eq(X,Y)$ is define by $\phi_{X}(x)=(x, e_{H})+\Gamma$ and $\phi_{X'}: X'_{eq}\rightarrow Eq(X',Y)$ is define by $\phi_{X'}(x')=(x', e_{H})+\Gamma'$. Thus
\[ \psi\phi_{X}(x)=\psi((x, e_{H})+\Gamma)=(\pi'(x),e_{H})+\Gamma'=\phi_{X'}(\pi'(x))=\phi_{X'}\pi'(x)\]
for any $x\in X_{eq}$. This shows that $\psi\phi_{X}=\phi_{X'}\pi'$.

For the second diagram, recall that $\phi_{Y}:Y_{eq}\rightarrow Eq(X,Y)$ is define by $\phi_{Y}(y)=(e_{G}, -y)+\Gamma$ and $\phi'_{Y}: Y_{eq}\rightarrow Eq(X',Y)$ is define by $\phi'_{Y}(y)=(e_{G'}, -y)+\Gamma'$. For any $(x,y)\in X\times Y$, one has
\begin{align*}
\psi\gamma(x,y)&=\psi(\phi_{X}\pi_{X}(x)-\phi_{Y}\pi_{Y}(y))=\psi((\pi_{X}x, -\pi_{Y}y)+\Gamma)\\
&=(\pi'\pi_{X}x, -\pi_{Y}y)+\Gamma'=(\pi_{X'}\pi x, -\pi_{Y}y)+\Gamma'\\
&=\phi_{X'}(\pi x)- \phi'_{Y}(y)=\gamma'(\pi x, y)=\gamma' (\pi\times\id)(x,y).
\end{align*}
 Thus the second commuting diagram holds.
 \end{proof}

\begin{thm}\label{SQD by factors}
Let $(X,T)$ and $(Y,S)$ be minimal systems. Suppose that $\pi: X\rightarrow X'$ is a factor. If $X\perp_{SQ} Y$ then $X'\perp_{SQ} Y$.
\end{thm}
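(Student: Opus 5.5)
By Theorem \ref{char of SQ}, it suffices to find a single point $z'\in Eq(X',Y)$ such that $\gamma'^{-1}(z')$ is a minimal subset of $X'\times Y$. We will use the commuting diagrams of Lemma \ref{MCEQ for factors}, namely $\psi\circ\gamma=\gamma'\circ(\pi\times\id)$. The basic observation is that the fiber $\gamma'^{-1}(z')$ is the image under $\pi\times\id$ of $\gamma^{-1}(\psi^{-1}(z'))$. The latter is a union of the fibers $\gamma^{-1}(z)$ with $\psi(z)=z'$, and by Theorem \ref{char of SQ} most of these fibers are minimal. The difficulty is that there are many such $z$, and the images of their fibers are distinct closed invariant sets. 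So we cannot just push forward one minimal fiber.

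We will therefore make the fibers of $\psi$ visible through translations. Since $\psi$ is a surjective group homomorphism of compact abelian groups, composed with a translation, the fiber $\psi^{-1}(z')$ is a coset $z+K$, where $K=\ker\psi$ is a closed subgroup. For $k\in K$ we look for self-maps of $X\times Y$ that commute with $T\times S$ and carry $\gamma^{-1}(z)$ onto $\gamma^{-1}(z+k)$, while preserving the fibers of $\pi\times\id$.

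The model case is $k=R^{m}(e)$ coming from a power of $T$. There the map $T^{m}\times\id$ conjugates $\gamma^{-1}(z)$ to $\gamma^{-1}(R^{m}z)$, exactly as in the proof of Theorem \ref{main1}. Elements of $K$, however, come from ${\bf RP}$-type data of $X$ relative to $X'$. The cleaner approach is to work in the quotient: it suffices to show that $(\pi\times\id)(\gamma^{-1}(z))$ already equals the whole fiber $\gamma'^{-1}(\psi(z))$ for a minimal fiber $\gamma^{-1}(z)$. The image of a minimal set is minimal, so this would finish the proof.

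To prove this, fix $z$ in the dense $G_\delta$ set $\Omega$ of Theorem \ref{char of SQ}(2), and take $(x',y)\in\gamma'^{-1}(\psi(z))$. Choose $x\in\pi^{-1}(x')$. Then $\gamma(x,y)=z+k$ for some $k\in K$. Because $\phi_X\colon X_{eq}\to Eq(X,Y)$ is onto and the diagram commutes, $k$ can be realized as $\phi_X(g)$ with $g\in\ker\pi'$, using the explicit description $Eq=(G\times H)/\Gamma$, $\ker\psi=(\ker\pi'\times\{e\})+\Gamma$. By Lemma \ref{lift of RP} applied to the relation ${\bf RP}$ together with the lifting Lemma \ref{lift RP} relative to $\pi$, we want to replace $x$ by some $\tilde x\in\pi^{-1}(x')$ with $\pi_X(\tilde x)=\pi_X(x)-g$. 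Then $\gamma(\tilde x,y)=z$ and $(\tilde x,y)\in\gamma^{-1}(z)$, which maps to $(x',y)$.

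The main obstacle is this lifting step: we need that $\pi_X$ maps the fiber $\pi^{-1}(x')$ onto a full coset of $\ker\pi'$ in $X_{eq}$. This is not automatic, since $\pi_X(\pi^{-1}(x'))$ could a priori be a proper subset of $\pi'^{-1}(\pi_{X'}(x'))$. If it fails, we fall back on genericity. We apply Lemma \ref{rel semiopen} and Lemma \ref{semi-2} to get a residual set of $z$ for which the fibers behave well. We then use the density of minimal fibers within $\psi^{-1}(z')$ to show that the images $(\pi\times\id)\gamma^{-1}(z+k)$, $k\in K$, all coincide with one minimal set. Here we use that they are all minimal, and that the set of $k$ yielding a given image is closed and dense in $K$. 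Hence they cover $\gamma'^{-1}(z')$, which proves that this fiber is minimal.
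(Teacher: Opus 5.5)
Your main route is the paper's: take $z$ with $\gamma^{-1}(z)$ minimal and show $(\pi\times\id)\gamma^{-1}(z)=\gamma'^{-1}(\psi(z))$.

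The commuting square of Lemma \ref{MCEQ for factors} gives only the inclusion $\subseteq$. Your reduction of $\supseteq$ is correct: write $\gamma(x,y)=z+k$ with $k=\phi_X(g)$, $g\in\ker\pi'$, and ask for $\tilde x\in\pi^{-1}(x')$ with $\pi_X(\tilde x)=\pi_X(x)-g$.

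\textbf{The gap.} You leave exactly this lifting step unproved, suggest it may fail, and the genericity fallback does not replace it.
\begin{itemize}
\item Its premise is false. The sets $(\pi\times\id)\gamma^{-1}(z+k)$, $k\in K$, are never distinct: each equals all of $\gamma'^{-1}(z')$.
\item Nothing in the sketch explains why the set of $k$ giving a fixed image would be dense in $K$.
\item No $T\times S$-commuting maps that move along $K$ while preserving the fibers of $\pi\times\id$ are produced. The only available ones, $T^m\times\id$, carry $\psi$-fibers to different $\psi$-fibers.
\end{itemize}
So as written the argument is incomplete at its key step.

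\textbf{How to close it.} The step follows directly from Lemma \ref{lift RP}; you applied it to a pair inside the fiber, where it gives nothing new.
\begin{itemize}
\item Choose any $x_2\in X$ with $\pi_X(x_2)=\pi_X(x)-g$; this exists since $\pi_X$ is onto.
\item Then $\pi_{X'}(\pi(x_2))=\pi'(\pi_X(x)-g)=\pi'(\pi_X(x))=\pi_{X'}(x')$. Since $X'$ is minimal, this says $(\pi(x_2),x')\in{\bf RP}(X')$.
\item Lemma \ref{lift RP} for $\pi$ gives $\tilde x\in\pi^{-1}(x')$ with $(x_2,\tilde x)\in{\bf RP}(X)$. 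Hence $\pi_X(\tilde x)=\pi_X(x_2)=\pi_X(x)-g$, and so $\gamma(\tilde x,y)=z$.
\end{itemize}
Thus $\pi_X(\pi^{-1}(x'))=\pi'^{-1}(\pi_{X'}(x'))$ for every $x'$. The equality $\gamma'^{-1}(\psi(z))=(\pi\times\id)\gamma^{-1}(z)$ then holds for every $z$. The image of a minimal fiber $\gamma^{-1}(z)$ is minimal, and Theorem \ref{char of SQ} finishes the proof; Lemmas \ref{rel semiopen} and \ref{semi-2} are not needed.

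The paper asserts this equality directly ``by Lemma \ref{MCEQ for factors}'', which on its own yields only one inclusion. You were right that the reverse inclusion needs justification; the argument above supplies it.
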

\begin{proof}
Notions are the same with the ones in Lemma \ref{MCEQ for factors}.

Since $X\perp_{SQ} Y$, there is some $z\in Eq(X,Y)$ such that $\gamma^{-1}(z)$ is minimal. Let $z'=\psi(z)\in Eq(X',Y)$. By Lemma \ref{MCEQ for factors}, we have
\begin{equation}\label{eq factor}
 \gamma'^{-1}(z')=(\pi\times \id )\gamma^{-1}(z).
 \end{equation}
Thus $\gamma'^{-1}(z')$ is minimal and hence $X'\perp_{SQ} Y$.
\end{proof}

We remark that it follows from (\ref{eq factor}) that if $\gamma^{-1}(z)$ has a unique minimal set then so does $\gamma'^{-1}(z')$. By Theorem \ref{main1}, if $X\perp_{Q} Y$ then $X'\perp_{Q}Y$. This yields another proof of Theorem \ref{QD to factor}.

\begin{lem}\label{SQD for almost 1-1}
Let $(X,T)$ and $(Y,S)$ be minimal systems. Suppose that $\pi: (X,T)\rightarrow (X',T) $ is an almost one to one extension. If $X'\perp_{SQ} Y$ then $X\perp_{SQ} Y$.
\end{lem}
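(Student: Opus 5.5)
The plan is to use the fact that an almost one to one extension is proximal, so that the common maximal equicontinuous factor does not change, and then to transfer minimality of a generic fibre of $\gamma'$ to the corresponding fibre of $\gamma$.

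\emph{Step 1: identifying the maps.} An almost one to one extension between minimal systems is proximal. By Lemma \ref{distal factor for prox ext}, $X_{eq}=X'_{eq}$, and hence $Eq(X,Y)=Eq(X',Y)$. In the notation of Lemma \ref{MCEQ for factors}, the factor map $\psi$ is then the identity, and $\gamma=\gamma'\circ(\pi\times\id)$. Consequently
\[\gamma^{-1}(z)=(\pi\times\id)^{-1}\bigl(\gamma'^{-1}(z)\bigr)\quad\text{for every } z\in Eq(X,Y).\]

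\emph{Step 2: choosing a good point $z$ by Baire category.} I would intersect three residual subsets of $Eq(X,Y)$.
\begin{enumerate}
\item By Theorem \ref{char of SQ}, the set $\Omega_1$ of $z$ for which $\gamma'^{-1}(z)$ is minimal is a dense $G_\delta$.
\item Let $X'_0=\{x'\in X': |\pi^{-1}(x')|=1\}$. Since $\pi$ is almost one to one and semiopen (Lemma \ref{semiopen of factor}), $X'_0$ is residual in $X'$, so $X'_0\times Y$ is residual in $X'\times Y$. The map $\gamma'$ is semiopen by Lemma \ref{semiopen}. Lemma \ref{semi-2} then gives a residual set $\Omega_2$ such that $D_z:=(X'_0\times Y)\cap\gamma'^{-1}(z)$ is residual in $\gamma'^{-1}(z)$ for each $z\in\Omega_2$.
\item The map $\pi\times\id$ is semiopen, being a product of factor maps between minimal systems. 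Lemma \ref{rel semiopen}, applied with $\phi=\pi\times\id$ and with $\gamma'$ in the role of $\psi$, gives a residual set $\Omega_3$ such that the restriction $\pi\times\id:\gamma^{-1}(z)\to\gamma'^{-1}(z)$ is semiopen for each $z\in\Omega_3$.
\end{enumerate}
Fix $z\in\Omega_1\cap\Omega_2\cap\Omega_3$.

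\emph{Step 3: the fibre $\gamma^{-1}(z)$ is minimal.} Let $M\subset\gamma^{-1}(z)$ be a minimal subset. Its image $(\pi\times\id)(M)$ is a closed invariant subset of the minimal set $\gamma'^{-1}(z)$, so it equals $\gamma'^{-1}(z)$. Every point of $D_z$ has a singleton preimage under $\pi\times\id$. Hence $M\supset(\pi\times\id)^{-1}(D_z)\cap\gamma^{-1}(z)$.

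The restricted map is semiopen and $D_z$ is residual in $\gamma'^{-1}(z)$. By the argument of Lemma \ref{semi-1}, the preimage of $D_z$ is residual, and in particular dense, in $\gamma^{-1}(z)$. Since $M$ is closed, $M=\gamma^{-1}(z)$. Thus $\gamma^{-1}(z)$ is minimal, and Theorem \ref{char of SQ} gives $X\perp_{SQ}Y$.

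\emph{Expected obstacle.} The main difficulty is Step 3. Knowing that $M$ contains the singleton fibres over a dense subset of $\gamma'^{-1}(z)$ is not enough on its own, because that set of preimages need not be dense in $\gamma^{-1}(z)$. This is exactly why the choice of $z$ must include the relative semiopenness from Lemma \ref{rel semiopen}, which is what forces the preimage of $D_z$ to be dense in the fibre.

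A minor point to verify along the way is that Lemma \ref{semi-1} only needs the restricted map to be semiopen, not equivariant. Its proof uses only semiopenness, so this should be fine.
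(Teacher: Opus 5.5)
Your proof is correct and follows the paper's route. You identify $Eq(X,Y)=Eq(X',Y)$ using proximality of the almost one to one extension. You then take $z$ in the intersection of the residual set where $\gamma'^{-1}(z)$ is minimal and the residual set from Lemma \ref{rel semiopen} where $\pi\times\id:\gamma^{-1}(z)\to\gamma'^{-1}(z)$ is semiopen. Your extra set $\Omega_2$ and Step 3 spell out what the paper only asserts (``Thus $\gamma^{-1}(z)$ is also minimal''): the singleton fibres are dense in $\gamma'^{-1}(z)$, and relative semiopenness pulls them back to a dense subset of $\gamma^{-1}(z)$ that every minimal subset must contain. In fact $\Omega_2$ is not needed. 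For $z\in\Omega_1$, the set $D_z$ is a nonempty (since $\gamma'(\{x'\}\times Y)$ is all of $Eq(X',Y)$ for each $x'$), invariant $G_\delta$ subset of a minimal set, so it is automatically residual there.
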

\begin{proof}
It is clear that $X_{eq}=X'_{eq}$ and hence $Eq(X', Y)=Eq(X,Y)$.  
We use the notations as in (\ref{eq4.1}). Since $X'\perp_{SQ} Y$, there is a dense $G_{\delta}$ subset $\Omega_1$ of $Eq(X,Y)$ such that $\gamma'^{-1}(z)$ is $T\times S$-minimal for each $z\in \Omega$. Note that  both $\pi\times \id : X\times Y \rightarrow X'\times Y$ and  $\gamma': X'\times Y \rightarrow Eq(X,Y)$ are semiopen. By Lemma \ref{rel semiopen}, there is a dense $G_{\delta}$ subset $\Omega_2$ of $Eq(X,Y)$ such that the restriction $\pi\times \id: \gamma^{-1}(z)\rightarrow \gamma'^{-1}(z)$ is semiopen for each $z\in \Omega_2$. Now for each $z\in \Omega_1\cap \Omega_2$, $\gamma'^{-1}(z)$ is minimal and $\pi\times \id: \gamma^{-1}(z)\rightarrow \gamma'^{-1}(z)$ is semiopen. Thus $\gamma^{-1}(z)$ is also minimal for each  $z\in \Omega_1\cap \Omega_2$. This shows that  $X\perp_{SQ} Y$.
\end{proof}

\section{Equicontinuous extensions}
In this section, we show that quasi-disjointness preserved by equicontinuous extensions which is needed for the proof of Theorem \ref{main3}.

Let $\pi: (X,T)\rightarrow (Y,S)$ be an extension. Suppose there is a compact group $G$ acting on $X$ continuously that commutes with $T$ such that $Y=X/G$, i.e., $\pi^{-1}\pi(x)=Gx=\{gx: g\in G\}$. Then we say that $\pi$ is a {\it (compact) group extension} by compact group $G$. To indicate the commutativity of $G$ and $T$, we write the $G$-action on the right in the sequel.

It is clear that group extensions are equicontinuous extensions. The following lemma reveal their relation.

\begin{lem}\cite[Chapter 14, Theorem 1]{Aus88}\label{equi-group ext}
Let $\pi: (X,T)\rightarrow (Y,S)$ be an extension between minimal systems. Then $\pi$ is an equicontinuous extension if and only if there is a minimal system $(Z,R)$ and homomorphisms $\tilde{\pi}: Z\rightarrow Y$ and $\phi: Z\rightarrow X$ with $\pi\phi=\tilde{\pi}$ and $\tilde{\pi}$ is a compact group extension.
\[
\begin{tikzcd}
Z\arrow[rr, "\phi",] \arrow[rd, "\tilde{\pi}"'] & & X\arrow[ld, "\pi"]\\
& Y&
\end{tikzcd}
\]
\end{lem}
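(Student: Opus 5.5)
The plan is to prove the two implications separately. The easy direction is the ``if'' part; the ``only if'' part is the standard Ellis--Auslander construction of a group extension out of the fibre maps of an equicontinuous extension. Throughout, ``equicontinuous extension'' is read in the usual relative sense: for every $\epsilon>0$ there is $\delta>0$ such that $\pi(x_1)=\pi(x_2)$ and $\rho(x_1,x_2)<\delta$ imply $\rho(T^nx_1,T^nx_2)<\epsilon$ for all $n$.

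\emph{If.} First I will check that a compact group extension $\tilde{\pi}:Z\to Y$ by $G$ is equicontinuous. Averaging a metric over $G$ gives a $G$-invariant metric, and joint continuity of the action on the compact space $Z\times G$ makes the relevant estimates uniform. To pass from $Z$ down to $X$, I will work with the relative regionally proximal relation
\[Q(\pi)=\{(x,x'):\ \exists\, x_i\to x,\ x_i'\to x',\ \pi(x_i)=\pi(x_i'),\ n_i\in\mathbb{Z},\ \rho(T^{n_i}x_i,T^{n_i}x_i')\to 0\}.\]
Two facts are needed:
\begin{itemize}
\item[(a)] for an extension of minimal systems, $\pi$ is equicontinuous if and only if $Q(\pi)=\Delta_X$;
\item[(b)] $Q(\pi)\subset(\phi\times\phi)Q(\tilde{\pi})$.
\end{itemize}
For (b), given the nets in $X$ defining a pair in $Q(\pi)$, I lift them through $\phi$. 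I use Lemma \ref{semiopen of factor} together with the fact that a fibre of $\tilde\pi$ is a single $G$-orbit, so $z'_i=z_ig_i$. Passing to subsequences in the compact group $G$ then yields a pair in $Q(\tilde{\pi})$ over the given pair, in the same way as the lifting argument of Lemma \ref{lift RP}. Since $Q(\tilde{\pi})=\Delta_Z$, it follows that $Q(\pi)=\Delta_X$.

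\emph{Only if.} Fix $y_0\in Y$ and let $F=\pi^{-1}(y_0)$. Regard the inclusion $\iota:F\to X$ as a point of $C(F,X)$ with the uniform topology, and let $Z$ be the closure of $\{T^n\circ\iota:\ n\in\mathbb{Z}\}$, with $R(p)=T\circ p$. The steps are as follows.
\begin{enumerate}
\item By relative equicontinuity, the family $\{T^n|_F\}$ is uniformly equicontinuous. Fix an invariant fibre metric making $\pi$ isometric on fibres. Then Arzel\`a--Ascoli shows $Z$ is compact, and every $p\in Z$ is an isometry of $F$ onto a full fibre $\pi^{-1}(y)$. Surjectivity onto the fibre uses minimality of $X$ together with lower semicontinuity of fibres; the latter holds because equicontinuous extensions of minimal systems are open.
\item Set $\tilde{\pi}(p)=\pi(p(F))$ and $\phi(p)=p(x_0)$ for a fixed $x_0\in F$. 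These are factor maps satisfying $\pi\phi=\tilde{\pi}$.
\item Let $G=\{p^{-1}q:\ p,q\in Z,\ \tilde{\pi}(p)=\tilde{\pi}(q)\}$, acting on the right by $p\cdot g=p\circ g$. I must show three things: $G$ is a group, $G$ is compact in the uniform topology (it sits inside the isometry group of $F$), and $Z\cdot G\subset Z$. The last is proved by approximating $q=\lim T^{m_i}\iota$ and using uniform convergence.
\item Show that $G$ acts freely and transitively on each fibre of $\tilde{\pi}$, and that the action commutes with $R$. Both are immediate from the construction.
\item Show that $Z$ is minimal. The extension $\tilde{\pi}$ is distal, since distinct points of a fibre are isometries at positive uniform distance and $T$ preserves the fibre metric. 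Since $Y$ is minimal, $Z$ is then a disjoint union of minimal sets. If $Z$ itself is not minimal, I replace $Z$ by a minimal subset $Z_0$ and $G$ by its stabiliser $G_0=\{g:\ Z_0g=Z_0\}$, which is closed and hence compact. Then $Z_0\to Y$ is still a group extension, and $\phi|_{Z_0}$ is onto by minimality of $X$.
\end{enumerate}

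I expect the main obstacle to be steps 3--5. The difficulty is verifying that $Z\cdot G\subset Z$ and that transitivity on fibres survives the passage to the minimal subset $Z_0$. For the latter I plan to use that two points of the same fibre of $Z_0$ differ by an element of $G$ which, by minimality and closedness of $Z_0$, necessarily maps $Z_0$ to itself, so it lies in $G_0$.
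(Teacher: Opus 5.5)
The paper gives no proof of this lemma; it only cites Auslander's book. Your architecture is the classical one: $Q(\pi)=\Delta_X$ for the ``if'' part, and the fibre embedding in $C(F,X)$ for the ``only if'' part. It can be completed, but two steps fail as you describe them, and both are fixed by the same missing idea.

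\textbf{Step (b) of the ``if'' direction.} You lift $x_i,x_i'$ to $z_i$ and $z_i'=z_ig_i$ and pass to limits $z_i\to z$, $g_i\to g$, $R^{n_i}z_i\to w$. This only gives $R^{n_i}z_i'\to wg$ and $\phi(w)=\phi(wg)$. Nothing forces $\rho(R^{n_i}z_i,R^{n_i}z_i')\to 0$, i.e.\ $wg=w$. So you do not obtain a pair in $Q(\tilde\pi)$ over $(x,x')$. Semiopenness of $\phi$ does not help: you would need lifts of two sequences converging together at an arbitrary point, which requires openness at that point.

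The missing idea is this. For a fixed $g\in G$, the set $A_g=\{u\in Z:\ \phi(ug)=\phi(u)\}$ is closed and $R$-invariant, because the $G$-action commutes with $R$. By minimality of $Z$ it is either empty or all of $Z$. Since $w\in A_g$, you get $A_g=Z$, hence $x'=\phi(zg)=\phi(z)=x$. Together with your fact (a), this gives equicontinuity of $\pi$ directly. You do not even need that $\tilde\pi$ itself is equicontinuous, and $Q(\tilde\pi)$ plays no role. Equivalently, $X\cong Z/H$ with $H=\{g:\ A_g=Z\}$.

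\textbf{Step 3 of the ``only if'' direction.} ``Approximating $q=\lim T^{m_i}\iota$'' cannot give $Z\circ G\subset Z$. The map $T^{m_i}\iota$ sends $F$ into the fibre over $S^{m_i}y_0$, not over $\tilde\pi(p)$, so it cannot be composed with $p^{-1}$. Relative equicontinuity gives no control between different fibres.

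The correct order is to prove minimality first. Your observation in Step 5 already does this: $\tilde\pi$ is distal over the minimal $Y$, so every point of $Z$ is almost periodic. In particular $\iota$ is, so $Z=\overline{orb_R(\iota)}$ is itself minimal, and the passage to $Z_0$ and $G_0$ is unnecessary.

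Then take $g=p^{-1}q$ with $\tilde\pi(p)=\tilde\pi(q)$. The map $u\mapsto u\circ g$ is an isometry of $C(F,X)$ for the sup-metric, commutes with $R$, and sends $p$ to $q$. Hence $Z\circ g=\overline{orb_R(p)}\circ g=\overline{orb_R(q)}=Z$. This is exactly the argument you sketched for $Z_0$.

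From it, the group property follows, since $(p^{-1}q)(r^{-1}s)=p^{-1}(q\circ r^{-1}s)$ with $q\circ r^{-1}s\in Z$ lying over $\tilde\pi(p)$. Compactness of $G$ also follows. $G$ is closed: take limits $p_k\to p$, $q_k\to q$ in the compact $Z$ and note $q=p\circ g$. $G$ is also equicontinuous, by the uniform injectivity modulus that relative equicontinuity gives the maps $p\in Z$.
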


By Theorem \ref{QD to factor}, to show that the quasi-disjointness preserved by equicontinuous extensions, it suffices to show it holds for group extensions.

\begin{lem}\label{decom of group ext}
Let $(X,T)$ be a minimal system. Suppose $\pi: (X,T)\rightarrow (Y,S)$ be a group extension by a compact group $G$. Then there is a closed normal subgroup $H$ of $G$ and an intermediate factor $(X,T)\overset{\phi}{\rightarrow} (Z,R)\overset{\psi}{\rightarrow} (Y,S)$ such that the following commuting diagram holds and
\begin{itemize}
\item[(1)] both $\phi$ is an $H$-extension, $\psi$ is a $H\backslash G$-extension and $\pi=\psi\circ\phi$;
\item[(2)] $\psi'$ is a $H\backslash G$-extension;
\item[(3)] $X_{eq}=Z_{eq}$.

\end{itemize}

\[\begin{tikzcd}[sep=2.5cm]
X\arrow[r, "\phi", "H-extension"']\arrow[d, "\pi_X" ']\arrow[rr, bend left=20, "\pi"]  & Z \arrow[r, "\psi", "H\backslash G-extension"'] \arrow[d, "\pi_{Z}" '] & Y \arrow[d, "\pi_{Y}"'] \\
X_{eq}\arrow[r, equal, "\phi'"]  & Z_{eq} \arrow[r, "\psi' ","H\backslash G-extension"']  & Y_{eq}
\end{tikzcd}\]

\end{lem}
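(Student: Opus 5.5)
The plan is to take $H$ to be the subgroup of $G$ that acts trivially on the maximal equicontinuous factor, and $Z=X/H$. First I would check that $G$ acts on $X_{eq}$. Each $g\in G$ is a homeomorphism of $X$ commuting with $T$. The definition of regional proximality in \S\ref{MEQ} uses only $T$ and the topology, so $(x,x')\in{\bf RP}(X)$ implies $(xg,x'g)\in{\bf RP}(X)$. Hence $G$ acts on $X_{eq}=X/{\bf RP}(X)$ by $\pi_X(x)g:=\pi_X(xg)$. This action commutes with the induced rotation, and its continuity follows from compactness and continuity of the action on $X$.

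Set
\[H:=\{g\in G:\ \pi_X(xg)=\pi_X(x)\ \ \forall x\in X\},\]
the kernel of this action. It is a closed normal subgroup of $G$. Let $Z=X/H$ with quotient map $\phi$, and let $\psi:Z\to Y$ be induced by $\pi$. Then $\phi$ is an $H$-extension and $\psi$ is an $H\backslash G$-extension, because $\psi^{-1}\psi(xH)=(xG)/H$. This gives $\pi=\psi\circ\phi$ and proves (1).

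For (3), $\pi_X$ is constant on $H$-orbits, i.e. on the fibres of $\phi$. So $\pi_X$ factors through $Z$, and $X_{eq}$ is an equicontinuous factor of $Z$, hence a factor of $Z_{eq}$. Conversely $Z_{eq}$ is an equicontinuous factor of $X$, hence a factor of $X_{eq}$. The resulting factor maps $X_{eq}\to Z_{eq}\to X_{eq}$ compose to a self-factor map of $X_{eq}$ that commutes with $\pi_X$. By minimality this composite is the identity, so the map $\phi':X_{eq}\to Z_{eq}$ is an isomorphism. This gives the left square.

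For (2), let $\psi':Z_{eq}=X_{eq}\to Y_{eq}$ be the induced factor map. I will identify its fibres with $G$-orbits; this is the main point. Fix $y\in Y$ and $x\in\pi^{-1}(y)$. By Lemma \ref{lift of RP}, $\pi\times\pi({\bf RP}(X))={\bf RP}(Y)$. So for $y'\in{\bf RP}[y]$ there exist $(x_1,x_1')\in{\bf RP}(X)$ with $\pi(x_1)=y$ and $\pi(x_1')=y'$. Therefore
\[(\pi_Y\circ\pi)^{-1}(\pi_Y(y))=\bigcup_{x_1\in\pi^{-1}(y)}{\bf RP}[x_1]=\bigcup_{g\in G}{\bf RP}[xg].\]
(Lemma \ref{lift RP} gives the same conclusion.) Projecting to $X_{eq}$, the fibre of $\psi'$ over $\pi_Y(y)$ is exactly the orbit $\pi_X(x)G$. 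Since $H$ is the kernel of the action, this orbit equals $\pi_X(x)(H\backslash G)$, and $H\backslash G$ acts faithfully on $Z_{eq}$, commuting with the rotation. Hence $\psi'$ is an $H\backslash G$-extension.

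Commutativity of the right square, $\pi_Y\psi=\psi'\pi_Z$, follows because both sides equal $\pi_Y\circ\pi$ after composing with $\phi$, and $\phi$ is onto. The step needing most care is this fibre identification in (2). It rests on Lemma \ref{lift of RP} lifting ${\bf RP}(Y)$ to ${\bf RP}(X)$ together with the fact that $G$ preserves ${\bf RP}(X)$; the remaining steps are formal.
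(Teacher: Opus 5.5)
Your proof is correct and is essentially the paper's: your $H$ (the kernel of the induced $G$-action on $X_{eq}$) coincides with the paper's $H=\{g\in G:(x,xg)\in{\bf RP}(X)\ \forall x\}$, $Z=X/H$ is the same, and the key inputs are again the $G$-invariance of ${\bf RP}(X)$ and the lifting of ${\bf RP}$. The main variation is that you obtain (3) from the universal property of $X_{eq}$ (since $\pi_X$ is constant on $H$-orbits) rather than from the paper's two-way comparison of ${\bf RP}(X)$ and ${\bf RP}(Z)$, which is slightly cleaner; one point to make explicit, since Lemma \ref{group ext2} uses it, is that $H\backslash G$ acts freely (not merely faithfully) on $Z_{eq}$, which holds because the fixed-point set of any $g\in G$ on $X_{eq}$ is closed and rotation-invariant, hence empty or everything by minimality (this is the content of the paper's Claim 2).
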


\begin{proof}
{\bf Claim 1}. ${\bf RP}(X)$ is $G$-invariant.
\begin{proof}[Proof of Claim 1]
Take $(x,y)\in{\bf RP}(X)$ and $g\in G$. We need to show $(xg, yg)\in{\bf RP}(X)$. Since $(x,y)\in{\bf RP}(X)$, there are sequences $(x_i),(y_i)$ in $X$ and $(n_i)$ in $\mathbb{Z}$ such that
\[ x_i\rightarrow x, y_i\rightarrow y \text{ and } \rho(T^{n_i}x_i, T^{n_i}y_i)\rightarrow 0.\]
Then we have
\[ x_ig\rightarrow xg, y_ig\rightarrow yg \text{ and } \rho(T^{n_i}x_ig, T^{n_i}y_ig)\rightarrow 0.\]
Thus $(xg, yg)\in{\bf RP}(X)$.
\end{proof}

\noindent{\bf Claim 2}. $H:=\{g\in G: (x,xg)\in {\bf RP}(X), \forall x\in X\}$ is a normal closed subgroup of $G$ and $H=\{g\in G:  \exists x_0\in X, (x_0,x_0g)\in {\bf RP}(X)\}$
\begin{proof}[Proof of Claim 2]
Clearly, $e_{G}\in H$ and if $h\in H$ then $h^{-1}\in H$. Now take $h_1,h_2\in H$ and fix $x\in X$. Then $(xh_1, xh_1h_2)\in {\bf RP}(X)$ and $(x,xh_1)\in {\bf RP}(X)$. Since ${\bf RP}(X)$ is an equivalence relation, we have $(x, xh_1h_2)\in {\bf RP}(X)$. Since $x$ is chosen arbitrarily, we conclude that $H$ is a subgroup of $G$. Since ${\bf RP}(X)$ is closed in $X\times X$, it is clear that $H$ is a closed subgroup of $H$.

Next we show that $H$ is normal in $G$. For this, take $g\in G$ and $h\in H$. We need to show $ghg^{-1}\in H$. Fix $x\in X$. Then  $(xg, xgh)\in {\bf RP}(X)$. By Claim 1, we have $(x,xghg^{-1})=(xgg^{-1}, xghg^{-1})\in{\bf RP}(X)$. Thus $ghg^{-1}\in H$ and hence $H$ is normal in $G$.

Finally, we show that
\[\{g\in G: (x,xg)\in {\bf RP}(X), \forall x\in X\}=\{g\in G: \exists x_0\in X, (x_0,x_0g)\in {\bf RP}(X)\}.\]
It suffices to show that if $(x_0,x_0g)\in {\bf RP}(X)$ for some $x_0\in X$ then  $(x,xg)\in {\bf RP}(X)$ for any $x\in X$.
Suppose that $(x_0,x_0g)\in {\bf RP}(X)$ and $x\in X$. Since $(X,T)$ is minimal, there is a sequence $(k_i)$ in $\mathbb{Z}$ such that $T^{k_i}x_0\rightarrow x$. Since ${\bf RP}(X)$ is $T$-invariant, we have $(T^{k_i}x_0, T^{k_i}x_0g)\in{\bf RP}(X)$ for each $k_i$. Since ${\bf RP}(X)$ is closed, we have that $ (x, xg)=\lim_{i\rightarrow\infty}(T^{k_i}x_0, T^{k_i}x_0g)\in {\bf RP}(X)$.
\end{proof}

Now let $Z=X/H$ and let $\phi: X\rightarrow Z$ be the quotient map. Since the $H$-action on $X$ commutes with $T$, $\phi$ is also a factor map. For $x\in X$, $\pi^{-1}\pi(x)=xG:=\{xg: g\in G\}$ and $\phi^{-1}\phi(x)=xH$. Define $\psi: Z\rightarrow Y$ by $\psi(xH)=\pi(xH(Hg))$ for $x\in X$ and $Hg\in H\backslash G$. Then $\psi$ is a factor map between $Z$ and $Y$ and $\pi=\psi\circ\phi$.

By the property of maximal equicontinuous factors, $Z_{eq}$ is a factor of $X_{eq}$ and $Y_{eq}$ is a factor of $Z_{eq}$.  In additional, the commuting diagram holds. It remains to verify that $X_{eq}=Z_{eq}$ and $Z_{eq}$ is an $H\backslash G$-extension of $Y_{eq}$.

To show that $X_{eq}=Z_{eq}$, it suffices to show that for any $(x_1,x_2)\in X\times X$,  $(x_1,x_2)\in {\bf RP}(X)$ if and only if $(\phi(x_1),\phi(x_2))\in {\bf RP}(Z)$. If  $(x_1,x_2)\in {\bf RP}(X)$ then it follows from Lemma \ref{lift of RP} that $(\phi(x_1),\phi(x_2))\in {\bf RP}(Z)$. If $(\phi(x_1),\phi(x_2))\in {\bf RP}(Z)$ then it follows from Lemma \ref{lift of RP} that there is $x_1'\in \phi^{-1}\phi (x_1)$ and $x_2'\in \phi^{-1}\phi (x_2)$ such that $(x_1',x_2')\in {\bf RP}(X)$. Since $X$ is an $H$-extension of $Z$, there are $h_1,h_2\in H$ such that $x_1'=x_1h_1$ and $x_2'=x_2h_2$. By Claim 1, we have $(x_1,x_2h_2h_1^{-1})=(x_1h_1,x_2h_2)h_1^{-1}\in {\bf RP}(X)$. By the definition of $H$, we have $(x_2, x_2h_2h_1^{-1})\in {\bf RP}(X)$. Then the equivalence of ${\bf RP}$ implies that $(x_1,x_2)\in {\bf RP}(X)$. This shows that $X_{eq}=Z_{eq}$.

Finally, we show that $Z_{eq}$ is an $H\backslash G$-extension of $Y_{eq}$. We first show that $(z, zHg)\notin {\bf RP}(Z)$ for any $z\in Z$ and $Hg\in H\backslash G$ with  $Hg\neq e_{H\backslash G}$. If $(z, zHg)\notin {\bf RP}(Z)$, then there is some $x\in X$ and $h\in H$ such that $(x,xhg)\in {\bf RP}(X)$. By Claim 2, we have $hg\in H$ and hence $Hg=e_{H\backslash G}$. This contradicts our choice and hence $(z, zHg)\notin {\bf RP}(Z)$. Next we show that $\psi'^{-1}(w)=w(H\backslash G)=\{wHg: g\in G\}$ for each $w\in Z_{eq}$. Clearly, $w(H\backslash G)\subset \psi'^{-1}(w)$. Suppose that $w'\in \psi'^{-1}(w)$. Take $z\in \pi_{Z}^{-1}(w)$ and $z'\in \pi_{Z}^{-1}(w')$. Then $\pi_{Y}\psi(z)=\pi_{Y}\psi(z')$. Thus $(\psi(z),\psi(z'))\in{\bf RP}(Y)$. By Lemma , there is some $z_1\in \psi^{-1}\psi(z)$ and  $z_2\in \psi^{-1}\psi(z')$ with $(z_1,z_2)\in {\bf RP}(Z)$. Since $Z$ is a $H\backslash G$-extension of $Y$, there are some $g_1,g_2\in G$ such that $z_1=zHg_1$ and $z_2=z'Hg_2$. Then $(zHg_1, z'Hg_2)\in {\bf RP}(Z)$ implies that $(zHg_1g_2^{-1},z')=(zHg_1, z'Hg_2)Hg_2^{-1}\in {\bf RP}(Z)$. Thus
\[w'=\pi_{Z}(z')=\psi(zHg_1g_2^{-1})=\psi(z)Hg_1g_2^{-1}=wHg_1g_2^{-1}.\]
Then we have $ \psi'^{-1}(w)\subset w(H\backslash G)$ and hence they are equal. Therefore, $Z_{eq}$ is a $H\backslash G$-extension of $Y_{eq}$.
\end{proof}

\begin{lem}\label{group ext1}
Let $(X,T)$ and $(Z,S)$ be minimal systems. Suppose that $\pi: X\rightarrow Y$ is a group extension by a compact group $G$. If $Y\perp_{Q} Z$  and $X_{eq}=Y_{eq}$, then $X\perp_{Q} Z$.
\end{lem}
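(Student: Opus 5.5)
We need to show: if $J$ is a joining of $X$ and $Z$ with $(\pi_X\times\pi_Z)(J)=X_{eq}\times Z_{eq}$, then $J=X\times Z$. The plan is to push $J$ down to $Y$, use $Y\perp_Q Z$ there, and then lift back to $X$ using the group action of $G$ together with the fact that the fibers of $\pi$ lie inside regionally proximal cells.

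\textbf{Step 1 (push down).} Since $X_{eq}=Y_{eq}$, the factor $\pi_X$ factors as $\pi_Y\circ\pi$ with $X_{eq}\to Y_{eq}$ the identity. Hence $J_Y:=(\pi\times\id)(J)$ is a joining of $Y$ and $Z$ projecting onto $Y_{eq}\times Z_{eq}$. By $Y\perp_Q Z$ we get $J_Y=Y\times Z$. Concretely, for every $(x,z)\in X\times Z$ there is $g\in G$ with $(xg,z)\in J$.

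\textbf{Step 2 (fibers lie in regionally proximal cells).} Because $X_{eq}=Y_{eq}$, every fiber $xG=\pi^{-1}\pi(x)$ is contained in ${\bf RP}[x]$. Indeed, $\pi_X(xg)=\pi_Y(\pi(xg))=\pi_Y(\pi(x))=\pi_X(x)$, so $(x,xg)\in{\bf RP}(X)$ for all $g\in G$.

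\textbf{Step 3 (fill the fibers).} Here I would use Lemma~\ref{inv relation}-(4) in the form of the argument of Lemma~\ref{density of inv open}, with the roles chosen so that the regionally proximal cells on the $X$ side get filled. A first attempt is to take $(x,z)\in X\times Z$ and an open $V\ni z$, and consider $N=\overline{orb_{T\times S}(\{x\}\times V)}$, which contains ${\bf RP}[x]\times V$. The difficulty is that this needs $\{x\}\times V\subset J$, i.e.\ an open set of $z$'s, while Step 1 only gives pointwise membership. The way around this is the $K_N$ relation. Apply Lemma~\ref{inv relation} with $N:=J$ and a fully supported invariant measure $\lambda$ on $Z$ (minimality gives full support). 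Then ${\bf RP}(X)\subset K_J$, so by Step 2, $\lambda(J[x]\,\Delta\, J[xg])=0$ for all $g$. Since $\bigcup_g J[xg]=Z$ by Step 1, I aim to show $J[x]$ has positive measure and then contains an open set, so that the route via Lemma~\ref{inv relation}-(4) applies.

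\textbf{Step 4 (main obstacle).} The delicate point is upgrading measure-zero symmetric differences to equality $J[x]=Z$. The first thing I would try is to pick a dense $G_\delta$ of points $x$ (via Lemma~\ref{semi-2}, using semiopenness of $J\to X$) at which $J[x]$ has nonempty interior. Covering $Z$ by finitely many closed sets $J[xg_i]$, which is possible by compactness of $G$ together with upper semicontinuity, Baire's theorem gives some $J[xg]$ with interior $V$. Then $\{xg\}\times V\subset J$, and Lemma~\ref{inv relation}-(4) yields ${\bf RP}[xg]\times V\subset J$. By invariance and minimality of $Z$, finitely many translates $S^{n}V$ cover $Z$, and ${\bf RP}[T^n x]\times S^nV\subset J$. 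Since ${\bf RP}$ cells vary with $x$, I would instead use that $J$ is closed and invariant and that orbits of $({\bf RP}[xg]\times V)$ accumulate. Combined with the density of minimal points, as in Claim 1 of Lemma~\ref{QD for prox ext}, this should give $J\supset X\times Z$.
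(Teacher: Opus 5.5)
\textbf{There is a genuine gap.} Your Steps 1 and 2 are correct, but the argument breaks exactly where an open set is needed.

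\textbf{Step 3.} The inclusion ${\bf RP}(X)\subset K_J$ only yields $\lambda(J[x]\,\Delta\,J[xg])=0$. Since $Z=\bigcup_{g\in G}J[xg]$ is an uncountable union, this gives no bound $\lambda(J[x])>0$. Even positive measure would not give an open subset of $J[x]$.

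\textbf{Step 4.} Lemma \ref{semi-2} only says that residual subsets of $J$ meet fibres residually. It says nothing about $J[x]$ having interior in $Z$, and semiopenness of $J\to X$ is unproved anyway, since $J$ need not be minimal.

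The decisive failure is the claimed finite cover $Z=J[xg_1]\cup\dots\cup J[xg_n]$. Compactness of $G$ and upper semicontinuity of $g\mapsto J[xg]$ only give $J[xg]\subset B_\ep(J[xg_i])$ for $g$ near $g_i$. They do not give a finite subcover: think of singleton fibres $J[xg]=\{f(g)\}$ with $f\colon G\to Z$ continuous and onto. So Baire's theorem has nothing to act on.

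\textbf{Globalization.} Even granting ${\bf RP}[x_0g]\times V\subset J$ at a single point, closedness and invariance of $J$ do not propagate this to all of $X\times Z$. For two copies of an irrational rotation, $\overline{orb_{T\times S}(\{x_0\}\times V)}$ is a proper closed invariant set containing ${\bf RP}[x_0]\times V$. Nor can you borrow Claim 1 of Lemma \ref{QD for prox ext}: it uses that fibre pairs are proximal, whereas distinct points in a fibre of a group extension are never proximal.

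\textbf{The missing idea: thicken $J$ in the group direction before applying Baire.} For a compact neighbourhood $K$ of $e_G$, let $J_K=\bigcup_{g\in K}J(g\times\id)$ (the paper's $J_V$). It is closed and $T\times S$-invariant. Writing $G=Kg_1\cup\dots\cup Kg_n$, your Step 1 gives $X\times Z=\bigcup_i J_K(g_i\times\id)$, a \emph{finite} union of closed sets.

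The paper then proceeds as follows.
\begin{itemize}
\item Take the interior $W$ of $J_K$. Then $(\pi\times\id)(W)$ is dense in $Y\times Z$, because finitely many $G$-translates of $W$ are dense and $\pi$ is $G$-invariant.
\item Hence $(\pi_X\times\id)(W)$ is dense in $X_{eq}\times Z$, because $X_{eq}=Y_{eq}$.
\item Lemma \ref{density of inv open} gives $J_K=X\times Z$.
\item Finally $J=\bigcap_k J_{K_k}=X\times Z$ for $K_k\downarrow\{e_G\}$.
\end{itemize}

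Your own ingredients would also close the gap once you thicken. Fix any $x\in X$. Then $Z=\bigcup_i J_K[xg_i^{-1}]$, and the interiors $U_i$ of these closed sets have dense union. Lemma \ref{inv relation}-(4) gives ${\bf RP}[xg_i^{-1}]\times U_i\subset J_K$. By your Step 2, ${\bf RP}[xg_i^{-1}]={\bf RP}[x]$, so $\{x\}\times Z\subset J_K$.

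In summary, you are missing three things:
\begin{itemize}
\item the thickening, which produces a genuine finite closed cover;
\item obtaining the conclusion at \emph{every} $x$ rather than at one point;
\item the limit $K\to\{e_G\}$, which returns from $J_K$ to $J$.
\end{itemize}
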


\begin{proof}
Let $J$ be a joining of $X$ and $Z$ that projects onto $X_{eq}\times Z_{eq}$. Then $\widetilde{J}:=(\pi\times\id)(J)$ is a joining of $Y$ and $Z$. Since $Y_{eq}$ is a factor of $X_{eq}$, $\widetilde{J}$ projects onto $Y_{eq}\times Z_{eq}$. Then it follows from $Y\perp_{Q}Z$ that $\widetilde{J}=Y\times Z$.

We write the action of $G$ on $X$ from the right and assume that $G$ acts on $X$ freely. For each $g\in G$, let
\[ J_{g}:=J(g\times\id).\]
Since $G$-action commutes with $T$, we conclude that $J_g$ is also a joining of $X$ and $Z$ that projects onto $X_{eq}\times Z_{eq}$. Thus $\widetilde{J_{g}}:=(\pi\times \id)(J_g)=Y\times Z$ by the quasi-disjointness of $Y$ and $Z$.

Now let $V$ be a closed subset of $G$ with a nonempty interior. Then there are $g_1,g_2,\ldots,g_n\in G$ such that $G=Vg_1\cup Vg_2\cup\cdots\cup Vg_n$. Let
\[J_{V}:=\bigcup_{g\in V}J_g,\]
which is closed in $X\times Z$. Then $J_{V}(g_1\times\id)\cup\cdots\cup J_V(g_n\times\id)=X\times Z$.  Thus there is some $g_i$ such that $J_{V}(g_i\times\id)$ has a nonempty interior and hence $J_{V}$ has a nonempty interior.

 \noindent{\bf Claim 1}.  $(\pi\times\id)(W)$ is dense in $Y\times Z$, where $W$ is the interior of $J_{V}$.
\begin{proof}[Proof of Claim 1]
Note that the interior of $J_V(g\times \id)$ is $W(g\times\id)$ for each $g\in G$.  Since $J_{V}(g_1\times\id)\cup\cdots\cup J_V(g_n\times\id)=X\times Z$, we conclude that $W':=\bigcup_{i=1}^{n}W(g_i\times\id)$ is dense in $X\times Z$. Clearly, $(\pi\times\id)(W)=(\pi\times\id)(W')$. Since $\pi\times\id$ is surjective, we conclude that $(\pi\times\id)(W)$ is dense in $Y\times Z$.
\end{proof}
Now it follows from Claim 1 that $((\pi_{Y}\circ \pi)\times \id)(W)$ is dense in $Y_{eq}\times Z$, where $\pi_{Y}: Y\rightarrow Y_{eq}$ is the factor map. Since $X_{eq}=Y_{eq}$, we have that $\pi_{X}=\pi_{Y}\circ \pi$,where $\pi_{Y}: Y\rightarrow Y_{eq}$ is the factor map. Thus $(\pi_{X}\times \id)(W)$ is dense in $X_{eq}\times Z$. Clearly, $W$ is $T\times S$-invariant, since $J_{V}$ is $T\times S$-invariant. By Lemma \ref{density of inv open}, $W$ is dense in $X\times Y$. Since $J_{V}$ is closed, we conclude that $J_{V}=X\times Z$.

There is a sequence $(V_k)$ of closed neighborhoods of $e_{G}$ such that $\{e_{G}\}=\bigcap_{k=1}^{\infty}V_{k}$. Then we have $J=\bigcap_{k=1}^{\infty}J_{V_k}=X\times Z$. This shows that $X\perp_{Q}Z$.
\end{proof}

\begin{lem}\label{group ext2}
Let $(X,T)$ and $(Z,S)$ be minimal systems. Suppose that $\pi: X\rightarrow Y$ is a group extension by a compact group $G$. If $Y\perp_{Q} Z$  and $\pi': X_{eq}\rightarrow Y_{eq}$ is also a group extension by $G$ such that the following commuting diagram holds, then $X\perp_{Q} Z$.
\[
\begin{tikzcd}
X \arrow[r, "\pi"] \arrow[d, "\pi_{X}"']
  & Y \arrow[d, "\pi_Y"] \\
X_{eq} \arrow[r, "\pi'"] & Y_{eq}
\end{tikzcd}
\]
\end{lem}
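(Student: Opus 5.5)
Given a joining $J$ of $X$ and $Z$ that projects onto $X_{eq}\times Z_{eq}$, I will show $J=X\times Z$ by the translation argument of Lemma \ref{group ext1}. The new difficulty is that $(\pi\times\id)(J)$ need not project onto $Y_{eq}\times Z_{eq}$ in a way that helps; rather, quasi-disjointness of $Y$ and $Z$ must be combined with the group structure on $X_{eq}$.

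First, $\widetilde J=(\pi\times\id)(J)$ is a joining of $Y$ and $Z$. Because $\pi_Y\circ\pi=\pi'\circ\pi_X$, its image under $\pi_Y\times\pi_Z$ is $(\pi'\times\id)(X_{eq}\times Z_{eq})=Y_{eq}\times Z_{eq}$. So $Y\perp_Q Z$ gives $\widetilde J=Y\times Z$. The same holds for every translate $J_g:=J(g\times\id)$, $g\in G$: the $G$-action commutes with $T$, so $J_g$ is again a joining. Its image in $X_{eq}\times Z_{eq}$ is the translate of $X_{eq}\times Z_{eq}$ by the induced action of $g$ on $X_{eq}$, which is everything. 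Hence $(\pi\times\id)(J_g)=Y\times Z$.

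Next, I repeat the construction of Lemma \ref{group ext1}. Fix a closed neighborhood $V$ of $e_G$ and set $J_V=\bigcup_{g\in V}J_g$, which is closed and invariant. Choose $g_1,\dots,g_n$ with $G=Vg_1\cup\cdots\cup Vg_n$. Then $\bigcup_i J_V(g_i\times\id)\supset\bigcup_{g\in G}J_g=(\pi\times\id)^{-1}(Y\times Z)=X\times Z$, using that fibres of $\pi$ are $G$-orbits and $(\pi\times\id)(J)=Y\times Z$. By Baire, the interior $W$ of $J_V$ is nonempty, invariant, and $\bigcup_i W(g_i\times\id)$ is dense.

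The main obstacle is upgrading this to density of $W$ itself. In Lemma \ref{group ext1}, the equality $X_{eq}=Y_{eq}$ made $(\pi_X\times\id)(W)$ dense in $X_{eq}\times Z$ directly, after which Lemma \ref{density of inv open} finished the argument. Here the translates $g_i$ act nontrivially on $X_{eq}$, so $(\pi_X\times\id)(W)$ is only known to have dense union of finitely many $G$-translates. I would instead work on $X_{eq}\times Z$ and exploit its structure. The set $\Lambda=(\pi_X\times\id)(J_V)$ is closed and $T\times S$-invariant, and the projection of $J$ to $X_{eq}\times Z$ is a joining whose further projection is onto $X_{eq}\times Z_{eq}$.

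My first attempt is to show that this joining is all of $X_{eq}\times Z$, so that $J\supset$ points over every $(w,z)$. For this I would use Lemma \ref{eq-RP}: since $X_{eq}$ is equicontinuous, any orbit closure in $X_{eq}\times Z$ containing $(w,z)$ contains $\{w\}\times{\bf RP}[z]$. The projection of $J$ therefore contains $\{w\}\times{\bf RP}[z]$ whenever it contains $(w,z)$, and since it maps onto $X_{eq}\times Z_{eq}$ it equals $X_{eq}\times Z$.

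With $(\pi_X\times\id)(J)=X_{eq}\times Z$, each $(\pi_X\times\id)(J_g)$ is also everything. The same Baire argument then gives $W$ with $(\pi_X\times\id)(W)$ dense in $X_{eq}\times Z$, and Lemma \ref{density of inv open} yields $J_V=X\times Z$. Finally, intersecting over a shrinking sequence $V_k\downarrow\{e_G\}$ gives $J=\bigcap_k J_{V_k}=X\times Z$.
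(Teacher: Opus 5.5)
\textbf{There is a genuine gap, at the step you yourself flagged as the main obstacle: the proposed fix does not overcome it.}

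The Baire step shows only that $\bigcup_i W(g_i\times\id)$ is dense. Hence $\bigcup_i(\pi_X\times\id)\bigl(W(g_i\times\id)\bigr)$ is dense in $X_{eq}\times Z$. But $(\pi_X\times\id)\bigl(W(g_i\times\id)\bigr)$ is the translate of $(\pi_X\times\id)(W)$ by the rotation through which $g_i$ acts on $X_{eq}$.

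\begin{itemize}
\item In Lemma~\ref{group ext1} these translates all coincide, because $X_{eq}=Y_{eq}$ gives $\pi_X=\pi_Y\circ\pi$, which is constant on $G$-orbits.
\item Here, by the very hypothesis that $\pi'$ is a $G$-extension, every $g_i\neq e_G$ acts on $X_{eq}$ by a nontrivial rotation. So the Baire step gives no reason for $(\pi_X\times\id)(W)$ itself to be dense.
\end{itemize}

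Knowing $(\pi_X\times\id)(J_g)=X_{eq}\times Z$ for every $g$ does not help. That is a statement about the closed sets $J_g$ and $J_V$, which can have full image while having small or empty interior; $J$ itself is such a set. Lemma~\ref{density of inv open}, by contrast, needs density of the image of the \emph{open} set $W$. In fact $(\pi_X\times\id)(J)=X_{eq}\times Z$ holds, via Lemma~\ref{eq-RP}, for every joining projecting onto $X_{eq}\times Z_{eq}$, so it carries no information specific to this setting. Your argument also never uses the free action of $G$ on $X_{eq}$. The one step where Lemma~\ref{group ext1} used $X_{eq}=Y_{eq}$ has simply not been replaced by anything.

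\textbf{The paper takes a different route, through Theorem~\ref{main1}.}
\begin{enumerate}
\item Pick $v\in Eq(Y,Z)$ such that $\gamma'^{-1}(v)$ has a unique minimal set $N$, and lift it to $u\in\psi^{-1}(v)$.
\item Any two minimal sets $M_1,M_2\subset\gamma^{-1}(u)$ both map onto $N$ under $\pi\times\id$. Hence $M_2=M_1(g\times\id)$ for some $g\in G$.
\item Freeness of $G$ on $X_{eq}$ is then invoked to force $g=e_G$.
\end{enumerate}

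If you switch to that route, be careful at the last step. Comparing the images of $M_1$ and $M_2$ in $X_{eq}\times Z_{eq}$ only shows that $g$ acts trivially on $Eq(X,Z)$, not on $X_{eq}$. For example, take $Y=Z$ to be the skew product $(s,t)\mapsto(s+\alpha,t+s)$ on $\mathbb{T}^2$, take $X=Y\times R_\lambda$, and assume $1,\alpha,\lambda$ are rationally independent. Then:
\begin{itemize}
\item $Eq(X,Z)=Eq(Y,Z)=(\mathbb{T},R_\alpha)$;
\item the fiber $\gamma'^{-1}(\lambda)\cong Y\times R_\lambda$ is minimal;
\item the fiber $\gamma^{-1}(\lambda)\cong Y\times R_{(\lambda,\lambda)}$ has infinitely many minimal subsets.
\end{itemize}
So $v$ cannot be chosen arbitrarily, and some further argument, such as a more careful choice of $v$, is needed on that route as well.
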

\begin{proof}
Let $\gamma: X\times Z \rightarrow Eq(X,Z)$ and $\gamma': Y\times Z\rightarrow Eq(Y,Z)$ as before. Since $Y\perp_{Q} Z$, it follows from Theorem \ref{main1} that there is some $v\in Eq(Y,Z)$ such that $\gamma'^{-1}(v)$ has a unique minimal set  $N$. Let $\psi: Eq(X,Z)\rightarrow Eq(Y,Z)$ be the factor map and take $u\in \psi^{-1}(v)$.

We claim that $\gamma^{-1}(u)$ has a unique minimal set. Then $X\perp_{Q} Z$ follows from Theorem \ref{main1}.  To the contrary, suppose there are distinct minimal sets $M_1$ and $M_2$ contained in $\gamma^{-1}(u)$. Since $\pi_{X}\times\pi_{Z}(\gamma^{-1}(u))$ is a minimal set in $X_{eq}\times Z_{eq}$, we have $\pi_{X}\times\pi_{Z}(M_1)=\pi_{X}\times\pi_{Z}(M_2)$. Take $(x_1,z_1)\in M_1$. Then there is some $(x_2,z_2)\in M_2$ with $\pi_{X}(x_1)=\pi_{X}(x_2)$ and $\pi_{Z}(z_1)=\pi_{Z}(z_2)$. Note that both $\pi\times\id(M_1)$ and $\pi\times\id(M_2)$ are minimal sets in $\gamma'^{-1}(v)$. Thus $\pi\times\id(M_1)=\pi\times\id(M_2)=N$. This implies that $z_1=z_2$ and  there is some $g\in G$ such that $x_2=x_1g$. Since $\pi': X{eq}\rightarrow Y_{eq}$ is also a group extension by group $G$, $(x_1,x_1g)\notin{\bf RP}(X)$ unless $g=e_{G}$. But $M_1$ and $M_2$ are distinct, we have $x_1\neq x_2$ and hence $(x_1,x_2)\neq{\bf RP}(X)$. This contradicts that $\pi_{X}(x_1)=\pi_{X}(x_2)$. Therefore, $\gamma^{-1}(u)$ has a unique minimal set.
\end{proof}

Now combining Lemma \ref{decom of group ext}, \ref{group ext1} and \ref{group ext2}, we conclude that quasi-disjointness is preserved by group extensions.
\begin{prop}\label{QD-group ext}
Let $(X,T)$ and $(Z,S)$ be minimal systems. Suppose that $\pi: X\rightarrow Y$ is a group extension by a compact group $G$. If $Y\perp_{Q} Z$  then $X\perp_{Q} Z$.
\end{prop}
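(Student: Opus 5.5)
The plan is to factor the group extension $\pi$ into two steps, each of which is handled by one of the two preceding lemmas. Apply Lemma \ref{decom of group ext} to $\pi: X\rightarrow Y$. This gives a closed normal subgroup $H$ of $G$ and an intermediate factor
\[
X\overset{\phi}{\rightarrow} Z'\overset{\psi}{\rightarrow} Y ,\qquad \pi=\psi\circ\phi,
\]
where I write $Z'$ for the intermediate system to avoid a clash with the given system $Z$. The decomposition has the following properties:
\begin{itemize}
\item[(a)] $\phi$ is an $H$-extension;
\item[(b)] $\psi$ is an $H\backslash G$-extension;
\item[(c)] $X_{eq}=Z'_{eq}$;
\item[(d)] the induced map $\psi': Z'_{eq}\rightarrow Y_{eq}$ is also an $H\backslash G$-extension, and the square with $\pi_{Z'}$ and $\pi_Y$ commutes.
\end{itemize}

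First I pass from $Y$ to $Z'$. The extension $\psi: Z'\rightarrow Y$ is a group extension by the compact group $H\backslash G$, and $\psi'$ is a group extension of the maximal equicontinuous factors by the same group, compatibly with the factor maps. These are exactly the hypotheses of Lemma \ref{group ext2}, with $Z'$ in place of $X$. Together with $Y\perp_Q Z$, that lemma gives $Z'\perp_Q Z$.

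Second, I pass from $Z'$ to $X$. The extension $\phi: X\rightarrow Z'$ is a group extension by the compact group $H$, and $X_{eq}=Z'_{eq}$. Applying Lemma \ref{group ext1} to $\phi$, with $Z'\perp_Q Z$ from the first step, gives $X\perp_Q Z$.

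The main obstacle is making sure the hypotheses of the two lemmas are literally satisfied, rather than the composition itself. Three points need checking.
\begin{itemize}
\item The quotient $H\backslash G$ must act on $Z'=X/H$ as a compact group, commuting with $T$ and with $Y=Z'/(H\backslash G)$. This holds because $H$ is normal, so $H\backslash G$ is a compact group and its action $(xH)\cdot(Hg)=xgH$ is well defined and commutes with $T$.
\item The proof of Lemma \ref{group ext1} assumes the group acts freely. If the $H$-action on $X$ is not free, I will replace $H$ by its quotient by the kernel of the action, or use the same covering argument with $J_g$ indexed by $H$; that argument does not really use freeness.
\item The proof of Lemma \ref{group ext2} uses that $(x,xg)\notin{\bf RP}$ for $g\neq e$. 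This is exactly what Lemma \ref{decom of group ext} proves at the level of $Z'_{eq}$.
\end{itemize}
With these checks in place, the two lemmas chain together to give the proposition.
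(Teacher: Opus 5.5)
Your proposal is exactly the paper's proof: decompose $\pi$ by Lemma~\ref{decom of group ext}, apply Lemma~\ref{group ext2} to the $H\backslash G$-extension $\psi$ to get $Z'\perp_Q Z$, then apply Lemma~\ref{group ext1} to the $H$-extension $\phi$ using $X_{eq}=Z'_{eq}$; your side checks on the induced $H\backslash G$-action and on freeness being inessential are correct. Be aware, though, that you and the paper both rely on Lemma~\ref{group ext2}, whose printed proof breaks at the claim that the point $(x_2,z_2)\in M_2$ with $\pi_X(x_2)=\pi_X(x_1)$ can also be taken with $z_2=z_1$ and $x_2=x_1g$. For instance, take the torus maps $Y\colon(t,s)\mapsto(t+\theta,s+t)$, $X\colon(t,s,w)\mapsto(t+\theta,s+t,w+\alpha)$ and $Z\colon(t,r)\mapsto(t+\theta,r+t-\alpha)$, with $1,\theta,\alpha$ rationally independent, where $X\to Y$ forgets $w$ and $X_{eq}\to Y_{eq}$ is $(t,w)\mapsto t$. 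Then the fibre $\{t_Y=t_Z\}$ of $\gamma'$ is minimal, while the fibre $\{t_X=t_Z\}$ of $\gamma$ over the same point carries the nonconstant invariant function $s-r-w$. So that step needs a different argument, even though $X\perp_Q Z$ does still hold in this example via generic fibres.
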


Finally, combining Lemma \ref{equi-group ext} and Proposition \ref{QD-group ext} we conclude that  quasi-disjointness is preserved by equicontinuous extensions.
\begin{thm}\label{QD-equi ext}
Let $(X,T)$ and $(Z,S)$ be minimal systems. Suppose that $\pi: X\rightarrow Y$ is an equicontinuous extension. If $Y\perp_{Q} Z$ then $X\perp_{Q} Z$.
\end{thm}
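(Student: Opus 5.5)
The plan is to reduce Theorem \ref{QD-equi ext} to Proposition \ref{QD-group ext} through Lemma \ref{equi-group ext}, and then pass to a factor using Theorem \ref{QD to factor}.

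Let $\pi: X\rightarrow Y$ be an equicontinuous extension of minimal systems with $Y\perp_{Q} Z$. By Lemma \ref{equi-group ext} there are a minimal system $(W,R)$ and homomorphisms $\tilde{\pi}: W\rightarrow Y$, $\phi: W\rightarrow X$ such that $\pi\phi=\tilde{\pi}$ and $\tilde{\pi}$ is a compact group extension by some compact group $G$.

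We then argue in two steps.
\begin{enumerate}
\item[(i)] Since $\tilde{\pi}: W\rightarrow Y$ is a group extension of minimal systems and $Y\perp_{Q} Z$, Proposition \ref{QD-group ext} gives $W\perp_{Q} Z$.
\item[(ii)] Since $\phi: W\rightarrow X$ is a factor map, Theorem \ref{QD to factor} applied to $W\perp_Q Z$ gives $X\perp_{Q} Z$.
\end{enumerate}
This finishes the proof.

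The only points to check are the hypotheses. First, $W$ is minimal; this is part of the conclusion of Lemma \ref{equi-group ext}. Second, the roles of $Y$ and $Z$ in Theorem \ref{QD to factor} match the present notation, which is only a relabeling.

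The real content therefore lies in Proposition \ref{QD-group ext}, which we take as given. Its proof combines Lemma \ref{decom of group ext}, Lemma \ref{group ext1} and Lemma \ref{group ext2} as follows. Write $\tilde{\pi}$ as $W\rightarrow W/H\rightarrow Y$, where $H$ is the normal subgroup from Lemma \ref{decom of group ext}.
\begin{enumerate}
\item[(a)] The second stage $W/H\rightarrow Y$ is an $H\backslash G$-extension whose equicontinuous factors also form an $H\backslash G$-extension. So Lemma \ref{group ext2} gives $(W/H)\perp_Q Z$.
\item[(b)] The first stage $W\rightarrow W/H$ is an $H$-extension with $W_{eq}=(W/H)_{eq}$. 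So Lemma \ref{group ext1} gives $W\perp_Q Z$.
\end{enumerate}

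If one step needed care, it would be in Lemma \ref{group ext1}, where freeness of the $G$-action was assumed. There one may first replace $W$ by a free group extension, or simply note that the argument only uses the translates $J(g\times\id)$. For the theorem as stated, however, no obstacle remains beyond citing these results.
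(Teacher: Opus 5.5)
Your proposal is correct and is exactly the paper's proof: Lemma~\ref{equi-group ext} supplies a minimal system $W$ with a factor map $\phi: W\rightarrow X$ such that $\pi\phi$ is a compact group extension of $Y$, Proposition~\ref{QD-group ext} then gives $W\perp_{Q} Z$, and Theorem~\ref{QD to factor} passes this down to $X$.

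One caveat, which concerns only your side remark and not your reduction (you, like the paper, simply cite the proposition): the delicate point inside Proposition~\ref{QD-group ext} is in Lemma~\ref{group ext2}, not Lemma~\ref{group ext1}. There, the deduction ``$z_1=z_2$ and $x_2=x_1g$'' from $(\pi\times\id)(M_1)=(\pi\times\id)(M_2)=N$ is not justified as written; arranging $z_2=z_1$ instead only shows that $g$ acts on $X_{eq}$ by a translation in $\ker\phi_X$, not that $g=e_G$.
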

\begin{proof}
By Lemma \ref{equi-group ext}, there is a minimal system $(\tilde{Z}, T)$ and  homomorphisms $\tilde{\pi}: \tilde{Z}\rightarrow Y$ and $\phi: \tilde{Z}\rightarrow X$ with $\pi\phi=\tilde{\pi}$ and $\tilde{\pi}$ is a compact group extension. By Proposition \ref{QD-group ext}, one has that $\tilde{Z}\perp_{Q} Z$. Further, by Theorem \ref{QD to factor}, we have $X\perp_{Q}Z$. 
\end{proof}

\section{Systems (strongly) quasi-disjoint from all minimal systems}
In this section, based on the preparation in the previous sections, we are ready to show Theorem \ref{main3}, i.e.  minimal {\bf PI} systems are quasi-disjoint from all minimal systems and minimal {\bf AI} systems are strongly quasi-disjoint from all minimal systems.

\subsection{Systems quasi-disjoint from all minimal systems}

Since quasi-disjointness is preserved under equicontinuous extensions (Theorem \ref{QD-equi ext}), proximal extensions (Lemma \ref{QD for prox ext}), and taking factors (Theorem \ref{QD to factor}), the structure theory for {\bf PI} systems implies that it suffices to prove quasi-disjointness is also preserved under taking inverse limits.


\begin{lem}\label{inv limit}
Let $(X,T)=\underset{\longleftarrow}{\lim}(X_n, T)$ be an inverse limit of minimal systems and $(Y,S)$ be a minimal system. If $X_{n}\perp_{Q} Y$ for each $n\in\mathbb{N}$, then $X\perp_{Q} Y$.
\end{lem}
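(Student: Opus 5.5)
Let $p_n: X\to X_n$ denote the projections and $\pi_{X}, \pi_{X_n}, \pi_Y$ the maps to maximal equicontinuous factors. The plan is to work directly with the definition. Fix a joining $J$ of $X$ and $Y$ with $(\pi_X\times\pi_Y)(J)=X_{eq}\times Y_{eq}$, and show $J=X\times Y$ by showing that every projection $J_n:=(p_n\times\id)(J)$ equals $X_n\times Y$. Since $X$ is the inverse limit, sets of the form $p_n^{-1}(U)\times V$ ($U\subset X_n$, $V\subset Y$ open) form a base of $X\times Y$. Once $J_n=X_n\times Y$ for all $n$, $J$ meets every such basic open set, so the closed set $J$ is dense and hence $J=X\times Y$.

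The key step is checking that $J_n$ is admissible, that is, a joining of $X_n$ and $Y$ projecting onto $(X_n)_{eq}\times Y_{eq}$. Clearly $J_n$ is closed, invariant, and has full coordinate projections. Since $p_n$ is a factor map between minimal systems, $(X_n)_{eq}$ is a factor of $X_{eq}$ via some $q_n$ with $\pi_{X_n}\circ p_n=q_n\circ\pi_X$. Therefore
\[
(\pi_{X_n}\times\pi_Y)(J_n)=(q_n\times\id)(\pi_X\times\pi_Y)(J)=(q_n\times \id)(X_{eq}\times Y_{eq})=(X_n)_{eq}\times Y_{eq}.
\]
This is the same argument used in the proof of Lemma~\ref{QD for prox ext}, where $(\phi\times\id)(J')$ was shown to be admissible. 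The hypothesis $X_n\perp_Q Y$ then gives $J_n=X_n\times Y$.

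I expect no real obstacle. The only point needing care is the density argument in the first paragraph. Given a nonempty basic open set $p_n^{-1}(U)\times V$, pick $(u,v)\in U\times V$. Since $J_n=X_n\times Y$, there is $(x,y)\in J$ with $p_n(x)=u$ and $y=v$, so $J\cap (p_n^{-1}(U)\times V)\neq\emptyset$. The open sets $p_n^{-1}(U)$ do form a base of $X$ in the inverse limit topology, because the bonding maps are compatible: a basic set from finitely many coordinates can be pulled back to a single larger index. Hence $J$ is dense and closed, so $J=X\times Y$ and $X\perp_Q Y$. As an alternative, one could use Theorem~\ref{main1} with the compatible maps $Eq(X,Y)\to Eq(X_n,Y)$ of Lemma~\ref{MCEQ for factors}, but the direct argument avoids having to identify $Eq(X,Y)$ as an inverse limit.
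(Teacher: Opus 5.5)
Your proof is correct and follows the same route as the paper: project $J$ to $J_n=(p_n\times\id)(J)$, use the factor map $X_{eq}\to (X_n)_{eq}$ to see that $J_n$ projects onto $(X_n)_{eq}\times Y_{eq}$, apply $X_n\perp_Q Y$ to get $J_n=X_n\times Y$, and conclude $J=X\times Y$. Your density argument with the basic open sets $p_n^{-1}(U)\times V$ simply spells out the final step that the paper asserts as clear, namely that $J$ is the inverse limit of the $J_n$.
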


\begin{proof}
Let $J$ be a joining of $X$ and $Y$ that projects onto $X_{eq}\times Y_{eq}$. For each $n\in\mathbb{N}$, let $\phi_n: X\rightarrow X_n$ be the canonical factor map.  It is clear that $J_n:=(\phi_n\times \id)(J)$ is also a joining of $X_n\times Y$,  for each $n\in\mathbb{N}$. Since $X_n$ is a factor of $X$ for each $n\in\mathbb{N}$, $(X_n)_{eq}$ is also a factor of $X_{eq}$.  Thus each $J_n$ projects onto $(X_n)_{eq}\times Y$. By the quasi-disjointness of $X_n$ with $Y$, one has $J_{n}=X_n\times Y$. Clearly, $J=\underset{\longleftarrow}{\lim} J_n$. Thus $J=X\times Y$ and hence $X\perp_{Q} Y$.
\end{proof}

\begin{thm}\label{c-PI}
Every minimal {\bf PI} system is quasi-disjoint from any minimal system.
\end{thm}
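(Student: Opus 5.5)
Fix a minimal system $(Y,S)$ and a minimal {\bf PI} system $(X,T)$. By definition there is a minimal system $(\tilde{X},T)$ and a proximal extension $\theta:\tilde{X}\rightarrow X$ such that $\tilde{X}\rightarrow\{pt\}$ is a strictly {\bf PI}-extension. Hence there are a countable ordinal $\eta$ and a tower $\{X_\alpha\}_{\alpha\leq\eta}$ with $X_0=\{pt\}$ and $X_\eta=\tilde{X}$. In this tower each successor map $X_{\alpha+1}\rightarrow X_\alpha$ is equicontinuous or proximal, and $X_\alpha=\underset{\longleftarrow}{\lim}_{\beta<\alpha}X_\beta$ at limit ordinals. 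Since $X$ is a factor of $\tilde{X}$, Theorem \ref{QD to factor} reduces the statement to proving $\tilde{X}\perp_Q Y$. All $X_\alpha$ are factors of the minimal system $\tilde{X}$, so they are minimal, and the earlier results apply to them.

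I will prove $X_\alpha\perp_Q Y$ for all $\alpha\leq\eta$ by transfinite induction. For the base case, $X_0$ is a point, so $(X_0)_{eq}$ is trivial and any joining of $X_0$ and $Y$ has full projection onto $Y$; thus it equals $X_0\times Y$, and $X_0\perp_Q Y$ holds trivially. For the successor step, if $X_{\alpha+1}\rightarrow X_\alpha$ is equicontinuous I apply Theorem \ref{QD-equi ext}, and if it is proximal I apply Lemma \ref{QD for prox ext}.

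The limit step is the main obstacle. Lemma \ref{inv limit} is stated for sequences $(X_n)_{n\in\mathbb{N}}$. At a limit ordinal $\alpha$, however, $\alpha$ is countable, so I choose a cofinal increasing sequence $\beta_1<\beta_2<\cdots$ with $\sup\beta_n=\alpha$. Then $X_\alpha=\underset{\longleftarrow}{\lim}_n X_{\beta_n}$, the transition maps being $\pi_{\beta_{n+1}\beta_n}$, and compatibility follows from condition (2) of the tower. Each $X_{\beta_n}\perp_Q Y$ by the induction hypothesis, so Lemma \ref{inv limit} gives $X_\alpha\perp_Q Y$.

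I should also check the proof of Lemma \ref{inv limit} itself. There, the identity $J=\underset{\longleftarrow}{\lim}J_n$ means the following: a point $(x,y)$ with $(\phi_n x,y)\in J_n$ for all $n$ lies in $J$. This holds because $J$ is closed. Indeed, for each $n$ pick $(x_n',y)\in J$ with $\phi_n x_n'=\phi_n x$; then $x_n'\to x$ in the inverse limit topology, so $(x,y)\in J$. With this check the induction reaches $\alpha=\eta$, which gives $\tilde{X}\perp_Q Y$ and hence $X\perp_Q Y$.
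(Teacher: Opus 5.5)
Your proposal is correct and is the paper's proof. You pass to the strictly {\bf PI} proximal extension $\tilde{X}$, build it up from the trivial system using Theorem \ref{QD-equi ext}, Lemma \ref{QD for prox ext} and Lemma \ref{inv limit}, and then pass back down to $X$ by Theorem \ref{QD to factor}. Your explicit transfinite induction, your reduction of each countable limit stage to a cofinal sequence so that Lemma \ref{inv limit} applies, and your check that $J=\underset{\longleftarrow}{\lim}J_n$ because $J$ is closed, only fill in details the paper leaves implicit.
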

\begin{proof}
Let $(X,T)$ be a minimal {\bf PI} system and $(Y,S)$ be a minimal system. Then there is a minimal strict {\bf PI} system $(X', T')$ which is a proximal extension of $(X,T)$. Since a trivial system is quasi-disjoint from any minimal system and $X'$ is constructed from a trivial system by taking equicontinuous extensions, proximal extensions and inverse limits, it follows from Theorem  \ref{QD-equi ext}, \ref{QD for prox ext}, \ref{inv limit} that $X'\perp_{Q} Y$. By Theorem \ref{QD to factor} , $X\perp_{Q} Y$.
\end{proof}

To end the subsection we state a remark. Since every weakly mixing system is weakly disjoint from any minimal system, it follows from Proposition \ref{QD+WD=D} that a minimal system is quasi-disjoint from every minimal weakly mixing system if and only if it is disjoint from every minimal weakly mixing system. Glasner constructed in \cite{Glasner80} a non-{\bf PI} system that is disjoint from all minimal weakly mixing systems. In particular, it is quasi-disjoint from all minimal weakly mixing systems. In \cite{GQXY}, the authors characterize the structure of transitive systems disjoint from minimal weakly mixing systems. But we do not know how characterize minimal systems that are quasi-disjoint from all minimal weakly mixing systems.

\subsection{Systems strongly quasi-disjoint from all minimal systems}

\begin{thm}
Every minimal distal system is strongly quasi-disjoint from any minimal system.
\end{thm}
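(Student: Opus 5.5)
By Theorem \ref{char of SQ}, it suffices to find one point $z\in Eq(X,Y)$ with $\gamma^{-1}(z)$ minimal, where $(X,T)$ is minimal distal and $(Y,S)$ is an arbitrary minimal system. The plan is to exploit two facts. First, every point of a distal system is a distal point, so by \cite{Fur63} every point $(x,y)\in X\times Y$ is a minimal point of $T\times S$; hence $X\times Y$ is a disjoint union of minimal sets. Second, Corollary \ref{open-trans} applies, since $\pi_X: X\rightarrow X_{eq}$ is a homomorphism between minimal distal systems and is therefore open by Lemma \ref{dis-open}.

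First, Corollary \ref{open-trans} gives a dense $G_\delta$ set $\Omega\subset X\times Y$. For each $(x,y)\in\Omega$, the set
\[M_{x,y}=(\pi_X\times\pi_Y)^{-1}\bigl(\overline{orb_{T\times S}(\pi_X(x),\pi_Y(y))}\bigr)\]
is the orbit closure of $(x,y)$.

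Second, by Remark \ref{rem-MCEQ}, $M_{x,y}=\gamma^{-1}(z)$ with $z=\gamma(x,y)$.

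Third, $(x,y)$ is a minimal point, since every point of $X\times Y$ is. Its orbit closure is therefore minimal, so $\gamma^{-1}(z)$ is a minimal set. By Theorem \ref{char of SQ}, (1)$\Rightarrow$(3), we conclude $X\perp_{SQ}Y$.

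The main obstacle is already absorbed into Corollary \ref{open-trans}, and through it into Lemma \ref{saturate}, which needs the openness of $\pi_X$. The only points to verify are that openness holds here and that every point of a distal minimal system is distal, so that $(x,y)$ is minimal for every $y$ in any minimal $Y$; both are standard. A fallback, if Corollary \ref{open-trans} were unavailable, would be to show directly that orbit closures are minimal and invariant under $T^m\times S^n$-translations along fibers of $\gamma$. That fallback seems harder, so I would rely on the corollary.
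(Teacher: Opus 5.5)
Your proposal is correct and follows essentially the same route as the paper's proof: openness of $\pi_X$ via Lemma~\ref{dis-open}, then Corollary~\ref{open-trans} to get $M_{x,y}=\overline{orb_{T\times S}(x,y)}$, then Remark~\ref{rem-MCEQ} to identify this set with $\gamma^{-1}(z)$, and finally minimality of $(x,y)$ because every point of a distal system is a distal point. Your explicit appeal to Theorem~\ref{char of SQ} to pass from one minimal fiber to a residual set of minimal fibers makes precise a step the paper leaves implicit.
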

\begin{proof}
Let $(X,T)$ be a minimal distal system and $(Y,S)$ be a minimal system. Let $\gamma: X\times Y\rightarrow Eq(X,Y)$ be as defined in subsection \ref{sub-MCEF}.
Since $X$ is distal, the factor map $\pi_{X}: X\rightarrow X_{eq}$ is open (Lemma \ref{dis-open}). By Corollary \ref{open-trans}, there is dense $G_{\delta}$ subset $\Omega\subset X\times Y$ such that for each $(x,y)\in\Omega$,
\begin{enumerate}
\item $M_{x,y}:=(\pi_{X}\times\pi_{Y})^{-1}\left(\overline{orb_{T\times S}(\pi_{X}(x),\pi_{Y}(y))} \right)$ is a transitive subsystem of $X\times Y$ and
\item $(x,y)$ is a transitive point of this subsystem $M_{x,y}$.
\end{enumerate}
By remark \ref{rem-MCEQ}, we know that $\gamma^{-1}(z)=M_{x,y}$ for each $(x,y)\in X\times Y$, where $z=\gamma(x,y)$. Since $X$ is distal, it follows from \cite[Theorem 9.11]{Fur81} that $(x,y)$ is a minimal point in $X\times Y$. Thus $\gamma^{-1}(z)$ is minimal for each $(x,y)\in \Omega$ with $z=\gamma(x,y)$. This shows that $X\perp_{SD} Y$.
\end{proof}

We will strength the above conclusion using a different approach.


\begin{thm} \label{twostate}Let $(X,T)$ and $(Y,S)$ be minimal systems. 
\begin{enumerate}
\item If $X\perp_{SQ} Y$ then the set of minimal points of $X\times Y$ is residual in $X\times Y$. 
\item If $(X,T)$ is {\bf PI} and the set of minimal points of $X\times Y$ is residual in $X\times Y$, then $X\perp_{SQ} Y$.
\end{enumerate}
\end{thm}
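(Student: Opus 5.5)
For part (1), I will use Theorem \ref{char of SQ}: since $X\perp_{SQ} Y$, there is a dense $G_\delta$ set $\Omega\subset Eq(X,Y)$ such that $\gamma^{-1}(z)$ is minimal for every $z\in\Omega$. Every point of a minimal set is a minimal point, so each point of $\gamma^{-1}(\Omega)$ is a minimal point of $X\times Y$. By Lemma \ref{semiopen}, $\gamma$ is semiopen. Lemma \ref{semi-1} then shows that $\gamma^{-1}(\Omega)$ is residual in $X\times Y$. This proves (1).

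For part (2), I will first reduce to the case where $\pi_X$ is open. Let $\theta:\tilde X\to X$ be a proximal extension with $\tilde X$ strictly {\bf PI}. Strong quasi-disjointness passes to factors (Theorem \ref{SQD by factors}), so it would suffice to show $\tilde X\perp_{SQ} Y$. However, the hypothesis on residual minimal points must then be transferred from $X\times Y$ to $\tilde X\times Y$. This transfer is problematic, so I would rather work with $X$ directly.

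The intended route is a direct use of Lemma \ref{saturate}. Let $\Sigma_1\subset X\times Y$ be the residual set from Lemma \ref{saturate}, so that ${\bf RP}[x]\times{\bf RP}[y]\subset\overline{orb(x,y)}$ for $(x,y)\in\Sigma_1$. Let $\Sigma_2$ be the residual set of minimal points. Fix $(x,y)\in\Sigma_1\cap\Sigma_2$ and put $z=\gamma(x,y)$. The orbit closure $M=\overline{orb(x,y)}$ is a minimal subset of $\gamma^{-1}(z)$ containing ${\bf RP}[x]\times{\bf RP}[y]$. By Remark \ref{rem-MCEQ}, $\gamma^{-1}(z)$ is the union of the sets ${\bf RP}[x']\times{\bf RP}[y']$ over the pairs $(x',y')$ with $(\pi_X x',\pi_Y y')$ in the orbit closure of $(\pi_X x,\pi_Y y)$. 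It then remains to show that $M$ contains every such fiber product; this would give $M=\gamma^{-1}(z)$, so $\gamma^{-1}(z)$ is minimal and Theorem \ref{char of SQ}(1) applies.

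When $\pi_X$ is open, Corollary \ref{open-trans} does exactly this: the fibre map $z\mapsto \pi_X^{-1}(z)$ is continuous, and pushing ${\bf RP}[x]\times\{y\}$ along the orbit gives Claim 2 there. In general $\pi_X$ is not open, and this is where the {\bf PI} hypothesis must enter. This is the main obstacle. The plan is to pass to the open {\bf PI} modification. By the structure theorem there is an almost one-to-one (proximal) modification $X^*\to X$ such that the map $X^*\to X^*_{eq}=X_{eq}$ is open and $X^*$ is still {\bf PI}. I then need two facts. First, the residual-minimal-points hypothesis should lift to $X^*\times Y$; this should follow because the set of points of $X$ with singleton fibre is residual, combined with Lemma \ref{semi-2} and the fact that a point whose fibre is a singleton lifts minimal points. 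Second, on $X^*$ the openness argument gives that $\gamma^{*-1}(z)$ is minimal for a residual set of $z$. Since $Eq(X^*,Y)=Eq(X,Y)$, Theorem \ref{SQD by factors} then transfers strong quasi-disjointness back down to $X$. I expect the lifting of minimal points to be the delicate step. I would handle it by using that the minimal points of $X^*\times Y$ are exactly the preimages of minimal points under a proximal extension, since a proximal extension together with minimality of $(x,y)$ yields a minimal lift as in Claim 1 of Lemma \ref{QD for prox ext}.
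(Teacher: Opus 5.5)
Your part (1) is exactly the paper's argument: Theorem \ref{char of SQ}, semiopenness of $\gamma$ (Lemma \ref{semiopen}), and Lemma \ref{semi-1}.

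Part (2) has a genuine gap at the step you flagged as the main obstacle, and your route does not close it. The modification you invoke does not exist in general. There need not be any almost one-to-one extension $X^*\to X$ for which $X^*\to X^*_{eq}=X_{eq}$ is open, even when $X$ is {\bf AI}. Take $X$ Sturmian, or any non-equicontinuous almost automorphic system, so that $\pi_X:X\to X_{eq}$ is almost one-to-one but not injective.
\begin{itemize}
\item For any almost one-to-one $X^*\to X$, the composition $X^*\to X_{eq}$ is again almost one-to-one. To see this, intersect the two singleton-fibre sets, using Lemma \ref{semi-1}.
\item An open almost one-to-one map between minimal systems is injective. Openness makes $w\mapsto \pi^{-1}(w)$ continuous into the hyperspace, so the fibre diameter is continuous. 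It vanishes on a dense set, hence everywhere.
\end{itemize}
So $X^*\to X_{eq}$ cannot be open.

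What the almost one-to-one modification theorem actually gives is an open map $X^*\to Y^*$, where $Y^*$ is an almost one-to-one extension of $X_{eq}$. But $Y^*$ is not equicontinuous and its fibres are not ${\bf RP}$-cells. Corollary \ref{open-trans} needs the open map to be $\pi_X$ itself, so that Lemma \ref{saturate} fills the fibres, and it therefore does not apply.

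A warning sign is that the {\bf PI} hypothesis is never really used in your argument: nothing after ``$X^*$ is still {\bf PI}'' depends on it. Since the modification theorem applies to every minimal system, your step, if valid, would prove (2) for all minimal $X$.

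Separately, your claim that the minimal points of $X^*\times Y$ are exactly the preimages of minimal points is false for proximal extensions. For instance, with $Y=X^*$, a proximal pair off the diagonal is never a minimal point.

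The missing idea is that you do not need the fibres to be transitive. Uniqueness of the minimal subset is enough, and this is exactly where {\bf PI} enters.
\begin{itemize}
\item By Theorem \ref{c-PI}, $X\perp_Q Y$. Theorem \ref{main1} then gives a residual $\Omega'\subset Eq(X,Y)$ such that $\gamma^{-1}(z)$ has a unique minimal subset $M_z$ for every $z\in\Omega'$.
\item Apply Lemma \ref{semi-2} to the semiopen map $\gamma$ and the residual set of minimal points. This gives a residual $\Omega''$ such that minimal points are dense in $\gamma^{-1}(z)$ for every $z\in\Omega''$.
\item Let $z\in\Omega'\cap\Omega''$. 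Every minimal point of $\gamma^{-1}(z)$ lies in $M_z$, and $M_z$ is closed. Since these points are dense, $\gamma^{-1}(z)=M_z$ is minimal.
\end{itemize}
Theorem \ref{char of SQ} then gives $X\perp_{SQ} Y$.
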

\begin{proof} (1) Assume that $X\perp_{SQ} Y$. By Theorem \ref{char of SQ} there is a residual set $\Omega'\subset Eq(X,Y)$ such that for any $z\in
\Omega'$, $\gamma^{-1}(z)$ is a minimal subset of $X\times Y$. Set $\Omega=\gamma^{-1}(\Omega')$. Then $\Omega$ is residual in $X\times Y$ by Lemma \ref{semi-1} as $\gamma$ is semiopen (Lemma \ref{semiopen}).
Note that each $(x,y)\in \Omega$ is minimal. 

\medskip
(2) Let $\Omega_1$ be the set of minimal points of $X\times Y$. Since $(X,T)$ is PI, we get that $X\perp_Q Y$ by Theorem \ref{main1}.
Thus there is a residual set $\Omega'\subset Eq(X,Y)$ such that for any $z\in \Omega'$, $\gamma^{-1}(z)$ contains a unique minimal subset of $X\times Y$. Moreover, there is a residual set $\Omega''\subset Eq(X,Y)$ such that for any $z\in \Omega''$, $\gamma^{-1}(z)\cap \Omega_1$ is a dense set of $\Omega_1$ by Lemma \ref{semi-2}. Set 
$$\Omega=\gamma^{-1}(\Omega'\cap \Omega'').$$ Then $\Omega$ is residual in $X\times Y$ by Lemma \ref{semi-1} as $\gamma$ is semiopen .

Fix $(x,y)\in \Omega$. Then $\gamma(x,y)\in \Omega'\cap \Omega''$. It follows that $W=\gamma^{-1}\gamma(x,y)$ contains a unique minimal subset of $X\times Y$ and the set of minimal points of $X\times Y$ in dense in $W$. Thus, $W$ is minimal. Put $z=\gamma(x,y)$. Then Theorem \ref{char of SQ} implies that $X\perp_{SQ} Y$.
This ends the proof.
\end{proof}

\begin{cor} \label{c-AI} Let $(X,T)$ be a minimal {\bf PI} system. Then the set of minimal points of $X\times Y$ is a residual subset of $X\times Y$ for any minimal system $(Y,S)$ if and only if $X$ is strongly quasi-disjoint from all minimal systems.

Consequently, each minimal {\bf AI} system is strongly quasi-disjoint from all minimal systems.
\end{cor}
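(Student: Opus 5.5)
The equivalence is essentially a repackaging of Theorem \ref{twostate}, and the consequence follows from Veech's description of \textbf{AI} systems. I would argue as follows.

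For the equivalence, fix a minimal \textbf{PI} system $(X,T)$. Suppose first that for every minimal $(Y,S)$ the set of minimal points of $X\times Y$ is residual. Then part (2) of Theorem \ref{twostate} applies to each such $Y$ and gives $X\perp_{SQ} Y$; hence $X$ is strongly quasi-disjoint from all minimal systems. Conversely, if $X\perp_{SQ} Y$ for every minimal $Y$, then part (1) of Theorem \ref{twostate} says that the minimal points of $X\times Y$ form a residual set. This converse direction does not even use the \textbf{PI} hypothesis.

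For the consequence, let $(X,T)$ be a minimal \textbf{AI} system. I need two facts.
\begin{enumerate}
\item $X$ is \textbf{PI}. An \textbf{AI}-extension of the trivial system is built from equicontinuous extensions, almost one-to-one extensions and inverse limits. An almost one-to-one extension of minimal systems is proximal: if $\pi(x_1)=\pi(x_2)$, pick $x_0$ with $\pi^{-1}\pi(x_0)=\{x_0\}$ and a sequence $T^{n_i}x_1\to x_0$. Passing to a subsequence, $T^{n_i}x_2$ converges to a point of $\pi^{-1}\pi(x_0)$, which must be $x_0$. So $x_1,x_2$ are proximal. Hence $X$ is itself a strictly \textbf{PI} system, with $\theta=\mathrm{id}$ as the required proximal extension, and so $X$ is \textbf{PI}.
\item For every minimal $(Y,S)$, the minimal points of $X\times Y$ form a residual set. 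By Veech \cite{Veech70}, the set $D$ of distal points of $X$ is residual in $X$. By \cite{Fur63}, $(x,y)$ is a minimal point of $X\times Y$ whenever $x\in D$ and $y\in Y$. The set $D\times Y$ is residual in $X\times Y$, being the preimage of a residual set under the open projection $X\times Y\to X$ (Lemma \ref{semi-1}).
\end{enumerate}
Applying the first part of the corollary, $X$ is strongly quasi-disjoint from all minimal systems.

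The only step needing care is the second fact. A residual set of minimal points is all that is required, not a $G_\delta$ set, which is exactly what Theorem \ref{twostate}(2) assumes, so no further regularity has to be checked.
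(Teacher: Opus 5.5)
Your proposal is correct and matches the paper's proof: the equivalence comes directly from the two parts of Theorem \ref{twostate}. The \textbf{AI} consequence combines Veech's residuality of distal points with the fact that $(x,y)$ is minimal whenever $x$ is distal, and then applies Theorem \ref{twostate}(2). Your explicit check that almost one-to-one extensions are proximal, so that an \textbf{AI} system is \textbf{PI} and Theorem \ref{twostate}(2) applies, fills in a step the paper leaves implicit.
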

\begin{proof} The first statement follows from Theorem \ref{twostate}.

To show the second statement, we note that if $(X,T)$ is {\bf AI} then the set of distal points of $X$ (denoted by $X'$) is residual, see Subsection \ref{section2.1}. This implies that for any minimal system $(Y,S)$, the set of minimal points of $X\times Y$ contains $X'\times Y$
(see Subsection \ref{section2.1}), and hence is residual in $X\times Y$. Thus, Theorem \ref{twostate}-(2) implies that $X\perp_{SQ} Y$.
\end{proof}

\begin{proof}[Proof of Theorem \ref{main3}] It follows by Theorem \ref{c-PI} and Corollary \ref{c-AI}.
\end{proof}

\section{Questions}

In this paper, we use maximal equicontinuous factors to define the quasi-disjointness. One may also use maximal distal factor to define another kinds of quasi-disjointness. But we will that show these two notions coincide.

For a minimal system $(X,T)$, we use $X_{dis}$ to denote the maximal distal factor of $X$.
\begin{defn}
Two minimal systems $(X,T)$ and $(Y,S)$ are {\it distally quasi-disjoint}, denoted by $X\perp_{DQ} Y$, if $X\times Y$ is the only joining of $X$ and $Y$ that projects onto $X_{dis}\times Y_{dis}$.
\end{defn}

\begin{thm}
Let $(X,T),(Y,S)$ be minimal systems. Then $X\perp_{Q} Y$ if and only if $X\perp_{QD} Y$, that is the product $X\times Y$ is the only joining of $X$ and $Y$ that projects onto $X_{dis}\times Y_{dis}$.
\end{thm}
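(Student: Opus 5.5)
One direction is immediate. Since $X_{eq}$ is a factor of $X_{dis}$ (an equicontinuous system is distal, and the maximal distal factor dominates every distal factor), any joining $J$ projecting onto $X_{dis}\times Y_{dis}$ also projects onto $X_{eq}\times Y_{eq}$. Hence $X\perp_Q Y$ implies $X\perp_{DQ} Y$.

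For the converse, assume $X\perp_{DQ} Y$ and let $J$ be a joining of $X$ and $Y$ with $(\pi_X\times\pi_Y)(J)=X_{eq}\times Y_{eq}$. I plan to show that $J':=(p_X\times p_Y)(J)$, the image of $J$ in $X_{dis}\times Y_{dis}$, is all of $X_{dis}\times Y_{dis}$. Then $X\perp_{DQ}Y$ forces $J=X\times Y$. This reduces the theorem to the following claim: \emph{distal minimal systems $D_1=X_{dis}$ and $D_2=Y_{dis}$ satisfy $D_1\perp_Q D_2$.} Note that $(D_i)_{eq}$ coincides with $X_{eq}$ (resp.\ $Y_{eq}$), since $X_{eq}$ is the maximal equicontinuous factor of $X_{dis}$. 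So $J'$ is a joining of $D_1$ and $D_2$ projecting onto $(D_1)_{eq}\times(D_2)_{eq}$, and the claim gives $J'=D_1\times D_2$.

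For the claim, minimal distal systems are \textbf{PI} (indeed \textbf{I}-extensions of the trivial system), so Theorem~\ref{c-PI} gives $D_1\perp_Q D_2$ directly. Alternatively, the preceding theorem shows distal systems are strongly quasi-disjoint from all minimal systems, which implies quasi-disjointness via Theorem~\ref{main1}, since a minimal fiber has a unique minimal subset.

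The only step needing care is checking that $(D_i)_{eq}$ really equals $X_{eq}$ and that the projections commute, i.e.\ $\pi_X=\pi_{D_1}\circ p_X$. This follows from the universal property of $X_{eq}$ together with the fact that $X_{dis}\to X_{eq}$ exists, and I expect no real obstacle there.
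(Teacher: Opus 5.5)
Your proposal is correct and follows the paper's proof essentially step for step. For the forward direction, you note that $X_{eq}$ is a factor of $X_{dis}$. For the converse, you push $J$ down to a joining of $X_{dis}$ and $Y_{dis}$, use $(X_{dis})_{eq}=X_{eq}$ and $(Y_{dis})_{eq}=Y_{eq}$, and invoke quasi-disjointness of distal (hence \textbf{PI}) systems from Theorem~\ref{main3}/Theorem~\ref{c-PI} before applying $X\perp_{DQ}Y$.
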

\begin{proof}
($X\perp_{Q} Y\Rightarrow X\perp_{DQ} Y$) Let $J$ be a joining of $X$ and $Y$ that projects onto $X_{dis}\times Y_{dis}$. Then it is clear that $J$ projects onto $X_{eq}\times Y_{eq}$. Since $X\perp_{Q} Y$, one has $J=X\times Y$. Thus $ X\perp_{DQ} Y$.

($X\perp_{DQ} Y\Rightarrow X\perp_{Q} Y$) Let $J$ be a joining of $X$ and $Y$ that projects onto $X_{eq}\times Y_{eq}$. Let $J_{dis}$ be the projection of $J$ to $X_{dis}\times Y_{dis}$. Clearly, $J_{dis}$ is a joining of $X_{dis}$ and $Y_{dis}$. On the other hand, $(X_{dis})_{eq}=X_{eq}$ and  $(Y_{dis})_{eq}=Y_{eq}$. By Theorem \ref{main3}, $X_{dis}\perp_{Q}Y_{dis}$. Since $J_{dis}$ projects onto $X_{eq}\times Y_{eq}$, one has $J_{dis}=X_{dis}\times Y_{dis}$.  Further, one has $J=X\times Y$ since $X\perp_{DQ} Y$.

\end{proof}



To end the paper we ask some open questions. We have shown that quasi-disjointness is preserved by proximal extensions (Lemma \ref{QD for prox ext}) and strong quasi-disjointness is preserved by almost one to one extensions (Lemma \ref{SQD for almost 1-1}). We think that strong quasi-disjointness is not preserved by proximal extensions.   We ask the following question.

\begin{ques} Let $(X,T)$ be a minimal system which is a non-trivial proximal extension of $X_{eq}$. Is there a such system such that the set of minimal points of $X\times X$ is not a residual subset of $X\times X$?
\end{ques}
We strongly believe that such a system exists. If it is the case then quasi-disjointness and strong quasi-disjointness are different, since
$X\perp_Q X$ by Theorem \ref{main1}, and at the same time $X\not \perp_{SQ} X$ by Theorem  \ref{twostate}.

\medskip
A related question is the following, where the notion of weakly mixing RIC extension one may refer to \cite{Aus88} or \cite{Glasner76}.
\begin{ques} Let $(X,T)$ be minimal and $\pi:X\ra X_{eq}$ be a non-trivial proximal or a weakly mixing RIC extension. 
Is it true that there is a residual set $\Omega\subset X$ such that for each $z\in \Omega$,
$\gamma^{-1}(z)=(\pi\times \pi)^{-1}(\overline{orb_{T\times T}(\pi(x), \pi(y))})$ is not minimal?
\end{ques}
We remark that the question has an affirmative answer when $y$ is in the orbit of $x$.

\medskip

In \cite{XY, HSY20}, the authors asked whether the collection of systems disjoint from all minimal systems has the product property. For quasi-disjointness, we also do not know whether the product property holds.
\begin{ques}
Is it true that $X_1\perp_{Q} Y, X_2\perp_Q Y$ and $X_1\times X_2$ minimal implies $X_1\times X_2\perp_{Q} Y$?
\end{ques}

In this paper we only consider the quasi-disjointness between minimal systems. We hope this can be generalized to transitive systems or general systems.

\begin{ques}
How to generalize the quasi-disjointness to transitive systems?
\end{ques}

\end{document}